 \newtheorem{thm}{Theorem}[subsection]
 \newtheorem{cor}[thm]{Corollary}
 \newtheorem{lem}[thm]{Lemma}
 \newtheorem{prop}[thm]{Proposition}
 \theoremstyle{definition}
 \newtheorem{defn}[thm]{Definition}
 \theoremstyle{definition}
 \newtheorem{example}[thm]{Example}
 \newtheorem{rem}[thm]{Remark}
\theoremstyle{definition}
 \DeclareMathOperator{\R}{\mathbb{R}}
\DeclareMathOperator{\D}{\mathcal{D}}
\DeclareMathOperator{\M}{\mathcal{M}}
 \DeclareMathOperator{\C}{\mathbb{C}}
 \DeclareMathOperator{\MAJ}{maj}
\DeclareMathOperator{\SP}{Sp}
\newcommand{\Y}{\mathbb{Y}}
\newcommand{\lm}{\lambda}
\newcommand{\Lm}{\Lambda}
\begin{document}

\title[A model for the deformed Plancherel process.]
 {\bf{A differential model for the deformation of the Plancherel
 growth process}}

\author{Eugene  Strahov}
\address{ Department of Mathematics, The Hebrew University of
Jerusalem, Givat Ram, Jerusalem
91904}\email{strahov@math.huji.ac.il}

\begin{abstract}

In the present paper  we construct and solve a differential model
for the $q$-analog of the Plancherel growth process. The
construction is based on a deformation of the Makrov-Krein
correspondence between continual diagrams and probability
distributions.

\end{abstract}

\maketitle
\section{Introduction}
The Plancherel growth of Young diagrams has been the subject of an
intensive research for many years (see, for example, the book by
Kerov \cite{kerov1}, and also the expository article by Vershik
\cite{vershik0} for a recent review). An important result in the
field  is the asymptotics of the shape of the Young diagram (called
the limit shape) in the course of the Plancherel growth process
(Logan and Shepp \cite{logan}, Vershik and Kerov \cite{vershik11}).
In \cite{kerov02} Kerov constructed a dynamical model for the
Plancherel growth process, and showed that the limit shape is a
fixed point of the Burgers equation: it attracts asymptotically all
solutions of the Burgers equation of a certain class. The
construction of the dynamical model is based on the correspondence
between continual diagrams and probability distributions, called by
Kerov the Markov-Krein correspondence.

The Plancherel measure admits a natural deformation as it follows
from the representation theory of the Iwahori-Hecke algebras. The
deformed Plancherel measure defines a stochastic process which is a
natural $q$-analog of the Plancherel growth.  The main goal of the
present paper is to construct and to solve a differential model for
this process. The method  is in a deformation of the Markov-Krein
correspondence, which results in a deformation  of the differential
equations responsible for the dynamic of continual diagrams. It is
shown in this paper that the deformed Burgers equation has a fixed
point, and it is proved that the fixed point attracts asymptotically
all relevant solutions.
\subsection{Background and remarks on the related works}
\subsubsection{The Plancherel growth process} Let $\Y$ denote the
Young graph, and let $\Y_n$ be the level of $\Y$ consisting of the
Young diagrams with $n$ boxes. Thus $\Y=\bigcup_{n=0}^{\infty}\Y_n$.
 By
definition, the Plancherel growth process is the Markov chain on
$\Y$ whose initial state is the empty diagram, and whose transition
probabilities $p(\lambda,\Lambda)$ are given by
$$
p(\lambda,\Lambda)=\frac{1}{|\Lambda|}\frac{\dim\Lambda}{\dim\lambda},
$$
if $\lambda$ is obtained from $\Lambda$ by removing one box, and by
$p(\lambda,\Lambda)=0$ otherwise. Here $\dim\lambda$ denotes the
number of standard Young diagrams of shape $\lambda$, and
$|\lambda|$ denotes the number of boxes in $\lambda$. It can be
shown (see, for example, Kerov \cite{kerov1}) that the distribution
$M_n$ of the state $\lambda\in\Y_n$ coincides with the Plancherel
measure on $\Y_n$, $M_n(\lambda)=\frac{\dim^2\lambda}{|\lambda|!}$.
If the Young diagrams on each level of $\Y$ are distributed
according to the Plancherel measure then the Plancherel growth
process is defined. It is known ( Vershik and Kerov
\cite{vershik11}, Logan and Shepp \cite{logan}) that in the course
of the Plancherel growth processes almost all Young diagrams with
the normalized area become uniformly  close to a common universal
curve. In a natural coordinate system this curve is given by
\begin{equation}\label{OOOOMMMMEEEGA}
\Omega(s)=\left\{%
\begin{array}{ll}
    \frac{2}{\pi}\left(s\arcsin\frac{s}{2}+\sqrt{4-s^2}\right), & \hbox{if} \;\; |s|\leq 2, \\
    |s|, & \hbox{if} \;\;|s|\geq 2. \\
\end{array}%
\right.
\end{equation}
\subsubsection{A formula for the transition probabilities}
Given $\lambda\in\Y$ define a piecewise linear function $\lambda(s)$
with slopes $\pm 1$ and local minima and maxima  at two interlacing
sequences of integer points
$$ x_1<y_1<x_2<\ldots<x_m<y_m<x_{m+1},$$
where the $x_i$'s are the local minima, and the $y_i$'s are  the
local maxima of $\lambda(s)$, see Figure 1.
\begin{center}
\begin{figure}[h]
\setlength{\unitlength}{4pt}
\begin{picture}(80,50)
\put(40,0){\vector(0,20){40}} \put(40,0){\line(1,1){40}}
\put(40,0){\line(-1,1){40}} \put(36,4){\line(1,1){20}}
\put(56,24){\line(1,-1){4}} \put(44,4){\line(-1,1){24}}
\put(48,8){\line(-1,1){16}} \put(52,12){\line(-1,1){12}}
\put(56,16){\line(-1,1){4}} \put(32,8){\line(1,1){12}}
\put(28,12){\line(1,1){12}} \put(24,16){\line(1,1){8}}
\put(20,20){\line(1,1){4}} \put(16,24){\line(1,1){4}}
\multiput(16,24)(0,-1){25}%
{.}
\put(13,1){$x_1$} \multiput (28,19)(0,-1){20}%
{.} \put(25,1){$x_2$}
 \multiput(20,27)(0,-1){28}%
 {.}
\put(19,1){$y_1$} \multiput(60,19)(0,-1){20}%
{.}
 \put(61,1){$x_{m+1}$}
\multiput(56,23)(0,-1){24}%
{.} \put(53,1){$y_{m}$} \put(-5,0){\vector(1,0){90}}
\end{picture}
\caption{Young diagrams as interlacing sequences}
\end{figure}
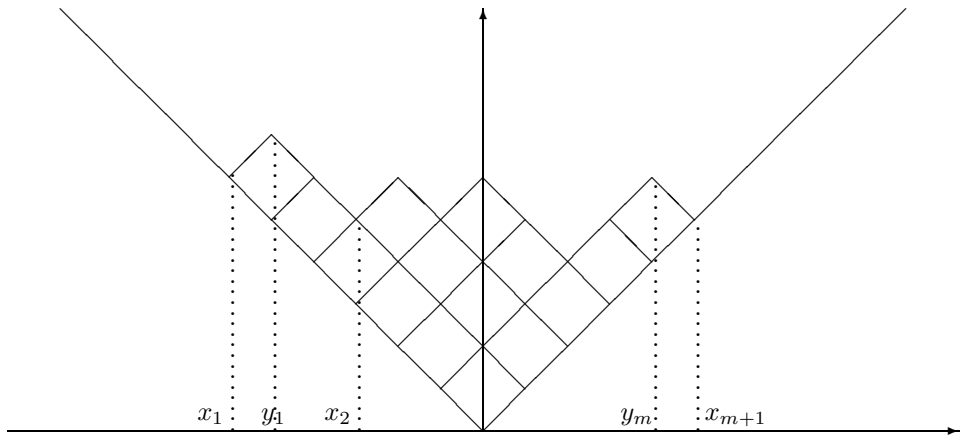
\end{center}
Write $\mu_k(\lambda)$ instead of $p(\lambda,\Lambda)$ if the box
that distinguishes $\Lambda$ from $\lambda$ is attached to the
minimum of the function $\lambda(s)$ with the coordinate $x_k$. Then
$\{\mu_k(\lambda)\}_{k=1}^{m+1}$ are precisely the coefficients of
the partial fraction expansion
\begin{equation}\label{YYTT}
\sum\limits_{k=1}^{m+1}\frac{\mu_k(\lambda)}{x-x_k}=\frac{\prod\limits_{i=1}^{m}(x-y_i)}{\prod\limits_{i=1}^{m+1}(x-x_i)},
\end{equation}
see Kerov \cite{kerov0,kerov01}.
 Formula \ref{YYTT} determines the one-to-one correspondence
between the set of Young diagrams, and the set of discrete
probability distributions.
\subsubsection{The Markov-Krein
correspondence.} Kerov showed in \cite{kerov0,kerov1} that the
correspondence between Young diagrams and discrete probability
distributions defined by equation (\ref{YYTT})  can be extended by
continuity. This extension  was called by Kerov the Markov-Krein
correspondence. More precisely, there is   a bijective
correspondence $\mu\longleftrightarrow w$ between the set of
probability measures on $\R$ with compact support, and the set of
continual diagrams (the definition of continual diagrams can be
found in section \ref{SECTIONCONTDIAGRAMS}). It is characterized by
the relation
$$
R_{\mu}(x)=R_{w}(x),
$$
where
$$
R_{\mu}(x):=\int\frac{\mu(ds)}{x-s},\;\;
R_{w}(x):=\frac{1}{x}\exp\left[-\int\frac{\sigma'(s)ds}{s-x}\right],
$$
 $x\in\C/I$, $I\subset\R$ stands for a sufficiently large
interval, and $\sigma(s)=\frac{1}{2}(w(s)-|s|)$. The function
$R_{w}(x)$ is called the $R$-function of the diagram $w$, and the
function $R_{\mu}(x)$ is called the $R$-function of the measure
$\mu$. If equation $R_{\mu}(x)=R_{w}(x)$ is satisfied the measure
$\mu$ is referred to as the transition distribution of the diagram
$w$.

Let $w$ be a continual diagram, and define the function $F(s)$ by
the formula
$$
F(s)=\frac{1}{2}\left(1+w'(s)\right).
$$
$F(s)$ can be regarded as the distribution function of a signed
measure $\tau$, and $\tau$ is referred to as the Rayleigh measure of
the continual diagram $w$. The Markov-Krein correspondence turns
into the relationship between a probability distribution $\mu$, and
a bounded signed measure $\tau$ on the real line satisfying the
identity
\begin{equation}\label{MarkovKreinIntr}
\int\frac{\mu(ds)}{z-s}=\exp\int\ln\frac{1}{z-s}\tau(ds).
\end{equation}
To see the relation with transition probabilities of the Plancherel
growth process assume that $\mu$ is the discrete distribution with
weights $\mu_k(\lambda)$ at the points $x_k$, where
$\left\{x_k\right\}_{k=1}^{m+1}$ are the local minima of the
function $\lambda(s)$, see Figure 1. Let $\tau$ be the signed
measure with the weights $+1$ at the points
$\left\{x_i\right\}_{i=1}^{m+1}$, and the weights $-1$ at the points
$\left\{y_i\right\}_{i=1}^{m}$. Then identity
(\ref{MarkovKreinIntr}) specializes to (\ref{YYTT}).

  Besides its
relevance to  the Plancherel growth process the Markov-Krein
correspondence plays a role in diverse topics in
analysis including\\
(1) the connection between additive  and multiplicative integral
 representations of analytic functions of negative imaginary
 type;\\
(2) the Markov moment problem;\\
(3) distributions of mean values of Dirichlet random measures;\\
(4) the theory of spectral shift function in the scattering
theory,\\
see the expository paper by Kerov  \cite{kerov03}, where a variety
of applications of the Markov-Krein correspondence are described.
Note also that more general versions of the Markov-Krein
correspondence were used in Kerov and Tsilevich \cite{kerov3} in
connection with the Dirichlet measures,  and in Vershik, Yor, and
Tsilevich \cite{vershik3}.

\subsubsection{Differential model for the Plancherel growth of
Young diagrams}\label{KGROWTMODEL} Kerov showed in \cite{kerov02}
that the limiting diagram $\Omega(s)$  is a fixed point of the
Burgers equation (equation (\ref{INTRBD1}) below), i.e. $\Omega(s)$
attracts asymptotically all solutions of this equation. The Burgers
equation naturally arises in the framework of the differential model
for the  growth of Young diagrams constructed in \cite{kerov02}. The
main assumptions behind this
model are:\\
1) The history of a growth of a continual diagram $w$ is described
by a curve $w(.,t), t_0<t<\infty$ in the space of continual
diagrams. The diagrams $w(.,t)$ are assumed to increase (with
respect to the inclusion of subgraphs)
with $t$.\\
2) The diagram $w(s,t)$ is required to grow in the direction of its
transition distribution $\mu_t$, which means ( see \cite{kerov02})
$$
\mu_t(ds)=\frac{\partial\sigma(s,t)}{\partial t}ds.
$$
The equation above is called the basic dynamic equation.
 The bijection between the continual diagrams
and the probability measures leads to different equivalent forms of
the basic dynamic equation:
\begin{equation}\label{INTRBD}
\int\frac{1}{x-s}\frac{\partial\sigma(s,t)}{\partial
t}ds=\frac{1}{x}\exp\left[-\int\frac{1}{x-s}\frac{\partial\sigma(s,t)}{\partial
s}ds\right]
\end{equation}
\begin{equation}
\frac{d}{dt}p_n(t)=(n+1)(n+2)h_n(t),\;\; n=0,1,\ldots .
\end{equation}
\begin{equation}\label{INTRBD1}
\frac{\partial R(x;t)}{\partial t}+R(x,t)\frac{\partial
R(x,t)}{\partial x}=0,
\end{equation}
where
$$
R(x,t)=\int\frac{\mu_{w(.,t)}(ds)}{x-s}=\int\frac{\partial}{\partial
t}\sigma(s,t)\frac{ds}{x-s},
$$
$p_n(t)$ are the moments of a diagram $w(s,t)$, and  $h_n(t)$ be the
moments of its transition distribution $\mu_{w(.,t)}(ds)$. The main
result in
\cite{kerov02} is the following \\
\begin{thm}\label{THEOREMA}
Assume that the function $\sigma(s,t)=\left(w(s,t)-|s|\right)/2$
satisfies  equation (\ref{INTRBD}) (which is equivalent to the basic
dynamic equation via the Krein correspondence, and to the Burgers
equation, equation (\ref{INTRBD1})). Then
$$
\underset{t\rightarrow\infty}{\lim}\frac{1}{\sqrt{t}}w(s\sqrt{t},t)=\Omega(s)
$$
uniformly in $s\in\R$, where $\Omega(s)$ is given by equation
(\ref{OOOOMMMMEEEGA}).
\end{thm}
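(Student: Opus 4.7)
The plan is to exploit the method of characteristics for the Burgers equation \eqref{INTRBD1} and then to identify the limit shape with the unique self-similar fixed point of the rescaled dynamics, finally transferring convergence of $R$-functions back to convergence of diagrams via the Markov-Krein correspondence.

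First I would write the general solution of \eqref{INTRBD1}: along each characteristic $x(t) = x_0 + t R_0(x_0)$ the value of $R$ is conserved, so $R(x,t) = R_0(x_0)$ where $x_0 = x_0(x,t)$ is determined implicitly and $R_0(x) := R(x,0)$ is the $R$-function of the initial transition distribution $\mu_0$. Because $\mu_0$ is a probability measure with compact support, $R_0$ admits the asymptotic expansion $R_0(x) = \frac{1}{x} + \frac{h_1(0)}{x^2} + O(x^{-3})$ as $|x|\to\infty$, and this universal leading term is what will dictate the limit.

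Next I would identify the candidate limit. The transition distribution of $\Omega$ is the Wigner semicircle on $[-2,2]$, whose $R$-function satisfies the algebraic relation $R_\Omega(x)^2 - x R_\Omega(x) + 1 = 0$, i.e.\ $R_\Omega(x) = \tfrac{1}{2}(x - \sqrt{x^2-4})$. A direct implicit differentiation shows that the self-similar ansatz $R^\ast(x,t) := t^{-1/2} R_\Omega(x/\sqrt{t})$, equivalently characterized by $t\,(R^\ast)^2 - x R^\ast + 1 = 0$, is an exact solution of \eqref{INTRBD1}; under the Markov-Krein correspondence this $R^\ast$ corresponds exactly to $\sqrt{t}\,\Omega(s/\sqrt{t})$, the profile appearing in the theorem.

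The heart of the argument is then to show that any solution converges to this fixed point after diffusive rescaling. Setting $y = x/\sqrt{t}$ and $z = x_0/\sqrt{t}$, the characteristic relation becomes $y = z + \sqrt{t}\,R_0(z\sqrt{t})$. Substituting the expansion of $R_0$ gives $y = z + 1/z + O(t^{-1})$ uniformly for $z$ bounded away from the rescaled support of $\mu_0$, and simultaneously $\sqrt{t}\,R(y\sqrt{t},t) = \sqrt{t}\,R_0(z\sqrt{t}) = 1/z + O(t^{-1})$. Passing to the limit $t\to\infty$, the implicit relation $y = z + 1/z$ forces $\sqrt{t}\,R(y\sqrt{t},t) \to R_\Omega(y)$, uniformly on compact subsets of $\{|y|>2\}$. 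Equivalently, the rescaled transition distribution $\mu_t$ converges weakly to the semicircle.

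Finally, I would use the Markov-Krein correspondence in its explicit form $R_w(x) = x^{-1}\exp\bigl[-\int \sigma'(s)\,ds/(s-x)\bigr]$ to translate convergence of the rescaled $R$-functions into convergence of the rescaled $\sigma$-functions, and hence into uniform convergence of $t^{-1/2} w(s\sqrt{t},t)$ to $\Omega(s)$ on $\mathbb{R}$. The principal technical obstacle I anticipate is twofold: (i) verifying that characteristics do not cross in the region from which the limit is extracted, so that the implicit solution of the Burgers equation remains single-valued, and (ii) upgrading the natural pointwise convergence on $\{|y|>2\}$ to uniform convergence in $s\in\R$ across the edge $|s|=2$ where $\Omega$ changes its analytic form to $|s|$; this last step requires an equicontinuity / monotonicity input about continual diagrams (all of them are $1$-Lipschitz and sandwich $|s|$), which in the end allows one to pass from weak convergence of the Rayleigh-type measures to uniform convergence of the diagrams themselves.
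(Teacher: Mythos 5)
Your proposal is essentially correct in outline, but it follows a genuinely different route from the one the paper takes; note first that the paper does not actually prove Theorem \ref{THEOREMA} --- it is quoted from Kerov \cite{kerov02} --- so the fair comparison is with the proof of its $q$-analog in the section on the asymptotics of the general solution. There the argument is a moment argument: the dynamic equation is converted into the infinite triangular ODE system (\ref{SHEST}) for the moments $p_n$, that system is solved explicitly (each $y_n$ is $e^{n\varsigma}$ times a polynomial of degree $n-1$ in $\varsigma$), the normalization $p_n(t=0)=1+o(t^{-1/2})$ washes out the initial diagram, and the method of characteristics is used only at the very end, on the \emph{stationary} self-similar equation (\ref{629}) in the proof of Theorem \ref{THEOREM5222}, to identify the limiting $R$-function. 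You instead apply characteristics directly to the time-dependent Burgers equation (\ref{INTRBD1}) with arbitrary initial data, writing $R=R_0(x-tR)$ and letting the universal leading term $R_0(x)\sim 1/x$ of the Stieltjes transform of a probability measure produce $y=z+1/z$ and $\sqrt{t}\,R\to 1/z=R_{\Omega}(y)$. Your route is shorter and makes the mechanism transparent (only $\int\mu_0=1$ survives the rescaling), and it is close in spirit to Kerov's original treatment; what the moment route buys is that it never has to address global validity of the characteristic representation for the time-dependent problem --- your acknowledged obstacle (i) is real, since $\partial x/\partial x_0=1+tR_0'(x_0)$ vanishes precisely at the scale $x_0\sim\sqrt{t}$, so one must check that the caustic stays inside $\{|y|\le 2\}$ and use that $R(\cdot,t)$ is a priori the Stieltjes transform of the probability measure $\mu_t$, hence single-valued and analytic outside a set of width $O(\sqrt{t})$ --- and it transfers verbatim to the $q$-deformation (\ref{iii}), where no closed-form complete integral of the time-dependent equation is available. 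Two harmless slips: the error terms in the rescaled expansion are $O(t^{-1/2})$ rather than $O(t^{-1})$, and the initial time is $t_0$ rather than $0$. Your final step (the $1$-Lipschitz equicontinuity and normalized-area tightness that upgrade convergence of the rescaled $R$-functions off $[-2,2]$ to uniform convergence of the diagrams on all of $\R$) is exactly the standard continuity of the Markov--Krein correspondence and is sound.
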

Theorem \ref{THEOREMA} means that all solutions of equation
(\ref{INTRBD}) have the common asymptotics as $t\rightarrow\infty$.
Therefore the differential model of the growth described above is a
continuous time deterministic process with the same asymptotic
behavior as the (random) Plancherel growth process.

Equation (\ref{INTRBD1}) is a quasi-linear differential equation
which describes the free motion of a one-dimensional medium of
noninteracting particles \cite{kerov02}. In terms of equation
(\ref{INTRBD1}) the curve $\Omega(s)$ corresponds to the automodel
solution
$$
R(x,t)=\frac{1}{\sqrt{t}}r(\frac{x}{\sqrt{t}}),
$$
where $r(x)$ satisfies the nonlinear differential equation
\begin{equation}\label{malrdif}
2rr'-xr'-r=0.
\end{equation}
The only solution of this equation vanishing at $x\rightarrow\infty$
is
$$
r(x)=\frac{1}{2}(x-\sqrt{x^2-4}).
$$
This is precisely the $R$-function of the diagram $\Omega(s)$.
\subsection{Statement of main results} We start with a natural
$q$-deformation of the Plancherel measure $M_q^{(n)}$ (equation
(\ref{-15})) which is originated from the representation theory of
the Iwahori-Hecke algebras (sections \ref{IwHSec}, \ref{IwHSecII}).
It is shown in section \ref{IwHSecIIIII} how $M_q^{(n)}$ is related
with non-uniform random permutations. $M_q^{(n)}$ defines a
$q$-analog of the Plancherel growth process, which is  a Markov
chain on the Young graph.  The transition probabilities of this
Markov chain can be described as follows. Suppose that the Young
diagrams $\lambda$ and $\Lambda$ are distinguished by one box
attached to the minimum of the function $\lambda(s)$ with the
coordinate $x_k$, see Figure 1. Then the probability of the
transition from $\lambda$ to $\Lambda$ is denoted by
$\mu_k(\lambda;q)$.  $\mu_k(\lambda;q)$ satisfies the equation
\begin{equation}\label{qqqqqqqqqqqqqqqqqqq}
\sum\limits_{k=1}^{m+1}\frac{\mu_k(\lambda;q)}{1-q^{x-x_k}}=\frac{\prod_{k=1}^m(1-q^{x-y_k})}{\prod_{k=1}^{m+1}(1-q^{x-x_k})}
\end{equation}
for sufficiently large values of the parameter $x$. Note that as $q$
approaches 1 equation (\ref{qqqqqqqqqqqqqqqqqqq}) is reduced to
equation (\ref{YYTT}) for the transition probabilities in the
Plancherel growth process. Therefore (\ref{qqqqqqqqqqqqqqqqqqq})
defines transition probabilities for a $q$-analog of the Plancherel
growth process.

The main goal of the present paper is to construct a differential
model for this growth process.  For this purpose we  introduce the
$q$-deformation of the Krein correspondence between continual
diagrams and probability measures with compact supports.
\subsubsection{The $q$-deformation of the Markov-Krein
correspondence} Let $0<q\leq 1$, and assume that a real variable $x$
takes values outside an interval $[a,b]$. Denote by $\D[a,b]$ the
set of continual diagrams with the property $w(s)=|s-s_0|$ for
$s\notin [a,b]$. In addition, denote by $\M[a,b]$ the space of
probability measures on the interval $[a,b]$. For $0<q<1$ the
$q$-deformation of the $R$-function of a continual diagram
$w\in\D[a,b]$ is defined by the expression
\begin{equation}
R_{w}(x;q)=\frac{1-q}{1-q^x}\exp\left[-\ln
q^{-1}\int\limits_a^b\frac{d\sigma(s)}{1-q^{x-s}}\right]
=\frac{1-q}{1-q^x}\exp\left[-\frac{1}{2}\ln
q^{-1}\int\limits_a^b\frac{d\left(w(s)-|s|\right)}{1-q^{x-s}}\right],
\nonumber
\end{equation}
and the $q$-deformation of the $R$-function of a measure
$\mu\in\M[a,b]$ is defined by the expression
\begin{displaymath}
R_{\mu}(x;q)=(1-q)\int\limits_a^b\frac{\mu(ds)}{1-q^{x-s}}.
\end{displaymath}
 For $q=1$ the
$q$-deformation of the  $R$-function of a diagram $w\in\D[a,b]$ is
defined to be  $R_{w}(x)$, and the $q$-deformation of the
$R$-function of a measure $\mu$ is defined to be $R_{\mu}(x)$.
\begin{thm}\label{RRRTTT}
Let $q$ be a fixed parameter which takes values in the interval
$(0,1]$. The relation $R_{\mu_q}(x;q)=R_{w(.;q)}(x;q)$ defines the
one-to-one correspondence $w_q\longleftrightarrow\mu_q$ between
continual diagrams from $\D[a,b]$, and the probability measures from
$\M[a,b]$.
\end{thm}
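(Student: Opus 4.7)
The plan is to reduce the statement to the classical Markov--Krein correspondence recalled in the introduction. For $q = 1$ there is nothing to prove, so I focus on $0 < q < 1$. The key observation is the factorization
\begin{equation*}
1 - q^{x-s} = \frac{u-z}{u}, \qquad z := q^x, \quad u := q^s,
\end{equation*}
which rewrites the $q$-Cauchy kernel $(1-q^{x-s})^{-1}$ as a $u$-weighted classical Cauchy kernel. The substitution $s \mapsto u = q^s$ is a diffeomorphism from $[a,b]$ onto $[q^b,q^a]$. Writing $\tilde\mu$ and $d\tilde\sigma$ for the pushforwards of $\mu \in \M[a,b]$ and of the Lebesgue--Stieltjes measure $d\sigma$, one then has
\begin{align*}
R_\mu(x;q) &= (1-q) \int_{q^b}^{q^a} \frac{u\, d\tilde\mu(u)}{u-z}, \\
R_w(x;q) &= \frac{1-q}{1-z} \exp\!\left[ -\ln q^{-1} \int_{q^b}^{q^a} \frac{u\, d\tilde\sigma(u)}{u-z}\right].
\end{align*}
Both sides of the relation $R_\mu(x;q) = R_w(x;q)$ are now purely classical Cauchy/Stieltjes expressions in the variable $z$.

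Uniqueness is then immediate: after the change of variable, $R_\mu(\cdot;q)$ is (up to the factor $1-q$) the Cauchy transform of the finite positive measure $u\, d\tilde\mu(u)$, and is therefore injective by Stieltjes inversion. For existence I would rewrite $R_w$ as a pure exponential of a logarithmic potential. Using the identity $-\ln(1-z) = \int \ln \frac{1}{z-u}\, d\delta_1(u) + \text{const}$ to absorb the prefactor $(1-z)^{-1}$ into the exponential, and combining with the pushed-forward Rayleigh measure of $w$, one produces a bounded signed measure $\eta$ of total mass $1$ for which
\begin{equation*}
R_w(x;q) = (1-q)\exp\!\left[\int \ln \tfrac{1}{z-u}\, d\eta(u)\right].
\end{equation*}
An application of the classical Markov--Krein correspondence to $\eta$ then furnishes a probability measure $\rho$ whose Cauchy transform equals the exponential factor; identifying $\rho = c \cdot u\, d\tilde\mu(u)$ and undoing the substitution delivers the desired bijection $w \leftrightarrow \mu$.

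The genuine work will be bookkeeping. One must check that $\eta$ really has total mass $1$ (so that the classical Markov--Krein applies), that the resulting $\rho$ factors as $c \cdot u\, d\tilde\mu(u)$ for a \emph{probability} measure $\tilde\mu$ supported in $[q^b,q^a]$, and that all constants match. The normalization $\int d\mu = 1$ should fall out of matching the asymptotics of $R_\mu(x;q)$ and $R_w(x;q)$ as $x \to \infty$ (equivalently $z \to 0^+$); the support condition should be controlled by the analytic structure of $R_w$ away from $[q^b,q^a]$, together with the behavior of $\tilde\sigma$ at the endpoints inherited from the condition $w(s) = |s - s_0|$ outside $[a,b]$ that defines $\D[a,b]$. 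The subtlest point will be verifying that the unit atom at $u=1$, introduced to absorb the prefactor $(1-z)^{-1}$, does not contaminate the support of the extracted measure $\tilde\mu$, i.e., that the apparent pole of $R_w(x;q)$ at $z = 1$ interacts with the exponential factor in precisely the way required for $R_w(x;q)$ to coincide with the Cauchy transform of a measure concentrated on $[q^b,q^a]$.
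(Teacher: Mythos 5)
Your reduction is correct in outline but takes a genuinely different route from the paper. The paper never transforms the kernel: it passes to the $q$-moments $h_n[\mu_q;q]=\int q^{-ns}\mu_q(ds)$ and $p_n[\tau_q;q]=\int q^{-ns}\tau_q(ds)$, shows (Proposition \ref{PROPOSITIONQQ}) that $R_{\mu_q}=R_{w(.;q)}$ is equivalent to these sequences being related like the generators $\textbf{h}_n$ and $\textbf{p}_n$ of $\Lambda$, gets uniqueness from the Hausdorff moment problem (after essentially the same exponential change of variables you use, but only for the moments), and delegates existence to Kerov's original argument via explicit partial fractions for rectangular diagrams, density of $\D_R[a,b]$ in $\D[a,b]$, and continuity. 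Your substitution $z=q^x$, $u=q^s$ instead imports the entire classical Markov--Krein theorem in one stroke, which is cleaner for the existence half and avoids the approximation argument; the price is the bookkeeping, which does close up: the constant $\exp\bigl[\int\ln u\,\eta(du)\bigr]$ produced by splitting $\ln\tfrac{u}{u-z}=\ln u+\ln\tfrac{1}{u-z}$ is exactly what is needed, because evaluating the classical identity at $z=0$ (your $x\to+\infty$ normalization) returns precisely $\int u^{-1}\rho(du)=\exp\bigl[-\int\ln u\,\eta(du)\bigr]$, and the unit atom at $u=1$ is nothing but the image of the $\delta_0$ summand of the Rayleigh measure $\tau=d(\sigma')+\delta_0$, sitting inside $[q^b,q^a]$ since $a<0<b$, so it cannot contaminate anything. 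The one step you must make explicit is that the classical correspondence is a bijection with \emph{Rayleigh} measures, not with arbitrary signed measures of total mass $1$: for a general $\eta$ of mass $1$ the exponential need not be the Cauchy transform of a positive measure. Here it works because the pushforward of the Rayleigh measure of $w$ under the decreasing map $s\mapsto q^s$ has distribution function $u\mapsto 1-F(\log_q u)$ with $F=\tfrac{1}{2}(1+w')$, which again takes values in $[0,1]$, so $\eta$ is the Rayleigh measure of a continual diagram supported in $[q^b,q^a]$ and the classical theorem applies in both directions; with that observation (and the corresponding pull-back in the converse direction) your argument is complete.
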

If equation $R_{\mu_q}(x;q)=R_{w(.;q)}(x;q)$ is satisfied, then
$\mu_q$ is referred to as the $q$-transition measure of the diagram
$w(.;q)$. An equivalent form of theorem \ref{RRRTTT} is
 \begin{thm}
There is a relationship between a probability measure $\mu_q$ on
$[a,b]$ and a Rayleigh measure $\tau_q$ on $[a,b]$ defined by the
identity
\begin{equation}\label{qMark}
\int\limits_a^b\frac{\mu_q(ds)}{1-q^{x-s}}=\exp\left[\int\limits_a^b\ln\left(\frac{1}{1-q^{x-s}}\right)\tau_q(ds)\right]
\end{equation}
The probability measure $\mu_q$ and the Rayleigh measure $\tau_q$
determine each other uniquely.
\end{thm}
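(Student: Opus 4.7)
I view this as a reformulation of Theorem~\ref{RRRTTT}, obtained by trading the continual diagram $w=w(\cdot;q)$ for its Rayleigh measure $\tau_q$, whose distribution function is $F_w(s)=(1+w'(s))/2$. The map $w\mapsto\tau_q$ is a bijection from $\D[a,b]$ onto the corresponding space of Rayleigh measures on $[a,b]$: the inverse reconstructs $w$ from $F_w$ by integration, with the constants of integration pinned down by $w(s)=|s-s_0|$ outside $[a,b]$. Composing this bijection with the bijection $w\leftrightarrow\mu_q$ of Theorem~\ref{RRRTTT} already produces the bijection $\mu_q\leftrightarrow\tau_q$, so the only thing remaining is to show that the analytic identity~(\ref{qMark}) is equivalent, under this change of variables, to $R_{\mu_q}(x;q)=R_{w(\cdot;q)}(x;q)$.

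The matching is the crucial step, and I would do it by Stieltjes integration by parts. Writing $\sigma'(s)=F_w(s)-\mathbf{1}_{\{s>0\}}$ and using the elementary identity
\begin{equation*}
\frac{\ln q^{-1}}{1-q^{x-s}}=\ln q^{-1}+\frac{d}{ds}\bigl[-\ln(1-q^{x-s})\bigr],
\end{equation*}
and then integrating by parts --- the boundary terms at $a,b$ vanish because $\sigma$ does there and $F_w$ equals $0$ at $a$ and $1$ at $b$, while the jump of $\sigma'$ at the kink of $|s|$ at $s=0$ generates an extra contribution $-\ln(1-q^x)$ --- transforms the exponent in the definition of $R_{w(\cdot;q)}(x;q)$ into
\begin{equation*}
-\ln q^{-1}\int_a^b\frac{d\sigma(s)}{1-q^{x-s}}=\ln(1-q^x)+\int_a^b\ln\frac{1}{1-q^{x-s}}\,\tau_q(ds).
\end{equation*}
Exponentiating and multiplying by the prefactor $(1-q)/(1-q^x)$ converts $R_{w(\cdot;q)}(x;q)$ into $(1-q)\exp\bigl[\int_a^b\ln(1/(1-q^{x-s}))\tau_q(ds)\bigr]$. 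Equating this with $R_{\mu_q}(x;q)=(1-q)\int_a^b\mu_q(ds)/(1-q^{x-s})$ from the definition and cancelling $(1-q)$ yields exactly~(\ref{qMark}); uniqueness of $\tau_q$ given $\mu_q$ (and vice versa) then follows at once from Theorem~\ref{RRRTTT} composed with $w\leftrightarrow\tau_q$.

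The main technical obstacle is regularity: a continual diagram is only Lipschitz, so $F_w$ is merely a bounded function of bounded variation and $\tau_q$ is a signed Borel measure without an $L^1$ density, so the integration by parts has to be taken in the Lebesgue--Stieltjes sense. I would address this either by appealing directly to the Lebesgue--Stieltjes integration by parts formula, which is valid for BV integrands paired with $C^1$ test functions such as $L(s)=-\ln(1-q^{x-s})$, or else by first establishing the identity for piecewise linear diagrams --- where it reduces to a finite algebraic manipulation involving the partial fraction expansion of the right-hand side of~(\ref{qqqqqqqqqqqqqqqqqqq}) --- and then passing to the weak-$*$ limit, using that both sides of~(\ref{qMark}) are continuous in $\tau_q$ for $x$ staying away from $[a,b]$.
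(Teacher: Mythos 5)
Your integration-by-parts computation is correct and is exactly the step the paper has in mind (the paper's proof merely says the identity ``can be obtained from equation (\ref{qcorrespondence}) with the integration by parts''; you supply the details, correctly, including the $\ln(1-q^x)$ term coming from the kink of $|s|$ at $s=0$ which cancels the prefactor $(1-q^x)^{-1}$). The problem is your uniqueness argument. You derive the bijection $\mu_q\leftrightarrow\tau_q$ by composing the bijection $w\leftrightarrow\tau_q$ with the bijection $w\leftrightarrow\mu_q$ of Theorem~\ref{RRRTTT}. But in the body of the paper the logical order is the reverse: Theorem~\ref{RRRTTT} for $q\in(0,1)$ is \emph{deduced from} the present theorem (together with the fact that a diagram is recovered from its Rayleigh measure). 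The introduction's phrasing ``an equivalent form of Theorem~\ref{RRRTTT}'' invites your reading, but taken as a proof your argument is circular: no independent proof of Theorem~\ref{RRRTTT} for $q<1$ is available at this point, so you have not actually established that $\mu_q$ and $\tau_q$ determine each other.

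The missing ingredient is the paper's self-contained moment argument. One shows that the $q$-moments $h_n[\mu_q;q]=\int_a^b q^{-ns}\mu_q(ds)$ determine $\mu_q$ uniquely (by the substitution $u=q^{-s}$ this is the uniqueness in the Hausdorff moment problem on a compact interval), and likewise that the $q$-moments $p_n[\tau_q;q]=\int_a^b q^{-ns}\tau_q(ds)$ determine the Rayleigh measure $\tau_q$ uniquely. Proposition~\ref{PROPOSITIONQQ} then shows that the identity (\ref{qMark}) is equivalent to the statement that the sequences $\{h_n[\mu_q;q]\}$ and $\{p_n[\tau_q;q]\}$ are related as the generator systems $\{\textbf{h}_n\}$ and $\{\textbf{p}_n\}$ of the algebra $\Lambda$ of symmetric functions, hence (via Newton's identities) determine each other. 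Combining these three facts gives the uniqueness claim without any appeal to Theorem~\ref{RRRTTT}. If you prefer your route, you would first have to prove Theorem~\ref{RRRTTT} for $q\in(0,1)$ independently, which the paper does not do and which is not obviously easier.
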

Equation (\ref{qMark}) can be considered as the $q$-deformation of
the Markov-Krein correspondence (\ref{MarkovKreinIntr}).
 Let us emphasize that in the equality
$R_{\mu_q}(x;q)=R_{w(.;q)}(x;q)$ both the diagram, $w(.;q)$, and the
measure, $\mu_q$,  generally depend on the parameter $q$. Assume
that the interval $[a,b]$ is chosen to be large enough, and that
$\mu_q$ is the discrete distribution with weights $\mu_k(\lambda;q)$
at the points $x_k$, where $\{x_k\}_{k=1}^{m+1}$ are the local
minima of the function $\lambda(s)$, see Figure 1. Let $\tau_q$ be
the signed measure with the weights $+1$ at the points
$\{x_i\}_{i=1}^{m+1}$, and the weights $-1$ at the points
$\{y_i\}_{i=1}^{m}$. Then equation (\ref{qMark}) specializes to
(\ref{qqqqqqqqqqqqqqqqqqq}). In this work we apply  (\ref{qMark}) to
derive the differential model for the $q$-analog of the Plancherel
growth process. It is of interest to investigate the role of
(\ref{qMark}) in other topics of analysis, but we leave this issue
for the future research.
\subsubsection{The differential model for the $q$-analog of the
Plancherel growth process} We start from the same assumptions as in
the Kerov growth model, see section \ref{KGROWTMODEL}. Thus the
history of the growth of a continual diagram $w(.;q)$ is described
by a curve $w(.,t;q)$, $t_0<t<\infty$, the diagram $w(.,t;q)$ is
assumed to increase, and to grow in the direction of its
$q$-transition distribution $\mu_{t,q}$. Introduce the functions
$\left\{h_n[\mu_{t,q};q]\right\}_{n=1}^{\infty}$
\begin{equation}
h_n[\mu_{t,q};q]=\int\limits_a^bq^{-ns}\mu_{t,q}(ds), \nonumber
\end{equation}
and the functions $\left\{p_n[w(.,t;q);q]\right\}_{n=1}^{\infty}$
\begin{equation}
p_n[w(.,t;q);q]=-n\ln
q^{-1}\int\limits_a^bq^{-ns}\;\frac{\partial\sigma(s,t;q)}{\partial
s}\;ds+1. \nonumber
\end{equation}
The theorem below gives the analog of the dynamic equations
(\ref{INTRBD})-(\ref{INTRBD1}):
\begin{thm}\label{THHHH}
 The following dynamic equations
are equivalent
\begin{equation}\label{EEQ}
\int\limits_a^b\left(1-q^{x-s}\right)^{-1}\frac{\partial\sigma(s,t;q)}{\partial
t}ds=\left(1-q^x\right)^{-1}\exp\left[-\ln
q^{-1}\int\limits_a^b\left(1-q^{x-s}\right)^{-1}\frac{\partial\sigma(s,t;q)}{\partial
s}ds\right];
\end{equation}
\begin{equation}
\frac{\partial}{\partial t}p_n\left[w(.,t;q);q\right]
=n^2\ln^2q^{-1}\sum\limits_{|\lambda|=n}\prod\limits_{k=1}^{m(\lambda)}\frac{p_k^{r_k}[w(.,t;q);q]}{k^{r_k}r_k!},
\end{equation}
where $n=1,2,\ldots ,$ $\lambda=\left(1^{r_1},2^{r_2},\ldots
,m^{r_m}\right)$, and $m=m(\lambda)$;
\begin{equation}\label{qRRRQ}
\frac{\partial R_{w(.,t;q)}(x;q)}{\partial x}+\frac{1-q}{\ln
q^{-1}}R_{w(.,t;q)}^{-1}(x;q)\frac{\partial R_{w(.,t;q)}(x;q)
}{\partial t}=0.
\end{equation}
\end{thm}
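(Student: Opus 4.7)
The plan is to derive all three forms from the single underlying input, namely the basic dynamic equation $\mu_{t,q}(ds)=(\partial\sigma(s,t;q)/\partial t)\,ds$ together with the $q$-Markov--Krein correspondence $R_{\mu_{t,q}}(x;q)=R_{w(.,t;q)}(x;q)$ established in Theorem \ref{RRRTTT}. Equation (\ref{EEQ}) is then obtained by simply writing out this identity: substitute $\mu_{t,q}(ds)=\partial_t\sigma\,ds$ into the integral definition of $R_{\mu_{t,q}}(x;q)$, divide both sides by $(1-q)$, and use the exponential formula for $R_{w(.,t;q)}(x;q)$. So the first equivalence is tautological given Theorem \ref{RRRTTT}.

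To derive the $q$-Burgers form (\ref{qRRRQ}) I would compute the two derivatives of $R=R_{w(.,t;q)}(x;q)$ from the two different representations of $R$. From the integral representation $R=(1-q)\int_a^b(1-q^{x-s})^{-1}\partial_t\sigma\,ds$ one gets
\[
\frac{\partial R}{\partial x}=-(1-q)\ln q^{-1}\int_a^b \frac{q^{x-s}\partial_t\sigma(s,t;q)}{(1-q^{x-s})^2}\,ds.
\]
From the exponential representation of $R_{w(.,t;q)}(x;q)$ one gets
\[
\frac{\partial R}{\partial t}=R\cdot(-\ln q^{-1})\int_a^b \frac{\partial_s\partial_t\sigma(s,t;q)}{1-q^{x-s}}\,ds,
\]
and then I would integrate by parts in $s$; the boundary terms vanish because $\sigma(s,t;q)$ is constant (hence $\partial_t\sigma=0$) outside $[a,b]$. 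Recognising $\partial_s(1-q^{x-s})^{-1}=q^{x-s}\ln q^{-1}/(1-q^{x-s})^2$, the two formulas involve the same integral, so eliminating it yields $\partial_t R=-R\,(\ln q^{-1})/(1-q)\cdot\partial_x R$, which is (\ref{qRRRQ}).

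For the moment system I would expand $1/(1-q^{x-s})=\sum_{k\ge 0}q^{k(x-s)}$ in both sides of the $q$-MK identity and set $u:=q^x$. Using $h_0=1$ and the definition of $h_n$, the left side becomes $(1-q)\sum_{k\ge 0}h_n u^k$; on the right, the exponent becomes $\sum_{k\ge 1}(p_k-1)u^k/k$ (using the definition of $p_n$ and, where needed, a vanishing boundary term). Combining the factor $1/(1-q^x)=\sum u^k$ on the right into the exponential via $\log(1-u)^{-1}=\sum_{k\ge 1}u^k/k$ collapses the $-1$'s and yields the clean symmetric-function identity
\[
\sum_{n\ge 0}h_n u^n=\exp\Bigl[\sum_{k\ge 1}\frac{p_k}{k}u^k\Bigr],
\]
whose coefficient-by-coefficient expansion is precisely $h_n=\sum_{|\lambda|=n}\prod_{k=1}^{m(\lambda)}p_k^{r_k}/(k^{r_k}r_k!)$. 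Finally, differentiating the definition of $p_n$ in $t$ and integrating by parts once in $s$ (again using the vanishing of $\partial_t\sigma$ at the endpoints) gives $\partial_t p_n=n^2\ln^2 q^{-1}\cdot h_n$, which combined with the preceding expansion yields the stated moment equation.

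The only non-routine step is the sign and factor bookkeeping in Part 2 together with the justification that all boundary terms in the integration-by-parts manipulations vanish; once one commits to the convention that $w(.,t;q)\in\D[a,b]$ with $\sigma$ continuous and constant outside $[a,b]$, so that $\partial_t\sigma|_{s=a,b}=0$, every step is a direct calculation using $\partial_x q^{x-s}=-\partial_s q^{x-s}=q^{x-s}\ln q^{-1}$.
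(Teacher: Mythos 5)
Your proposal is correct and follows essentially the same route as the paper: both rest on reading (\ref{EEQ}) as $R_{\mu_{t,q}}=R_{w(.,t;q)}$ with $\mu_{t,q}(ds)=\partial_t\sigma\,ds$, on the Newton-identity relation between the $h_n$ and $p_n$ moments, and on the key integration-by-parts identity $\partial_t p_n=n^2\ln^2q^{-1}\,h_n$. The only cosmetic difference is that you derive the $q$-Burgers form by differentiating the two integral/exponential representations directly and eliminating the common integral, whereas the paper performs the identical computation term by term in the power series in $q^{nx}$.
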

Partial differential equation (\ref{qRRRQ}) can be understood as a
$q$-analog of the Burgers equation (\ref{INTRBD1}). The main
difference between (\ref{qRRRQ}) and (\ref{INTRBD1}) is that
$R_{w(.,t;q)}(x;q)$ is a function of three variables: $x$, $t$, and
$q$. We remark that the crucial observation behind theorem
(\ref{THHHH}) is that the functions
$\left\{h_n[\mu_{t,q};q]\right\}_{n=1}^{\infty}$ and
$\left\{p_n[w(.,t;q);q]\right\}_{n=1}^{\infty}$ are related to each
other as the generators of the algebra $\Lambda$ of the symmetric
functions, $\{\textbf{h}_n\}_{n=1}^{\infty}$, and
$\{\textbf{p}_n\}_{n=1}^{\infty}$.
\subsubsection{The description of the limiting diagram.}
\begin{thm}
Let $q$ be a fixed parameter which takes values from the open
interval $(0,1)$. Assume that $w(s,t;q)$ is a solution of the
equivalent dynamic equations  (\ref{EEQ})-(\ref{qRRRQ}).\\
\textbf{Claim 1.} There exists a limiting continual diagram
$\Omega(s;q)$ such that
$$
\underset{t\rightarrow\infty}{\lim}\frac{1}{\sqrt{t}}\;w\left(s\sqrt{t},t;q^{\frac{1}{\sqrt{t}}}\right)=\Omega(s;q)
$$
uniformly in $s\in\R$ and $q\in(0,1)$.
\\
\textbf{Claim 2.} The limiting diagram is uniquely determined by the
function $R_{\Omega(.;q)}(x;q)$ defined by
$$
R_{\Omega(.;q)}(x;q)=\left(1-q^x\right)^{-1}\exp\left[-\frac{1}{2}\ln
q^{-1}\int\limits_a^b\frac{d\left(\Omega(s;q)-|s|\right)}{1-q^{x-s}
}ds\right],
$$
and this function, $R_{\Omega(.;q)}(x;q)$, is the solution of the
equation
\begin{equation}\label{FINALSOLUTIONOMEGA}
r(1-q^{x-\frac{\ln q^{-1}}{1-q}r})=1-q.
\end{equation}
\textbf{Claim 3.} Let $\tau^{\Omega(.;q)}$ be the Rayleigh measure
of the limiting diagram $\Omega(s;q)$. For $0<q<1$  the moments
$\left\{p_n[\Omega(.;q);q]\right\}_{n=1}^{\infty}$ of
$\tau^{\Omega(.;q)}$ defined by
$$
p_n[\Omega(.;q);q]=\int\limits_{a}^bq^{-ns}\tau_q^{\Omega(.;q)}(ds)
$$
can be expressed in terms of the solution $\{y_n\}_{n=1}^{\infty}$
of the system of differential equations
\begin{equation}
\frac{dy_n(\varsigma)}{d\varsigma}
=n^2\left\{\sum\limits_{|\lambda|=n}\prod\limits_{k=1}^{m(\lambda)}\frac{y_k^{r_k}(\varsigma)}{k^{r_k}r_k!}\right\},\;\;
n=1,2,\ldots \nonumber
\end{equation}
(where $n=1,2,\ldots $, $\lambda=\left(1^{r_1},2^{r_2},\ldots,
m^{r_m}\right)$, and  $m=m(\lambda)$) are subjected to the initial
conditions $y_n(0)=1,\;n=1,2,\ldots$. Namely,
$$
p_n[\Omega(.;q);q][q]=y_n[\ln^2q],\;\;n=1,2,\ldots.
$$
\end{thm}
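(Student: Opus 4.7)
The plan is to reduce all three claims to a study of the characteristics of the $q$-Burgers equation~(\ref{qRRRQ}), combined with the self-similar scaling $x=\xi\sqrt t$, $\tilde q=q^{1/\sqrt t}$, under which the effective characteristic speed $\ln\tilde q^{-1}/(1-\tilde q)$ remains bounded and the product $\sqrt t\,\ln\tilde q^{-1}=\ln q^{-1}$ stays constant as $t\to\infty$.

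For Claims~1 and~2, first rewrite~(\ref{qRRRQ}) as $R_t+\frac{\ln q^{-1}}{1-q}RR_x=0$, whose characteristics are the straight lines $x=x_0+\frac{\ln q^{-1}}{1-q}R_0\,t$ along which $R$ takes the constant value $R_0=R_{w(.,0;q)}(x_0;q)$. This gives the implicit characteristic relation
\begin{equation*}
R_{w(.,t;q)}(x;q)=R_{w(.,0;q)}\Bigl(x-\tfrac{\ln q^{-1}}{1-q}R_{w(.,t;q)}(x;q)\,t;\,q\Bigr).
\end{equation*}
Next, substitute the scaling $x=\xi\sqrt t$, $\tilde q=q^{1/\sqrt t}$: a direct calculation using the change of variable $s\mapsto s\sqrt t$ in the integrals defining $R_w(x;q)$ shows that
\begin{equation*}
\sqrt t\,R_{w(.,t;\tilde q)}(\xi\sqrt t;\tilde q)\longrightarrow\ln q^{-1}\cdot R_{\Omega(.;q)}(\xi;q)
\end{equation*}
whenever the rescaled diagram converges to $\Omega(.;q)$. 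Plugging this limit into the characteristic relation and using $\sqrt t(1-\tilde q)\to\ln q^{-1}$ and $\ln\tilde q^{-1}/(1-\tilde q)\to 1$ yields, after rearrangement, the algebraic equation~(\ref{FINALSOLUTIONOMEGA}). The universality (independence of the initial datum) and the uniqueness of the correct branch of~(\ref{FINALSOLUTIONOMEGA}) together furnish Claim~1, following the template of Kerov's classical argument in~\cite{kerov02}.

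For Claim~3, the key observation is that the second of the dynamic equations in Theorem~\ref{THHHH} becomes the autonomous system
\begin{equation*}
\frac{dy_n}{d\varsigma}=n^2\sum_{|\lambda|=n}\prod_{k=1}^{m(\lambda)}\frac{y_k^{r_k}}{k^{r_k}r_k!},\qquad y_n=p_n[w(.,t;q);q],
\end{equation*}
after the time change $\varsigma=t\ln^2 q^{-1}$. The initial condition $y_n(0)=1$ is forced by the trivial starting diagram $w(s,0;q)=|s|$, for which $\sigma\equiv 0$ and hence $p_n[w(.,0;q);q]=1$. Consequently, for each fixed $\tilde q$ one has $p_n[w(.,t;\tilde q);\tilde q]=y_n(t\ln^2\tilde q^{-1})$. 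A brief substitution proves the scaling invariance $p_n[W;q]=p_n[w(.,t;\tilde q);\tilde q]$ for the rescaled diagram $W(s)=t^{-1/2}w(s\sqrt t,t;\tilde q)$, and setting $\tilde q=q^{1/\sqrt t}$ gives $t\ln^2\tilde q^{-1}=\ln^2 q^{-1}=\ln^2 q$ independently of $t$. Passage to the limit $t\to\infty$ via Claim~1 then identifies the $p_n$-moments of the Rayleigh measure of $\Omega(.;q)$ with $y_n(\ln^2 q)$.

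The chief obstacle I anticipate is Claim~1: securing existence, universality, and uniform convergence in $s\in\R$ and $q\in(0,1)$ of the limiting diagram. The additional $q$-dependence of the characteristic coefficient $\ln\tilde q^{-1}/(1-\tilde q)$ and the behavior of the prefactor $(1-\tilde q)/(1-\tilde q^x)$ near $x=0$ must be controlled carefully, and selecting the correct branch of~(\ref{FINALSOLUTIONOMEGA}) requires a monotonicity analysis of the implicit equation. Once those technicalities are in place, Claims~2 and~3 follow from the identifications above.
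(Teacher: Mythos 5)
Your reduction of Claims 2 and 3 to a characteristic relation and to the moment system is in the right spirit, but there is a genuine gap at the heart of Claim 1, and your proposal defers it rather than closing it. Writing the characteristics of $R_t+\frac{\ln q^{-1}}{1-q}RR_x=0$ and then passing to the limit ``whenever the rescaled diagram converges to $\Omega(.;q)$'' only identifies the limit equation \emph{conditionally on convergence}; existence, universality over initial data, and uniformity in $s$ and $q$ are precisely what Claim 1 asserts. The paper's mechanism for this step is the essential missing idea: it converts the dynamics into the autonomous moment system (\ref{ii}), solves it explicitly (each $y_n(\varsigma)$ is a polynomial in $\varsigma$ of degree $n-1$ times $e^{n\varsigma}$ whose coefficients are polynomials in the initial moments), and observes that the rescaling $q\mapsto q^{1/\sqrt{t}}$ simultaneously freezes the effective time $\varsigma=t\ln^2 q^{-1/\sqrt{t}}=\ln^2 q$ and forces every rescaled initial moment $p_n\bigl[w(.,0;q^{1/\sqrt{t}});q^{1/\sqrt{t}}\bigr]\to 1$, because the prefactor $n\ln q^{-1/\sqrt{t}}\to 0$ in (\ref{531}). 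This yields convergence of all moments to the universal values $y_n(\ln^2 q)$ for \emph{arbitrary} initial data, from which uniform convergence of the diagrams follows; the limit is then identified with $\Omega(.;q)$ by showing that the limiting $R$-function satisfies the stationary PDE in the variables $u=x/\sqrt{t}$, $Q=q^{\sqrt{t}}$, which the paper solves by characteristics in $(u,\ln Q^{-1})$ rather than in $(x,t)$.

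Two further points. First, your justification of the initial condition $y_n(0)=1$ in Claim 3 by assuming the trivial starting diagram $w(s,0;q)=|s|$ restricts the statement to one particular solution, whereas the theorem is asserted for every solution of (\ref{EEQ})--(\ref{qRRRQ}); the correct source of $y_n(0)=1$ is the limit of the rescaled initial moments just described, not a choice of initial datum. Second, a normalization slip: from $R_{w(.,t;\tilde q)}(x;\tilde q)=\frac{1-\tilde q}{1-q}\,R_{W(.,t;q)}(x/\sqrt{t};q)$ with $\tilde q=q^{1/\sqrt{t}}$ one gets $\sqrt{t}\,R_{w(.,t;\tilde q)}(\xi\sqrt t;\tilde q)\to\frac{\ln q^{-1}}{1-q}R_{\Omega(.;q)}(\xi;q)$, not $\ln q^{-1}\cdot R_{\Omega(.;q)}(\xi;q)$; the missing factor $(1-q)^{-1}$ would propagate into your derivation of (\ref{FINALSOLUTIONOMEGA}).
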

The theorem does not provide an explicit form for the limiting curve
$\Omega(s;q)$, but it determines the function
$R_{\Omega(s;q)}(x;q)$, and the moments of the curve $\Omega(s;q)$
explicitly. In terms of equation (\ref{qRRRQ}) the curve
$\Omega(s;q)$ corresponds to the $q$-auto-model solution
$$
R_{w(.,t;q)}(x;q)=\frac{1-q}{1-q^{\sqrt{t}}}r(\frac{x}{\sqrt{t}};q^{\sqrt{t}}),
$$
where $r(x;q)$ satisfies the  partial differential equation
\begin{equation}\label{difmalq}
2r\frac{\partial}{\partial x}r-\frac{1-q}{\ln
q^{-1}}x\frac{\partial}{\partial
x}r-qr-q(1-q)\frac{\partial}{\partial q}r=0.
\end{equation}
Note that if we take $q=1$ in (\ref{difmalq}), then (\ref{difmalq})
is reduced to (\ref{malrdif}). Equation (\ref{difmalq}) can be
reduced to a quasi-linear partial differential equation which can be
solved by the method of characteristics.  The result is that the
solution of (\ref{difmalq}) is uniquely determined by equation
(\ref{FINALSOLUTIONOMEGA}). Clearly,  the solution of
(\ref{FINALSOLUTIONOMEGA}) can be understood as the $q$-deformation
of the $R$-function corresponding to the diagram $\Omega(s)$ defined
by equation (\ref{OOOOMMMMEEEGA}). Indeed, if $q$ approaches $1$ in
equation (\ref{FINALSOLUTIONOMEGA}), then this equation turns into
$r(x-r)=1$. The only solution of this equation vanishing at
$x\rightarrow+\infty$ coincides with the $R$-function of the diagram
$\Omega(s)$.
\section{A deformation of the Plancherel measure}
\subsection{Iwahori-Hecke algebras}\label{IwHSec}
This section recalls few facts on the Iwahori-Hecke algebras
associated with the finite Coxeter groups. The general references on
the Hecke algebras are Curtis and Reiner \cite{curtis}, $\S$67-68,
Carter \cite{carter}, $\S$10.8-10.11. We follow the presentation in
the paper by Diaconis and Ram \cite{diaconis}, which contains the
necessary representation theoretic background (sections 3 and 7).

Let $W$ be a finite Coxeter group generated by simple reflections
$s_1,\ldots, s_n$. A choice of reflection generators gives rise to a
length function $l$ on a Coxeter group. $l$ is defined as the
minimum number of the reflection generators required to express a
group element. Thus the length function $l(w)$ is the smallest $k$
such that $w=s_{i_1}s_{i_2}\ldots s_{i_k}$. The length function has
the following properties: $l(id)=0,\; l(s_i)=1,$ and
$l(s_iw)=l(w)\pm 1$ for each $w\in W$, $1\leq i\leq n$. Let $q$ be a
parameter which takes values in the interval $(0,1)$.
\begin{defn}
The \textit{Iwahori-Hecke algebra} $H$ corresponding to $W$ is the
vector space with the  basis $\left\{T_w| w\in W\right\}$ and the
multiplication given by
\begin{equation}
T_iT_w=\left\{
         \begin{array}{ll}
           T_{s_iw}, & \hbox{if}\;\; l(s_iw)=l(w)+1, \\
           (q-1)T_{w}+qT_{s_iw}, & \hbox{if}\;\; l(s_iw)=l(w)-1.
         \end{array}
       \right.
\end{equation}
where $T_i=T_{s_i}$ for $1\leq i<n$.
\end{defn}
The irreducible representations of the Iwahori-Hecke algebra $H$ are
in one-to-one correspondence with the irreducible representations of
the Coxeter group $W$. Let $\hat{W}$ be an index set  for the
irreducible representations of $W$, and for each $\lambda\in\hat{W}$
let $\chi_W^{\lambda}$ be the corresponding irreducible character of
$W$. If $\chi_H^{\lambda}$ is the character of the irreducible
representation of $H$ indexed by $\lambda\in\hat{W}$ then
$$
\chi_H^{\lambda}(T_w)\biggl|_{q=1}=\chi_{W}^{\lambda}(w)
$$
for all $w\in W$. In particular, the irreducible representations of
$W$ and $H$ indexed by the same $\lambda$, $\lambda\in\hat{W}$, have
the same dimensions. Define a trace $\vec{t}:\; H\rightarrow\C$ on
$H$ by
$$
\vec{t}(T_w)=\left\{
               \begin{array}{ll}
                 P_W(q), & \hbox{if}\;\;w=1,  \\
                 0, & \hbox{otherwise,}
               \end{array}
             \right.
$$
where $P_W(q)=\sum\limits_{w\in W}q^{l(w)}$ is the
Poincar$\acute{\mbox{e}}$ polynomial of the group $W$. The
\textit{generic degrees} are the constants $t_{\lambda}$ defined by
\begin{equation}\label{-11}
\vec{t}=\sum\limits_{\lambda\in\hat{W}}t_{\lambda}\chi^{\lambda}_H.
\end{equation}
Let $S(n)$ be the symmetric group. $S(n)$ is generated by the simple
transpositions $s_i=(i,i+1), 1\leq i\leq n-1$.  The irreducible
representation of $S(n)$, and of the corresponding Iwahori-Hecke
algebra $H$ are indexed by Young diagrams  with $n$ boxes. Let
$l(\lambda)$ be the number of nonzero rows in the Young diagram
$\lambda$ (the length of the Young diagram), and let $|\lambda|$ be
the number of boxes of $\lambda$. We number the rows and columns as
for matrices, and  denote by $\lambda_i$  and $\lambda_j'$ the
length of the $i^{\mbox{th}}$ row and of the $j^{\mbox{th}}$
respectively. The hook length $h(u)$ of a box $u$ in position
$(i,j)$ of $\lambda$ is
$$
h(u)=\lambda_i-i+\lambda_j'-j+1.
$$
Let $b(\lambda)=\sum\limits_{i=1}^{l(\lambda)}(i-1)\lambda_i$, and
introduce notations $[k]=1-q^k$, $[k]!=[1][2]\ldots [k]$. With these
notations the generic degrees $t_{\lambda}$ are given by
\begin{equation}\label{-12}
t_{\lambda}=\frac{q^{b(\lambda)}[n]!}{\prod_{u\in\lambda}[h(u)]}.
\end{equation}
For the Poincar$\acute{\mbox{e}}$ polynomial of $S(n)$ there is an
explicit formula
\begin{equation}\label{-13}
P_{S(n)}(q)=\prod\limits_{i=1}^{n-1}\frac{q^{i+1}-1}{q-1}=\frac{[n]!}{(1-q)^n}.
\end{equation}
\subsection{$q$-deformation of the Plancherel measures}\label{IwHSecII}
Consider equation (\ref{-11}) in the case of $W=S(n)$. When $w$ is
the unit element of $S(n)$  equation (\ref{-11}) takes the form
\begin{equation}\label{-14}
P_{S(n)}(q)=\sum\limits_{|\lambda|=n}t_{\lambda}\dim\lambda.
\end{equation}
where $\dim\lambda$ is the dimension of the irreducible
representation of $S(n)$ parameterized by $\lambda$, $|\lambda|=n$.
Alternatively, $\dim\lambda$ can be understood as the number of the
standard Young tableaux of the shape $\lambda$. A convenient
explicit formula for $\dim\lambda$ is
\begin{equation}
\dim\lambda=\frac{n!}{\prod_{u\in\lambda}h(u)},
\end{equation}
see, for example, Fulton and Harris \cite{fulton}, $\S 4.1$.
Inserting expressions for the generic degrees $t_{\lambda}$
(equation (\ref{-12})), and for the Poincar$\acute{\mbox{e}}$
polynomial $P_{S(n)}$ of $S(n)$ (equation (\ref{-13})) into formula
(\ref{-14}) we obtain the identity
\begin{equation}\label{-144}
\sum\limits_{|\lambda|=n}\frac{q^{b(\lambda)}\dim\lambda}{\prod_{u\in\lambda}[h(u)]}
=\frac{1}{(1-q)^n}.
\end{equation}
Denote by $\Y_n$ the set of  Young diagrams $\lambda$ with $n$
boxes, and introduce the following function of $\lambda$ on $\Y_n$
\begin{equation}\label{-15}
M^{(n)}_q(\lambda)=(1-q)^n\dim\lambda\frac{q^{b(\lambda)}}{\prod_{u\in\lambda}[h(u)]}.
\end{equation}
Then $\sum_{|\lambda|=n}M^{(n)}_q(\lambda)=1$, and each $M^{(n)}_q$
is a probability distribution on the set $\Y_n$ of Young diagrams
with $n$ boxes. We have
\begin{equation}
M^{(n)}_{q=1}(\lambda)=M^{(n)}_{Plancherel}(\lambda),\;\;\;\mbox{where}
\;\;
M^{(n)}_{Plancherel}(\lambda)=\frac{\left(\dim\lambda\right)^2}{n!}
\nonumber
\end{equation}
is the Plancherel measure. Thus $M^{(n)}_q$ can be understood as a
$q$-deformation of the Plancherel measure.

\subsection{Relation with non-uniform random permutations}\label{IwHSecIIIII}
It is a well known fact that the Plancherel measure is a push
forward of the uniform distribution on the symmetric group. In this
section we show that $M^{(n)}_q$ can be understood as a push forward
of a non-uniform distribution on the symmetric group $S(n)$.

 We assume that $S(n)$ is realized
as  the group of permutations of the set
$\left\{1,2,\ldots,n\right\}$. Let $\sigma$ be a permutation from
$S(n)$. We say that $i$, $i\in\left\{1,2,\ldots ,n\right\}$, is a
descent if $\sigma(i)>\sigma(i+1)$. Denote by $D(\sigma)$ the set of
all descents of $\sigma$, and define the major index of $\sigma$,
$\MAJ(\sigma)$, by the formula
$$
\MAJ(\sigma)=\sum\limits_{i\in D(\sigma)}i.
$$
Introduce a probability distribution on $S(n)$ by setting
\begin{equation}\label{01}
\mathbb{P}(\sigma)=\frac{q^{\MAJ(\sigma)}}{\sum\limits_{\sigma\in
S(n)}q^{\MAJ(\sigma)}}.
\end{equation}
If the value of the parameter $q$ approaches to 1 then $\mathbb{P}$
approaches to the uniform distribution on the symmetric group
$S(n)$, and if the value of the parameter $q$ approaches to 0 then
$\mathbb{P}$ approaches to the distribution concentrated at the unit
element $\sigma=e$ of the group $S(n)$.

Let $T$ be a standard Young tableau with entries $1,2,3,\ldots, n$.
Define a descent of $T$ to be an integer $i$ such that $i+1$ appears
in a row of $T$ lower than $i$, and define the descent set $D(T)$ to
be the set of all descents of $T$. For instance, the standard Young
tableau

\setlength{\unitlength}{4pt}
\begin{picture}(20,24)
\put(20,0){\framebox(4,4){$13$}}
\put(20,4){\framebox(4,4){$11$}} \put(24,4){\framebox(4,4){$12$}}
\put(20,8){\framebox(4,4){$9$}} \put(24,8){\framebox(4,4){$7$}}
\put(20,12){\framebox(4,4){$2$}} \put(24,12){\framebox(4,4){$5$}}
\put(28,12){\framebox(4,4){$8$}} \put(32,12){\framebox(4,4){$10$}}
 \put(20,16){\framebox(4,4){$1$}}
\put(24,16){\framebox(4,4){$3$}} \put(28,16){\framebox(4,4){$4$}}
\put(32,16){\framebox(4,4){$6$}}
\end{picture}
\\
has the descent set $\left\{1,4,6,8,10,12\right\}$. For any standard
Young tableau $T$ define the major index $\MAJ(T)$ by
$$
\MAJ(T)=\sum\limits_{i\in D(T)}i.
$$
\begin{prop}\label{P01}
For any Young diagram $\lambda$ we have
$$
\sum\limits_{T}q^{\MAJ(T)}=
\frac{q^{b(\lambda)}[n]!}{\prod_{u\in\lambda}[h(u)]},
$$
where $T$ ranges over all standard Young tableaux of shape
$\lambda$.
\end{prop}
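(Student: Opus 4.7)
The plan is to compute the principal specialization $s_\lambda(1,q,q^2,\ldots)$ of the Schur function indexed by $\lambda$ in two different ways and then equate the results.

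First, I would invoke the hook-content (stable principal) specialization formula
$$
s_\lambda(1,q,q^2,\ldots)=\frac{q^{b(\lambda)}}{\prod_{u\in\lambda}(1-q^{h(u)})},
$$
which is a classical consequence of the Jacobi-Trudi identity together with the ordinary hook-length formula; a derivation is given in Macdonald's \emph{Symmetric Functions and Hall Polynomials} (Example I.5.5) or in Stanley's \emph{Enumerative Combinatorics II} (Corollary 7.21.3). This accounts for the numerator $q^{b(\lambda)}$ and the product $\prod_u(1-q^{h(u)})$ that will appear on the right-hand side of the target identity.

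Next, I would expand $s_\lambda$ as the generating function for semistandard Young tableaux, $s_\lambda(x)=\sum_T\prod_u x_{T(u)}$, and specialize $x_i=q^{i-1}$, obtaining $s_\lambda(1,q,q^2,\ldots)=\sum_T q^{\sum_u(T(u)-1)}$. The second ingredient is the Stanley/Gansner ``recording tableau plus compatible sequence'' bijection, which represents each SSYT $T$ of shape $\lambda$ uniquely as a pair $\bigl(Q,(c_1\le c_2\le\cdots\le c_n)\bigr)$, where $Q$ is a standard Young tableau of shape $\lambda$ and $(c_i)$ is a weakly increasing sequence of non-negative integers satisfying the strict inequality $c_i<c_{i+1}$ at every descent $i\in D(Q)$. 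Substituting $c_i=d_i+\#\{j\in D(Q):j\ge i\}$ with $0\le d_1\le d_2\le\cdots\le d_n$ collapses the sum of $q^{c_1+\cdots+c_n}$ over admissible sequences into $q^{\MAJ(Q)}/\prod_{i=1}^n(1-q^i)$, and hence
$$
s_\lambda(1,q,q^2,\ldots)=\frac{\sum_T q^{\MAJ(T)}}{\prod_{i=1}^n(1-q^i)},
$$
the sum running over SYT of shape $\lambda$.

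Equating the two expressions and using $[i]=1-q^i$, so that $[n]!=\prod_{i=1}^n(1-q^i)$, produces the claimed identity. The delicate step is the second one: verifying that the decomposition of SSYT into pairs (SYT, admissible weakly increasing sequence) is a bijection and that the weights match correctly under the substitution linking the $c_i$ to the $d_i$. This is the heart of Stanley's $P$-partition machinery; once it is in place, the rest is bookkeeping. An alternative that avoids $P$-partitions is an induction on $n$ via the branching rule, matching the recursion $\MAJ(T)=\MAJ(T\!\setminus\!\{n\})+\varepsilon$ for an explicit contribution $\varepsilon$ against the product-over-hooks recursion for the right-hand side; the obstacle in that route is the careful tracking of how the hook lengths of the outer corners change, and how these changes interact with whether $n-1$ is a descent of $T$.
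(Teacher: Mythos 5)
Your argument is correct and is essentially the same as the paper's: the paper simply cites Stanley's \emph{Enumerative Combinatorics}, Vol.\ 2, Chapter 7 (pp.\ 374--376), where the identity is proved exactly by comparing the hook-content evaluation of $s_\lambda(1,q,q^2,\ldots)$ with its $P$-partition expansion $\sum_T q^{\MAJ(T)}/\prod_{i=1}^n(1-q^i)$ over standard Young tableaux. You have reproduced that proof in full rather than citing it, and the two specializations you equate, together with $[n]!=\prod_{i=1}^n(1-q^i)$ and $b(\lambda)=\sum_i(i-1)\lambda_i$, do yield the stated formula.
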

\begin{proof}
The proof is given in Stanley \cite{stanley},  Chapter 7, pages
374-376.
\end{proof}
\begin{prop}\label{P02}
Let $\sigma\in S(n)$, and assume that $\sigma$  corresponds to the
pair $(P,Q)$ of standard Young tableaux of the same shape via the
Robinson-Schensted-Knuth algorithm. Then $D(P)=D(\sigma^{-1})$, and
$D(Q)=D(\sigma)$, where $D$ denotes the descent set.
\end{prop}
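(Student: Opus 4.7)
The plan is to deduce both statements from a single row-bumping lemma together with the symmetry theorem of the Robinson--Schensted--Knuth (RSK) correspondence.

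First I would prove the claim $D(Q)=D(\sigma)$ directly. Let $P_i$ denote the insertion tableau after the $i$-th step of RSK applied to $\sigma(1),\sigma(2),\ldots,\sigma(n)$, and let $B_i$ be the box that is added to $P_{i-1}$ when $\sigma(i)$ is row-inserted; by construction $B_i$ is the box of $Q$ containing the entry $i$. Thus $i\in D(Q)$ iff $B_{i+1}$ lies in a strictly lower row than $B_i$. So it suffices to establish the following \emph{row-bumping comparison lemma}: if one performs two successive row insertions $(P\leftarrow a)\leftarrow b$, adding boxes $B_a$ and $B_b$, then $B_b$ lies strictly below $B_a$ if and only if $a>b$. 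The proof is by induction on the row being bumped, comparing the two bumping paths: when $a\le b$, an easy monotonicity argument shows that at every row either $b$ or its successive bumpees sit strictly to the right of the corresponding entry bumped by $a$, so the $b$-path terminates weakly higher and strictly to the right; when $a>b$, the reverse inequality propagates and forces the $b$-path to turn at or before the $a$-path, terminating strictly lower and weakly to the left. Applying this with $a=\sigma(i)$ and $b=\sigma(i+1)$ gives $i\in D(Q)\iff \sigma(i)>\sigma(i+1)\iff i\in D(\sigma)$.

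For the claim $D(P)=D(\sigma^{-1})$ I would invoke the symmetry theorem of RSK: if $\sigma\longleftrightarrow(P,Q)$ under RSK, then $\sigma^{-1}\longleftrightarrow(Q,P)$. (This is a standard fact, proved for instance by the matrix-ball construction or Sch\"utzenberger's symmetry argument.) Applying the already-proved identity $D(Q')=D(\tau)$ to $\tau=\sigma^{-1}$ with insertion/recording pair $(Q,P)$, the recording tableau of $\sigma^{-1}$ is $P$, so $D(P)=D(\sigma^{-1})$.

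The main obstacle is the row-bumping comparison lemma; this is the only genuinely combinatorial content, and it must be set up cleanly so that the inductive step propagates the correct inequality between the two bumping paths at each row. Everything else (the indexing correspondence between added boxes and entries of $Q$, and the passage from $Q$ to $P$ via inversion) is then formal, and together they yield the proposition.
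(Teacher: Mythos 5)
Your proof is correct, and it is essentially the argument the paper points to: the paper gives no proof of its own but simply cites Stanley (Chapter 7, p.~382), where the statement is established exactly as you do, via the row-bumping comparison lemma for two successive insertions together with the RSK symmetry theorem $\sigma\leftrightarrow(P,Q)\Rightarrow\sigma^{-1}\leftrightarrow(Q,P)$. The one lemma you leave as the ``main obstacle'' is the standard Row Bumping Lemma, and your stated conclusion (the second box lies strictly below the first if and only if $a>b$) is the correct form of it.
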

\begin{proof}
See Stanley \cite{stanley}, Chapter 7, page 382.
\end{proof}
\begin{prop}
The probability distribution $M^{(n)}_q$ defined by equation
(\ref{-15}) is a push forward of the non-uniform distribution
$\mathbb{P}$ on $S(n)$ (defined by equation (\ref{01})) via the
Robinson-Schensted-Knuth correspondence.
\end{prop}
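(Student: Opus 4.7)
The plan is to compute the pushforward mass at a fixed $\lambda \in \Y_n$ directly, by decomposing the fiber of RSK above $\lambda$ into pairs of standard tableaux and exploiting Propositions \ref{P01} and \ref{P02}. First I would write, for the set $RSK^{-1}(\lambda)$ of permutations whose insertion/recording tableaux have shape $\lambda$,
\begin{equation*}
\sum_{\sigma \in RSK^{-1}(\lambda)} \mathbb{P}(\sigma) \;=\; \frac{1}{Z_n(q)} \sum_{(P,Q)} q^{\MAJ(\sigma(P,Q))},
\end{equation*}
where $Z_n(q) = \sum_{\sigma \in S(n)} q^{\MAJ(\sigma)}$ is the normalizing constant and $(P,Q)$ ranges over pairs of standard Young tableaux of shape $\lambda$. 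By Proposition \ref{P02}, $D(\sigma) = D(Q)$, so $\MAJ(\sigma) = \MAJ(Q)$ depends only on the recording tableau. Hence the double sum factors as
\begin{equation*}
\sum_{(P,Q)} q^{\MAJ(Q)} \;=\; \Bigl(\sum_P 1\Bigr)\Bigl(\sum_Q q^{\MAJ(Q)}\Bigr) \;=\; \dim\lambda \cdot \frac{q^{b(\lambda)}[n]!}{\prod_{u \in \lambda}[h(u)]},
\end{equation*}
the last equality being exactly Proposition \ref{P01}.

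Next I would identify the normalizing constant as the Poincar\'e polynomial of $S(n)$. This is the classical theorem of MacMahon that the major index and the length (inversion number) are equidistributed on $S(n)$, giving
\begin{equation*}
Z_n(q) \;=\; \sum_{\sigma \in S(n)} q^{\MAJ(\sigma)} \;=\; \sum_{\sigma \in S(n)} q^{l(\sigma)} \;=\; P_{S(n)}(q) \;=\; \frac{[n]!}{(1-q)^n},
\end{equation*}
using (\ref{-13}). Alternatively, one can bypass MacMahon and obtain the same value internally: summing the already-established formula over all shapes $\lambda \in \Y_n$ and invoking the character identity (\ref{-144}) yields $Z_n(q) = [n]!/(1-q)^n$ directly.

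Combining the two displays gives
\begin{equation*}
\sum_{\sigma \in RSK^{-1}(\lambda)} \mathbb{P}(\sigma) \;=\; \frac{(1-q)^n}{[n]!} \cdot \dim\lambda \cdot \frac{q^{b(\lambda)}[n]!}{\prod_{u \in \lambda}[h(u)]} \;=\; (1-q)^n \dim\lambda \cdot \frac{q^{b(\lambda)}}{\prod_{u \in \lambda}[h(u)]},
\end{equation*}
which is precisely $M_q^{(n)}(\lambda)$ from (\ref{-15}). Since RSK is a bijection $S(n) \to \bigsqcup_{\lambda} RSK^{-1}(\lambda)$, this identity for every $\lambda$ is exactly the statement that $M_q^{(n)}$ is the pushforward of $\mathbb{P}$.

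The bulk of the argument is bookkeeping; the only nontrivial ingredients are the two cited propositions (supplied in the text) and the evaluation of $Z_n(q)$. I would regard the latter as the main conceptual obstacle, but it is already implicit in the paper because identity (\ref{-144}), derived from the trace decomposition of the Iwahori--Hecke algebra, forces $Z_n(q) = P_{S(n)}(q)$ once one sums the fiber computation over all $\lambda$. So the proof is essentially a three-line chain: factor the fiber sum with Proposition \ref{P02}, evaluate each factor with Proposition \ref{P01}, and normalize.
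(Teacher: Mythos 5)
Your proposal is correct and follows essentially the same route as the paper: both factor the RSK fiber sum using $D(\sigma)=D(Q)$ from Proposition \ref{P02}, evaluate $\sum_Q q^{\MAJ(Q)}$ by Proposition \ref{P01}, and compute the normalizing constant by summing over all shapes and invoking identity (\ref{-144}) — your ``internal'' alternative is exactly the paper's argument, with MacMahon's equidistribution being an optional extra.
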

\begin{proof}
Assume that $(P(\sigma), Q(\sigma))$ is the pair of standard Young
tableaux which is in one-to-one correspondence with $\sigma$ via the
Robinson-Schensted-Knuth algorithm, $\sigma$ is an element of
$S(n)$. Suppose that the probability of $\sigma$ is
$\mathbb{P}(\sigma)$, where $\mathbb{P}(\sigma)$ is given explicitly
by equation (\ref{01}). Then $\mathbb{P}(\sigma)$ equals  the
probability to find the pair $(P(\sigma), Q(\sigma))$ among all
possible pairs of Young diagrams with $n$ boxes, and of the same
shape. Denote this probability by $\mathbb{P}\left\{(P(\sigma),
Q(\sigma))\right\}$. Since $D(\sigma)=D(Q(\sigma))$ we have
$\MAJ(\sigma)=\MAJ(Q)$, and $\mathbb{P}\left\{(P(\sigma),
Q(\sigma))\right\}$ takes the form
$$
\mathbb{P}\left\{(P(\sigma),
Q(\sigma))\right\}=\frac{q^{\MAJ(Q)}}{\sum\limits_{\SP(P)=\SP(
Q)}q^{\MAJ(Q)}},
$$
where the sum is over all standard Young tableaux with $n$ boxes,
and of the same shape. Let us compute the probability of the event
that the tableaux in the pair $(P(\sigma),Q(\sigma))$ are of the
same shape $\lambda$, $|\lambda|=n$. This probability is
\begin{equation}\label{02}
\sum\limits_{\SP(P)=\SP(Q)=\lambda}\mathbb{P}\left\{(P(\sigma),
Q(\sigma))\right\}=\frac{\dim\lambda\sum\limits_{Q:
\SP(Q)=\lambda}q^{\MAJ(Q)}}{\sum\limits_{|\lambda|=n}\left(\dim\lambda\sum\limits_{Q:\SP(Q)
=\lambda}q^{\MAJ(Q)}\right)},
\end{equation}
where $\dim\lambda$ is the number of the standard Young tableaux of
the shape $\lambda$. The expression in the righthand side of
(\ref{02}) can be rewritten further using Proposition \ref{P01} and
formula (\ref{-144}). The result is
$$
\sum\limits_{\SP(P)=\SP(Q)=\lambda}\mathbb{P}\left\{(P(\sigma),
Q(\sigma))\right\}=(1-q)^n\dim\lambda\frac{q^{b(\lambda)}}{\prod_{u\in\lambda}[h(u)]}=M^{(n)}_q(\lambda).
$$
Therefore, $M^{(n)}_q(\lambda)$ is exactly the probability of the
event that the tableaux in the pair $(P(\sigma),Q(\sigma))$ are of
the same shape $\lambda$, where the pair $(P(\sigma),Q(\sigma))$
corresponds to permutation $\sigma$ via the Robinson-Schensted-Knuth
algorithm, and $\sigma$ is a random permutation from $S(n)$ with
respect to  the probability distribution $\mathbb{P}$ defined by
(\ref{01}). We conclude that $M^{(n)}_q$ is push forward of the
nonuniform distribution $\mathbb{P}$ on $S(n)$.

\end{proof}
\begin{rem}
1) Several $q$-analogs of the Plancherel measure were studied in a
paper by Fulman \cite{fulman} in connection with increasing and
decreasing subsequences in non-uniform random permutations. However
the measures considered in Ref. \cite{fulman} are different
from $M^{(n)}_q$.\\
2) The measure $M^{(n)}_q$ is a particular case of knot ergodic
central measures, see the book by Kerov \cite{kerov1}, Section 3,
$\S$4. The description of knot measures in the content of the
representation theory of the infinite-dimensional Hecke algebra
$H_{\infty}(q)$ can be found  in the paper by Vershik and Kerov
\cite{vershik1}.
\end{rem}
\section{Transition probabilities on the Young graph}
\subsection{The Young graph}
For two Young diagrams $\lambda$ and $\mu$ write
$\mu\nearrow\lambda$ (equivalently, $\lambda\searrow\mu$) if
$\mu\subset\lambda$ and $|\mu|=|\lambda|-1$, i.e. $\mu$ is obtained
from $\lambda$ by removing one box. Let $\Y$ denote the lattice of
Young diagrams ordered by inclusion. We consider $\Y$ as a graph
whose vertices are arbitrary Young diagrams $\mu$ and the edges are
couples $(\mu,\lambda)$ such that $\lambda\searrow\mu$. We shall
call $\Y$ the Young graph, and shall denote the level consisting of
the Young diagrams with $n$ boxes by $\Y_n$. In this content a
standard Young tableau can be understood as a  directed path
$$
\emptyset\nearrow\lambda^{(1)}\nearrow\ldots
\nearrow\lambda^{(n)}=\lambda
$$
exiting from the initial vertex $\lambda=\emptyset$ of the Young
graph. The dimension of  a Young diagram $\lambda$ is the number
$\dim\lambda$ defined recursively as follows: $\dim\emptyset=0$ for
the empty diagram $\lambda=\emptyset$, and
\begin{equation}\label{12}
\dim\Lm=\sum\limits_{\lm:\;\lm\nearrow\Lm}\dim\lm.
\end{equation}
It is clear from the definition above that $\dim\lambda$ is the
number of standard Young tableaux of the shape $\lambda$. Also note
that $\dim\lambda$ coincides with the dimension of the corresponding
representation of the symmetric group, and equation (\ref{12})
follows from the Young branching rule for the characters of the
finite symmetric group $S(n)$, $n=1,2,\ldots $.
\begin{rem}
The Young graph is a particular case of multiplicative graphs. Other
examples of multiplicative graphs  are the Jack graph, the Kingman
graph, the Schur graph, see the paper by Borodin and Olshanski
\cite{borodinolshanski1} for details and further references.
\end{rem}
\subsection{Harmonic functions on the Young graph}\label{S22}
A (real) valued function $\varphi(\lambda)$ is called a harmonic
function on the Young graph if it satisfies the condition
\begin{equation}\label{21}
\varphi(\lambda)=\sum\limits_{\Lambda: \Lm\searrow\lm}\varphi(\Lm)
\end{equation}
for any $\lambda\in\Y$. For the representation-theoretic meaning of
the harmonic functions on the Young graph see Refs.
\cite{vershik2,kerov2,borodinolshanski1}. We are interested in
nonnegative harmonic functions $\varphi$ normalized at the empty
diagram: $\varphi(\emptyset)=1$. As in Ref. \cite{borodinolshanski1}
we denote the set of such functions by $\mathcal{H}_1^{+}(\Y)$.
\begin{prop}\label{P21}
Let $\varphi\in\mathcal{H}_1^{+}(\Y)$, and let $M(\lambda)$ be a
function on the vertices of $\Y$ defined by
$M(\lambda)=\dim\lambda\varphi(\lambda)$. Denote by $M^{(n)}$ the
restriction of the function $M(\lambda)$ to the $n$th level $\Y_n$,
$n=0,1,2,\ldots$. Then $\sum_{|\lm|=n}M^{(n)}(\lm)=1$, i.e.
$M^{(n)}$ is a probability distribution on $\Y_n$.
\end{prop}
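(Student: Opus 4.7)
The plan is a straightforward induction on $n$, combined with a swap of summation that matches the harmonicity of $\varphi$ with the recursive formula for $\dim$.

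First I would handle the base case $n=0$. The only diagram of size $0$ is the empty diagram, and the normalization $\varphi(\emptyset)=1$ together with $\dim\emptyset=1$ (the unique empty tableau) gives $\sum_{|\lambda|=0}M^{(0)}(\lambda)=1$. Nonnegativity $M^{(n)}(\lambda)\ge 0$ is immediate from $\dim\lambda\ge 0$ and the hypothesis that $\varphi\in\mathcal{H}_1^{+}(\Y)$ takes nonnegative values.

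For the inductive step, assume $\sum_{|\lambda|=n}M^{(n)}(\lambda)=1$ and consider level $n+1$. I would start from the harmonicity relation (\ref{21}) and write
\begin{equation}
\sum_{|\lambda|=n}\dim\lambda\cdot\varphi(\lambda)=\sum_{|\lambda|=n}\dim\lambda\sum_{\Lambda:\Lambda\searrow\lambda}\varphi(\Lambda).\nonumber
\end{equation}
The key step is then to interchange the order of summation, grouping the contributions by $\Lambda$ rather than by $\lambda$. Since the condition $\Lambda\searrow\lambda$ is symmetric in the sense that it is equivalent to $\lambda\nearrow\Lambda$, and each edge of $\Y$ between levels $n$ and $n+1$ is counted exactly once in either order, the double sum becomes
\begin{equation}
\sum_{|\Lambda|=n+1}\varphi(\Lambda)\sum_{\lambda:\lambda\nearrow\Lambda}\dim\lambda.\nonumber
\end{equation}
Applying the recursive definition (\ref{12}) of $\dim\Lambda$ to the inner sum collapses this to $\sum_{|\Lambda|=n+1}\dim\Lambda\cdot\varphi(\Lambda)=\sum_{|\Lambda|=n+1}M^{(n+1)}(\Lambda)$. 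Combining with the inductive hypothesis yields the desired identity $\sum_{|\Lambda|=n+1}M^{(n+1)}(\Lambda)=1$.

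There is no real obstacle here; the only point requiring minor care is the Fubini-type swap of summation, which is justified trivially because each level of $\Y$ contains only finitely many diagrams of a given size and all summands are nonnegative. The proof is essentially a bookkeeping argument showing that the harmonicity condition on $\varphi$ is exactly dual to the branching rule for $\dim$, so that multiplying one by the other produces a quantity whose sum is preserved from level to level.
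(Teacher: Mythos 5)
Your proof is correct and uses exactly the same two ingredients as the paper's: the harmonicity relation (\ref{21}) and the branching rule (\ref{12}) for $\dim$, combined by an interchange of summation over the edges of $\Y$ joining levels $n$ and $n+1$. The only difference is that you run the induction forward (level $n$ implies level $n+1$), whereas the paper writes the identical chain of equalities starting from the assumption at level $n+1$; since every step is an equality the two directions are interchangeable, and yours is the logically cleaner way to combine with the base case at $\Y_0$.
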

\begin{proof}
The proof is by induction. Since $\varphi(\emptyset)=1$,
$\dim(\emptyset)=1$ the claim is obviously valid for the level
$\Y_0$. Assume that the claim holds for the level $\Y_{n+1}$, i.e.
$$
\sum\limits_{|\Lm|=n+1}M^{(n+1)}(\Lm)=\sum\limits_{|\Lm|=n+1}\varphi(\Lm)\dim\Lm
=1.
$$
Insert the expression for $\dim\Lm$ (equation (\ref{12})) into the
formula written above, and obtain
$$
1=\sum\limits_{|\Lm|=n+1}\varphi(\Lm)\left(\sum\limits_{\lm:\;\lm\nearrow\Lm}\dim\lm\right)
=\sum\limits_{|\Lm|=n+1}\varphi(\Lm)\sum\limits_{|\lm|=n}\dim\lm.
$$
It is possible to  rewrite the right side  further as follows
$$
\sum\limits_{|\lambda|=n}\dim\lambda\sum\limits_{\Lambda:\Lm\searrow\lm}\varphi(\Lm).
$$
But the second sum above is precisely $\varphi(\lm)$, see equation
(\ref{21}), and the multiplication of the second sum on $\dim\lm$ is
$M^{(n)}(\lm)$. Thus  the formula $\sum_{|\lm|=n}M^{(n)}(\lm)=1$ is
obtained, and the claim of the proposition follows.
\end{proof}
\subsection{Transition and co-transition probabilities}\label{S23}
\begin{defn}
For two vertices $\lm$ and $\Lm$ of the Young graph $\Y$ such that
$\lm\in\Y_n$ and $\Lm\in\Y_{n+1}$ set
\begin{equation}{\label{22}}
q(\lm,\Lm)=\left\{
              \begin{array}{ll}
                \frac{\dim\lm}{\dim\Lm}, & \lm\nearrow\Lm, \\
                0, & \hbox{otherwise.}
              \end{array}
            \right.
\end{equation}
Then $\sum_{\lm\nearrow\Lm}q(\lm,\Lm)=1$, and we will refer to the
numbers $q(\lm,\Lm)$ as to the \textit{co-transition probabilities}
on the Young graph $\Y$.
\end{defn}
\begin{defn}
Assume that $\varphi(\lm)$ is a strictly positive valued harmonic
function, $\varphi\in\mathcal{H}_1^{+}(\Y)$, and set
\begin{equation}{\label{23}}
p(\lm,\Lm)=\left\{
              \begin{array}{ll}
                \frac{\varphi(\Lm)}{\varphi(\lm)}, & \Lm\searrow\lm, \\
                0, & \hbox{otherwise.}
              \end{array}
            \right.
\end{equation}
Then $\sum_{\Lm\searrow\lm}p(\lm,\Lm)=1$, and we refer to the
numbers $p(\lm,\Lm)$ as the \textit{transition probabilities} on the
Young graph $\Y$.
\end{defn}
Proposition \ref{P21} implies that the transition probabilities
define $M$ and $M^{(n)}$ uniquely.
\subsection{Transition probabilities for $M^{(n)}_{q}$}
A possible way to introduce transition probabilities on $\Y$ is to
use the Pieri rule for the Schur symmetric functions
\begin{equation}\label{3a1}
p_1\cdot s_{\lm}=\sum\limits_{\Lm\searrow\lm}s_{\Lambda},
\end{equation}
see Macdonald \cite{macdonald}, section I, $\S 5$.  Let
$\alpha=\{\alpha_i\}_{i=1}^{\infty}$,
$\beta=\{\beta_i\}_{i=1}^{\infty}$ be pairs of non-increasing
sequences of nonnegative  numbers satisfying the condition
\begin{equation}\label{3a2}
\sum\limits_{i=1}^{\infty}\alpha_i+\sum\limits_{i=1}^{\infty}\beta_i
\leq 1.
\end{equation}
Define the \textit{extended Schur functions}
$s_{\alpha}(\alpha,\beta)$ by the Frobenius formula
\begin{equation}\label{3a3}
s_{\lm}(\alpha,\beta)=\sum\limits_{|\rho|=n}\frac{1}{z_{\rho}}\chi^{\lambda}_{\rho}
p_{\rho}(\alpha,\beta),\;\; |\lambda|=n,
\end{equation}
where
$$
p_{\rho}(\alpha,\beta)=p_{\rho_1}(\alpha,\beta)\cdot
p_{\rho_2}(\alpha,\beta)\ldots,
$$
and the power sums $p_k(\alpha,\beta)$ are given by
\begin{equation}\label{3a4}
p_k(\alpha,\beta)=\left\{
                    \begin{array}{ll}
                      1, & k=1, \\
                      \sum\limits_{i=1}^{\infty}\alpha_i^k+(-1)^{k+1}\sum\limits_{i=1}^{\infty}\beta_i^k, &
k\geq 2.
                    \end{array}
                  \right.
\end{equation}
With this realization of the algebra $\Lambda$ of the symmetric
functions, condition (\ref{3a1}) implies that the ratios
\begin{equation}\label{3a5}
p(\lm,\Lm)=\left\{
             \begin{array}{ll}
               \frac{s_{\Lm}(\alpha,\beta)}{s_{\lm}(\alpha,\beta)}, & \Lm\searrow\lm, \\
               0, & \hbox{otherwise,}
             \end{array}
           \right.
\end{equation}
can be understood as transition probabilities.

Let $\alpha=\{(1-q)q^k\}_{k=0}^{\infty}, \beta=0$. In this case
\begin{equation}\label{3a6}
s_{\lm}(\alpha,\beta)=(1-q)^{|\lambda|}q^{b(\lambda)}\prod\limits_{b\in\lambda}[h(b)]^{-1},
\end{equation}
where $[k]=(1-q^k)$,
$b(\lambda)=\sum_{i=1}^{l(\lambda)}(i-1)\lambda_i$. Moreover,
$s_{\lm}(\alpha,\beta)$ can be understood as  harmonic functions on
the Young graph, as it follows from the Pieri rule (\ref{3a1}), and
from equation (\ref{3a4}). Harmonic functions determine uniquely
transition probabilities and distributions on the levels of the
Young graph, see sections \ref{S22}, \ref{S23}. In particular,
$s_{\lm}(\alpha,\beta)$ defined by equation (\ref{3a6}), and the
transition probabilities defined by equation (\ref{3a5}) lead to the
$q$-deformation of the Plancherel measure $M_q^{(n)}$,
 defined by equation (\ref{-15}). In this context, the
$q$-deformation of the Plancherel measure, $M_q^{(n)}$, is a Markov
probability measure on the Young graph $\Y$, with transition
probabilities defined by (\ref{3a5}) and (\ref{3a6}).
\begin{rem}
Other choices of the parameters $\alpha, \beta$ result in
probability distributions different from $M_q^{(n)}$, see Kerov
\cite{kerov1}, section 3.4.2, examples 1-5.
\end{rem}

\section{Continual diagrams and $q$-transition
distributions}\label{SECTIONCONTDIAGRAMS}
\subsection{Continual diagrams}
Continual diagrams were introduced by Kerov in Refs.
\cite{kerov0}-\cite{kerov03}, and  used further in Refs.
\cite{biane}, \cite{ivanov}. Here we recall the definition and some
properties of the continual diagrams.
\begin{defn}\label{DEF11}
A \textit{continual diagram} is a function $w(s)$ on $\R$ such that\\
(i) $|w(s_1)-w(s_2)|\leq |s_1-s_2|$ for any $s_1, s_2\in\R$ (the
Lipshitz condition). \\
(ii) There exists a point $s_0\in\R$, called the center of $w$, such
that $w(s)=|s-s_0|$ when $|s|$ is large enough.
\end{defn}
The set of all continual diagrams is denoted by $\D$, and the subset
of continual diagrams with the center 0 is denoted by $\D^{0}$.

To any $w\in\D$ assign a function
\begin{equation}
\sigma(s)=\frac{1}{2}\left(w(s)-|s|\right).
\end{equation}
This function is called the charge of the continual diagram $w$,
$w\in\D$.
\begin{prop}
a) $\sigma'(s)$ exists almost everywhere and satisfies
$$
|\sigma'(s)|\leq 1.
$$
b) $w(s)$ is uniquely determined by the second derivative
$\sigma''(s)$.\\
c) $\sigma'(s)$ is compactly supported, and
$$
\sigma'(s)=\left\{%
\begin{array}{ll}
    \left(w'(s)+1\right)/2\geq 0, & \hbox{for}\; s<0, \\
    \left(w'(s)-1\right)/2\leq 0, & \hbox{for}\; s>0. \\
\end{array}%
\right.
$$
\end{prop}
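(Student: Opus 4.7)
The plan is to handle the three claims in the order (a), (c), (b), since (b) will rely on the compact support established in (c).

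For (a), I would first observe that the Lipschitz condition $|w(s_1)-w(s_2)|\leq|s_1-s_2|$ and the elementary inequality $||s_1|-|s_2||\leq|s_1-s_2|$ combine (via the triangle inequality) to give $|\sigma(s_1)-\sigma(s_2)|\leq|s_1-s_2|$. Thus $\sigma$ is itself $1$-Lipschitz, hence absolutely continuous on every compact interval. By Rademacher's theorem (or directly, since a monotone function exists in any decomposition of Lipschitz maps), $\sigma'$ exists almost everywhere, and the Lipschitz constant $1$ forces $|\sigma'(s)|\leq 1$ a.e.

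For (c), I would compute $\sigma'(s)$ directly wherever $w'(s)$ exists: since $(|s|)'=\SGN(s)$ for $s\neq 0$, we have $\sigma'(s)=\tfrac{1}{2}(w'(s)-\SGN(s))$. This yields the two explicit formulas in the statement. The sign claims then follow from the Lipschitz bound $|w'(s)|\leq 1$: for $s>0$, $w'(s)-1\leq 0$, and for $s<0$, $w'(s)+1\geq 0$. For compact support, invoke Definition \ref{DEF11}(ii): there exists $s_0$ and some $N$ such that $w(s)=|s-s_0|$ for $|s|\geq N$. A direct calculation shows $\sigma(s)=-s_0/2$ for $s\geq\max(N,s_0)$ and $\sigma(s)=s_0/2$ for $s\leq\min(-N,s_0)$, so $\sigma$ is locally constant outside a compact set and hence $\sigma'$ vanishes there.

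For (b), the task is to show that the map $w\mapsto\sigma''$ is injective on $\D$. Suppose two continual diagrams $w_1,w_2$ produce charges $\sigma_1,\sigma_2$ with $\sigma_1''=\sigma_2''$ (understood distributionally, since $\sigma_i'$ is only an $L^\infty$ function). Then $\sigma_1'-\sigma_2'$ is a constant on $\R$; but by (c) both $\sigma_1'$ and $\sigma_2'$ are compactly supported, so the constant must be $0$, giving $\sigma_1'=\sigma_2'$ a.e. Integrating, $\sigma_1-\sigma_2$ is a constant $c$. Now use the asymptotic values of $\sigma$ established in (c): $\sigma_i(+\infty)=-s_{0,i}/2$ and $\sigma_i(-\infty)=+s_{0,i}/2$. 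The relation $\int_\R\sigma_i'(s)\,ds=\sigma_i(+\infty)-\sigma_i(-\infty)=-s_{0,i}$ shows that $\sigma_1'=\sigma_2'$ forces $s_{0,1}=s_{0,2}$, and then the common value of $\sigma_i(+\infty)=-s_{0,i}/2$ forces $c=0$. Hence $\sigma_1=\sigma_2$, and $w_i=2\sigma_i+|\cdot|$ coincide.

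The main obstacle will be the distributional aspect of (b): $\sigma''$ is not a classical function, so one must be careful in saying it ``determines'' $\sigma'$. The clean way around this is to argue at the level of $\sigma'$ (a bounded function with compact support by (c)) and note that two such functions with the same distributional derivative differ by a constant, which must vanish by support. Everything downstream is then routine integration with the explicit boundary behavior extracted in (c).
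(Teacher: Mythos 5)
Your proof is correct and follows the same route as the paper, which simply asserts that part (a) follows from the Lipschitz condition (i) and parts (b), (c) from condition (ii) of the definition of a continual diagram; your write-up supplies exactly the details that terse argument leaves implicit (the $1$-Lipschitz bound on $\sigma$, the explicit constancy of $\sigma$ outside a compact set, and the distributional recovery of $\sigma'$ and $s_0$ from $\sigma''$).
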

\begin{proof}
The first property follows from the Lipshitz condition (i) in
Definition \ref{DEF11}. The condition (ii) of Definition \ref{DEF11}
implies the second and the third properties of the function
$\sigma(s)$.
\end{proof}
\begin{defn} A continuous piecewise linear function $w:
\R\rightarrow\R$ is called a rectangular diagram if $w'(s)=\pm 1$
and there exists a constant $s_0$ such that $w(s)=|s-s_0|$ for
sufficiently large $|s|$.
\end{defn}
A rectangular diagram is completely determined by the coordinates of
its minima $\left\{x_k\right\}_{k=1}^{m+1}$ and those of its maxima
$\left\{y_k\right\}_{k=1}^m$. The  sequences
$\left\{x_k\right\}_{k=1}^{m+1}$ and $\left\{y_k\right\}_{k=1}^m$
interlace
$$
x_1<y_1<x_2<\ldots <x_{m}<y_{m}<x_{m+1}.
$$
Conversely, any pair of interlacing sequences uniquely determines a
rectangular diagram. The set of rectangular diagrams (or,
equivalently, the set of interlacing sequences) will be denoted by
$\D_R$.
\begin{example} \textit{Young diagrams.}\\
Given $\lambda\in\Y$ define a piecewise linear function $\lambda(s)$
with slopes $\pm 1$ and local minima and maxima  at two interlacing
sequences of integer points
$$ x_1<y_1<x_2<\ldots<x_m<y_m<x_{m+1},$$
where the $x_i$'s are the local minima, and the $y_i$'s are  the
local maxima of $\lambda(s)$, see Figure 1. The correspondence
$\lambda\rightarrow\lambda(s)$ gives an embedding
$$
\Y\rightarrow\D^{0},
$$
i.e. the set $\Y$ of Young diagrams is embedded into the subspace
$\D^0$ of continual diagrams with zero center.
\end{example}
\begin{example}
 \textit{Orthogonal polynomials.}
Let $\left\{P_m(x)\right\}_{m=0}^{\infty}$ be a sequence of
orthogonal polynomials defined with respect to a probability measure
$\mu$. The roots of two consecutive polynomials $P_{m+1}(x)$,
$P_{m}(x)$ interlace, so the roots of $P_{m+1}(x)$ can be understood
as minima, and the roots of $P_{m}(x)$ can be understood as maxima
of a rectangular diagram.
\end{example}

\subsection{$q$-deformations of $R$-functions}
Fix an interval $[a,b]$, where $a$ is strictly negative, and $b$ is
strictly positive.  Denote by $\D[a,b]$ the set of continual
diagrams with the property $w(s)=|s-s_0|$ for $s\notin [a,b]$. The
space $\D[a,b]$ is endowed with the uniform convergence topology.
Denote by $\D_R[a,b]$ the subspace of rectangular diagrams in
$\D[a,b]$. Note that the subspace of rectangular diagrams,
$\D_{R}[a,b]$, is dense in $\D[a,b]$.
 In addition, denote by $\M[a,b]$ the space
 of probability measures on the interval $[a,b]$.
\begin{defn}\label{DEFQRFUNCTIONS}
1) \textit{An $R$-function of a diagram }$w\in\D[a,b]$ is a function
$R_{w}(x)$ holomorphic outside the interval $[a,b]$, and defined by
\begin{equation}
R_{w}(x)=\frac{1}{x}\exp\left[-\int\limits_a^b\frac{d\sigma(s)}{s-x}\right]
=\frac{1}{x}\exp\left[-\frac{1}{2}\int\limits_a^b\frac{d\left(w(s)-|s|\right)}{s-x}\right].
\nonumber
\end{equation}
2) \textit{An $R$-function of a measure }$\mu\in\M[a,b]$ is a
function $R_{\mu}(x)$ holomorphic outside the interval $[a,b]$, and
defined by
\begin{displaymath}
R_{\mu}(x)= \int\limits_a^b\frac{\mu(ds)}{x-s}.
\end{displaymath}
\end{defn}
Definition \ref{DEFQRFUNCTIONS} is  due  to Kerov, see  Ref.
\cite{kerov0}, section 2.2. Now let us introduce natural
\textit{$q$-deformations} of the functions $R_{w}(x)$ and
$R_{\mu}(x)$.
\begin{defn}\label{DEFQDEFRFUNCTION}
Let $0<q\leq 1$, and assume that a real variable $x$ takes values
outside the interval $[a,b]$. For $0<q<1$ \textit{the
$q$-deformation of the $R$-function of a diagram} $w\in\D[a,b]$ is
defined by the expression
\begin{equation}
R_{w}(x;q)=\frac{1-q}{1-q^x}\exp\left[-\ln
q^{-1}\int\limits_a^b\frac{d\sigma(s)}{1-q^{x-s}}\right]
=\frac{1-q}{1-q^x}\exp\left[-\frac{1}{2}\ln
q^{-1}\int\limits_a^b\frac{d\left(w(s)-|s|\right)}{1-q^{x-s}}\right],
\nonumber
\end{equation}
and \textit{the $q$-deformation of the $R$-function of a measure}
$\mu\in\M[a,b]$ is defined by the expression
\begin{displaymath}
R_{\mu}(x;q)=(1-q)\int\limits_a^b\frac{\mu(ds)}{1-q^{x-s}}.
\end{displaymath}
 For $q=1$ the
$q$-deformation of the  $R$-function of a diagram $w\in\D[a,b]$ is
defined to be  $R_{w}(x)$, and the $q$-deformation of the
$R$-function of a measure $\mu\in\M[a,b]$ is defined to be
$R_{\mu}(x)$.
\end{defn}

\subsection{$q$-transition measures}
\begin{defn}\label{Defqcorrespondence} Fix $0<q\leq 1$. We call $\mu_q$,
$\mu_q\in\M[a,b]$, \textit{a $q$-transition measure of a continual
diagram} $w(.;q)$, $w(.;q)\in\D[a,b]$, if the functions
$R_{\mu_q}(x;q)$ and $R_{w(.;q)}(x;q)$ coincide.
\end{defn}
According to definition \ref{Defqcorrespondence}, if  $0<q<1$, and
$\mu_q$ is the $q$-transition measure of the diagram $w(.;q)$,
$w(.;q)\in\D[a,b]$, then
\begin{equation}\label{qcorrespondence}
\int\limits_a^b\frac{\mu_q(ds)}{1-q^{x-s}}=\frac{1}{1-q^x}\exp\left[-\frac{1}{2}\ln
q^{-1}\int\limits_a^b\frac{d\left(w(s;q)-|s|\right)}{1-q^{x-s}}\right],
\end{equation}
and if $q=1$ then the transition measure $\mu:=\mu_{q=1}$ of a
diagram $w(.):=w(.;q=1)$, and the diagram $w(.)$ are related by the
identity
\begin{equation}
\int\limits_a^b\frac{\mu(ds)}{x-s}=\frac{1}{x}\exp\left[-\frac{1}{2}\int\limits_a^b\frac{d\left(w(s)-|s|\right)}{s-x}\right].
\end{equation}
\begin{prop}\label{ProposRECT}
For a rectangular diagram $w$ with  the minima
$\left\{x_k\right\}_{k=1}^{m+1}$ and the maxima
$\left\{y_k\right\}_{k=1}^m$ the $q$-transition measure is the
probability measure supported by the finite set $\left\{x_1,\ldots
,x_{m+1}\right\}$ whose weights
$\left\{\mu_k(w;q)\right\}_{k=1}^{m+1}$ are given explicitly by the
formula
\begin{equation}\label{WEIGHTSQQ}
\mu_k(w;q)=\prod\limits_{i=1}^{k-1}\frac{1-q^{x_k-y_i}}{1-q^{x_k-x_i}}\prod\limits_{i=k+1}^{m+1}\frac{1-q^{x_k-y_{i-1}}}{1-q^{x_k-x_i}}.
\end{equation}
\end{prop}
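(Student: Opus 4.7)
The plan is to compute $R_w(x;q)$ explicitly for a rectangular diagram, show that it reduces to a rational function of $q^x$ with simple poles at $q^{x_k}$ and simple zeros at $q^{y_k}$, and then read off the weights $\mu_k(w;q)$ as the residues in a partial-fraction expansion. The candidate measure obtained this way then automatically satisfies $R_{\mu_q}(x;q)=R_{w}(x;q)$, which is exactly Definition~\ref{Defqcorrespondence} of the $q$-transition measure.

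For the first stage I would exploit the fact that for a rectangular diagram $\sigma=\tfrac{1}{2}(w-|s|)$ is piecewise linear with $\sigma'$ supported in $[a,b]$ and vanishing on the boundary, while its distributional second derivative is
$$
\sigma''(s) = \sum_{k=1}^{m+1}\delta_{x_k} - \sum_{k=1}^{m}\delta_{y_k} - \delta_0 .
$$
The substitution $u=q^{x-s}$ supplies an explicit antiderivative of $(1-q^{x-s})^{-1}$, namely $G(s) = s - (\ln q^{-1})^{-1}\ln(1-q^{x-s})$, so that integration by parts gives
$$
\int_a^b\frac{d\sigma(s)}{1-q^{x-s}} = -\int_a^b \sigma''(s)\, G(s)\, ds ,
$$
with no boundary terms. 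Evaluating against the three groups of delta functions, and using $\sum x_k = \sum y_k$ (so that the piece $\int s\,\sigma''(s)\,ds$ vanishes), one obtains
$$
\exp\!\Big[\!-\ln q^{-1}\!\int_a^b\!\frac{d\sigma(s)}{1-q^{x-s}}\Big] = \frac{(1-q^x)\prod_{i=1}^m(1-q^{x-y_i})}{\prod_{i=1}^{m+1}(1-q^{x-x_i})}.
$$
Combining with the prefactor $(1-q)/(1-q^x)$ in the definition of $R_w(x;q)$, the factor $(1-q^x)$ cancels and $R_w(x;q) = (1-q)\prod_i(1-q^{x-y_i})/\prod_i(1-q^{x-x_i})$.

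For the second stage I would write $R_{\mu_q}(x;q) = (1-q)\sum_k \mu_k\,(1-q^{x-x_k})^{-1}$ for the candidate discrete measure and set this equal to the rational expression above. Switching to the variable $z=q^x$ turns the identity into a partial-fraction expansion of a rational function in $z$ with simple poles at $\xi_k = q^{x_k}$; extracting the residue at $z=\xi_k$ and converting back to the $x$-variable produces exactly formula~(\ref{WEIGHTSQQ}). The fact that $\sum_k \mu_k=1$, which is required for a probability measure, follows by letting $x\to+\infty$ (so $q^x\to 0$) in the partial-fraction identity: the left side becomes $\sum_k \mu_k$ while the right side visibly tends to $1$.

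The main technical hurdle is the bookkeeping in the integration by parts, specifically the need to keep track of the $-\delta_0$ term coming from differentiating $|s|$ twice. Its contribution produces precisely the factor $(1-q^x)$ in the exponentiated expression that cancels the prefactor $(1-q^x)^{-1}$ in the definition of $R_w(x;q)$; without it the normalization $\sum_k\mu_k=1$ would fail. Everything else is a routine partial-fraction computation.
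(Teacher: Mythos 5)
Your argument is correct and follows essentially the same route as the paper: write $\sigma''$ as the signed sum of delta masses $\sum_k\delta_{x_k}-\sum_k\delta_{y_k}-\delta_0$, integrate by parts to identify $R_w(x;q)$ with $(1-q)\prod_{i}(1-q^{x-y_i})/\prod_{k}(1-q^{x-x_k})$, and extract the weights $\mu_k(w;q)$ from the partial-fraction expansion in the variable $q^x$. The only (shared) caveat is that your step $\int s\,\sigma''(s)\,ds=0$, like the paper's implicit use of $\sigma(a)=\sigma(b)$, requires the diagram to be centered at the origin, i.e.\ $\sum_k x_k=\sum_k y_k$, which is the case relevant to Young diagrams.
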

\begin{proof}
Let $w$ be a rectangular diagram taken from $\D_R[a,b]$ with the
minima $\left\{x_k\right\}_{k=1}^{m+1}$ and the maxima
$\left\{y_k\right\}_{k=1}^m$. Then the function $R_w(x;q)$ takes the
form
\begin{equation}
R_{w}(x;q)=(1-q)\frac{\prod_{j=1}^m\left(1-q^{x-y_j}\right)}{\prod_{j=1}^{m+1}\left(1-q^{x-x_j}\right)}.
\nonumber
\end{equation}
Indeed, if $w$ is a rectangular diagram then the second derivative
of the function $\sigma(s)=\frac{1}{2}\left(w(s)-|s|\right)$ is
given by
$$
\sigma''(s)=\sum\limits_{k=1}^{m+1}\delta(s-x_k)-\sum\limits_{k=1}^m\delta(s-y_k)-\delta(s),
$$
and we can write
\begin{equation}\label{PROOO}
\begin{split}
\frac{\prod_{k=1}^m\left(1-q^{x-y_k}\right)}{\prod_{k=1}^{m+1}\left(1-q^{x-x_k}\right)}
&=\exp\left[-\int\limits_a^b\ln\left(1-q^{x-s}\right)\sigma''(s)ds-\ln\left(1-q^x\right)\right]
\\
&=\frac{1}{1-q^x}\exp\left[-\int\limits_a^b\ln\left(1-q^{x-s}\right)\sigma''(s)ds\right].
\end{split}
\end{equation}
The integration by parts shows that the righthand side of equation
(\ref{PROOO}) coincides with the function $R_{w}(x;q)$ divided by
$(1-q)$. The left-hand side of (\ref{PROOO}) can be rewritten as
\begin{equation}\label{PROOO1}
\frac{\prod_{k=1}^m\left(1-q^{x-y_k}\right)}{\prod_{k=1}^{m+1}\left(1-q^{x-x_k}\right)}
=\sum\limits_{k=1}^{m+1}\frac{\mu_k(w;q)}{1-q^{x-x_k}},
\end{equation}
where $\mu_k(w;q)>0$ for $k=1,\ldots ,m+1$;
$\sum\limits_{k=1}^{m+1}\mu_k(w;q)=1$; and the weights
$\left\{\mu_k(w;q)\right\}_{k=1}^{m+1}$ are given by
(\ref{WEIGHTSQQ}). Therefore, the equation
$R_{\mu}(x;q)=R_{w(.)}(x;q)$ implies in the case of rectangular
diagram that $\mu$ is supported by $\left\{x_k\right\}_{k=1}^{m+1}$,
and the weights $\left\{\mu_k(w;q)\right\}_{k=1}^{m+1}$ of $\mu$ are
given by (\ref{WEIGHTSQQ}).
\end{proof}
Recall that the $q$-deformation $M_q^{(n)}$ of the Plancherel
measure defined by equation (\ref{-15}) can be understood as a
Markov probability measure on the Young graph $\Y$, with the
transition probabilities $p(\lambda,\Lambda)$ defined by equations
(\ref{3a5}) and (\ref{3a6}). Consider the Young diagram $\lambda$ as
a rectangular diagram, see Figure 1. Denote by
$\left\{x_k\right\}_{k=1}^{m+1}$ the minima of $\lambda$, and by
$\{y_k\}_{k=1}^m$ the maxima of $\lambda$. Let us write
$\mu_k(\lambda;q)$ instead of $p(\lambda,\Lambda)$ if the square
that distinguishes $\Lambda$ from $\lambda$ is attached to the
minimum $x_k$ of $\lambda$.
\begin{prop}
The transition probabilities $\mu_k(\lambda;q)$ of the
$q$-deformation $M_q^{(n)}$ of the Plancherel measure are given by
formula (\ref{WEIGHTSQQ}), where in the left-hand side $w$ must be
replaced by $\lambda$. Thus, $\mu_k(\lambda;q)$ are the weights of
the $q$-transition measure of the diagram $\lambda$ in the sense of
definition \ref{Defqcorrespondence}.
\end{prop}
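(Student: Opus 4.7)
The plan is to reduce the claim to the partial-fraction identity already exploited in Proposition \ref{ProposRECT}. By (\ref{3a5}) and (\ref{3a6}), if $\Lambda^{(k)}$ denotes the Young diagram obtained from $\lambda$ by adding a box at the $k$-th addable corner (at the end of some row $r$), then
\[
\mu_k(\lambda;q)=\frac{s_{\Lambda^{(k)}}(\alpha,\beta)}{s_\lambda(\alpha,\beta)}=(1-q)\,q^{b(\Lambda^{(k)})-b(\lambda)}\,\frac{\prod_{u\in\lambda}[h_\lambda(u)]}{\prod_{u\in\Lambda^{(k)}}[h_{\Lambda^{(k)}}(u)]}.
\]
By uniqueness of the partial-fraction decomposition at the simple poles $\{q^{x_k}\}_{k=1}^{m+1}$ in the variable $u=q^x$ (cf. the derivation of (\ref{PROOO1}) in the proof of Proposition \ref{ProposRECT}), the proposition follows once we verify
\[
\sum_{k=1}^{m+1}\frac{\mu_k(\lambda;q)}{1-q^{x-x_k}}=\frac{\prod_{j=1}^{m}(1-q^{x-y_j})}{\prod_{k=1}^{m+1}(1-q^{x-x_k})}.
\]

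I would then evaluate the hook-length ratio explicitly. Adding a box at $(r,\lambda_r+1)$ modifies hook lengths only within row $r$ and column $\lambda_r+1$: the new cell contributes $[1]=1-q$, every pre-existing hook along that row or column increases by one, and $b(\Lambda^{(k)})-b(\lambda)=r-1$. A short calculation gives the closed form
\[
\mu_k(\lambda;q)=q^{r-1}\prod_{j=1}^{\lambda_r}\frac{1-q^{h_\lambda(r,j)}}{1-q^{h_\lambda(r,j)+1}}\prod_{i=1}^{r-1}\frac{1-q^{h_\lambda(i,\lambda_r+1)}}{1-q^{h_\lambda(i,\lambda_r+1)+1}}.
\]
The identification step is to translate this product into the corner-coordinate form (\ref{WEIGHTSQQ}). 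In the Russian convention of Figure 1, the addable corners of $\lambda$ have contents $\{x_i\}$ and the removable corners have contents $\{y_i\}$; as $j$ sweeps across row $r$ and $i$ up column $\lambda_r+1$ of $\lambda$, one successively encounters the removable and addable corners lying respectively east and north of the $k$-th addable corner, and the hooks $h_\lambda(r,j)$ and $h_\lambda(i,\lambda_r+1)$ rewrite as exponents $x_k-y_{\bullet}$ and $x_k-x_{\bullet}$ for those corners. After converting the resulting negative-exponent factors via $1-q^{-n}=-q^{-n}(1-q^{n})$, the accumulated power of $q$ cancels the prefactor $q^{r-1}$, and (\ref{WEIGHTSQQ}) is recovered.

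The hard part will be this final combinatorial dictionary — matching the arm/leg hooks of $\lambda$ in row $r$ and column $\lambda_r+1$ with the interlacing coordinates $\{x_i,y_i\}$, and tracking the compensating $q$-powers coming from the negative-exponent terms. This is a $q$-deformation of the content/hook-length combinatorics underlying Kerov's classical transition-measure formula; once it is carried out, the displayed partial-fraction identity is immediate and the proposition follows.
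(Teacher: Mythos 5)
The paper does not actually prove this proposition: its ``proof'' is the single line ``See Kerov \cite{kerov1}, section 3.4.3.'' Your proposal is therefore an attempt to supply the argument that the paper outsources, and its skeleton is the right one --- essentially the argument of the cited source. The reduction is correct: by Proposition \ref{ProposRECT} the weights in (\ref{WEIGHTSQQ}) are exactly the coefficients of the partial-fraction expansion of $\prod_j(1-q^{x-y_j})/\prod_i(1-q^{x-x_i})$ in the variable $q^x$, equivalently $\mu_k=\prod_{j}(1-q^{x_k-y_j})/\prod_{i\neq k}(1-q^{x_k-x_i})$, so it suffices to show that the ratio $s_{\Lambda^{(k)}}(\alpha,\beta)/s_{\lambda}(\alpha,\beta)$ coming from (\ref{3a5})--(\ref{3a6}) equals this residue. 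Your bookkeeping up to that point is also correct: $b(\Lambda^{(k)})-b(\lambda)=r-1$, the new cell contributes $[1]=1-q$, which cancels the extra factor $(1-q)$ from $(1-q)^{|\Lambda^{(k)}|-|\lambda|}$, and only the hooks in row $r$ and column $\lambda_r+1$ change, each by $+1$, which yields your displayed closed form for $\mu_k(\lambda;q)$.

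The gap is that the one step carrying all the content --- the ``combinatorial dictionary'' converting $q^{r-1}\prod_j[h_\lambda(r,j)]/[h_\lambda(r,j)+1]\cdot\prod_i[h_\lambda(i,\lambda_r+1)]/[h_\lambda(i,\lambda_r+1)+1]$ into the interlacing-coordinate product --- is announced but not performed. You correctly anticipate that the two hook products telescope over the removable and addable corners lying on either side of $x_k$, and that the powers of $q$ released by rewriting $1-q^{-n}=-q^{-n}(1-q^{n})$ must absorb the prefactor $q^{r-1}$; but which hook lengths survive the telescoping, why the survivors are exactly the differences $x_k-y_j$ and $x_k-x_i$, why the signs cancel, and why the residual exponent of $q$ is exactly $r-1$ are all asserted rather than verified. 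Since everything before this point is routine, the proposition is, as written, still unproven. To close the argument you would need to carry out the telescoping explicitly (for instance via the standard description of the multiset of hook lengths in a fixed row in terms of the corners of $\lambda$), or else do what the paper does and simply cite Kerov's book, where precisely this computation appears.
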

\begin{proof}
 See Kerov \cite{kerov1}, section 3.4.3.
\end{proof}
Formula (\ref{WEIGHTSQQ}) defines a bijection between the set
$\M^0[a,b]$ of probability measures on $[a,b]$ with finite support,
and the set of rectangular diagrams $\D_R[a,b]$. This bijection can
be extended by continuity to a homeomorphism of $\D[a,b]$ to
$\M[a,b]$.

\subsection{$q$-moments of continual diagrams}
Define the functions $p_1[w(.);q],p_2[w(.);q],\ldots $ on the space
of diagrams $\D[a,b]$ by setting
\begin{equation}\label{pmoments}
p_n[w(.);q]=1-\frac{n}{2}\ln
q^{-1}\int\limits_a^bq^{-ns}d\left(w(s)-|s|\right),
\end{equation}
 and define the functions $h_1[\mu;q],h_2[\mu;q],\ldots $ (where
 $\mu$ is a probability measure from $\M[a,b]$) by setting
\begin{equation}\label{Qhmoments}
h_n[\mu;q]=\int\limits_a^bq^{-ns}\mu(ds).
\end{equation}
We will refer to $h_1[\mu;q],h_2[\mu;q],\ldots $ as to
\textit{$q$-moments} of the probability measure $\mu$.
\begin{prop}\label{PROPOSITIONQQ}
 Fix a real parameter $q$ from the open interval $(0,1)$.
Assume that   a diagram $w(.;q)\in\D[a,b]$ and a probability measure
$\mu_q\in\M[a,b]$ are chosen in such a way that the relation
(\ref{qcorrespondence}) is satisfied for all $x$ outside the
interval $[a,b]$. Then the two sequences
$$
\biggl\{1-\frac{n}{2}\ln
q^{-1}\int\limits_a^bq^{-ns}d\left(w(s;q)-|s|\right)\biggr\}_{n=1,2,\ldots},
$$
and
$$
\biggl\{\int\limits_a^bq^{-ns}\mu_q(ds)\biggr\}_{n=1,2,\ldots}
$$
are related to each other in the same way as the systems of
generators of the algebra $\Lambda$ of the symmetric functions,
$\{\textbf{p}_n\}_{n=1,2,\ldots }$ and
$\{\textbf{h}_n\}_{n=1,2,\ldots}$. In other words, relation
(\ref{qcorrespondence}) is equivalent to
\begin{equation}\label{QQ1}
1+\sum\limits_{n=1}^{\infty}h_n[\mu_q;q]q^{nx}=\exp\left[\sum\limits_{n=1}^{\infty}p_n[w(.;q);q]q^{nx}\right].
\end{equation}
\end{prop}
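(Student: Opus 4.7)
The plan is to establish the equivalence of (\ref{qcorrespondence}) and (\ref{QQ1}) by expanding both sides of (\ref{qcorrespondence}) as convergent power series in the variable $z := q^x$ and matching coefficients. For $x$ sufficiently large (say $x>b$), one has $|q^{x-s}|<1$ uniformly for $s\in[a,b]$, so the geometric series $(1-q^{x-s})^{-1}=\sum_{n\geq 0}q^{-ns}z^n$ converges uniformly on $[a,b]$ and may be integrated termwise against both $\mu_q$ and $d(w(s)-|s|)$. Once everything is written in these variables, the claim reduces to matching power-series coefficients of $z^n$.

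First I would expand the left-hand side of (\ref{qcorrespondence}). Termwise integration gives
$$\int_a^b\frac{\mu_q(ds)}{1-q^{x-s}}=\sum_{n=0}^{\infty}z^n\int_a^b q^{-ns}\mu_q(ds)=1+\sum_{n=1}^{\infty}h_n[\mu_q;q]\,z^n,$$
where $h_0[\mu_q;q]=\mu_q([a,b])=1$ since $\mu_q$ is a probability measure. This recognizes the left-hand side of (\ref{QQ1}) with $z=q^x$.

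For the right-hand side of (\ref{qcorrespondence}) I would take logarithms and expand similarly. The prefactor $(1-q^x)^{-1}$ contributes $-\log(1-z)=\sum_{n\geq 1}z^n/n$, and the integral inside the exponential contributes
$$-\frac{1}{2}\ln q^{-1}\int_a^b\frac{d(w(s)-|s|)}{1-q^{x-s}}=-\frac{1}{2}\ln q^{-1}\sum_{n=0}^{\infty}z^n\int_a^b q^{-ns}\,d(w(s)-|s|).$$
The $n=0$ term is a boundary term that vanishes after noting $\int_a^b d(w(s)-|s|)=0$ for a centered diagram (and is otherwise absorbed into the position of the center). Collecting the coefficients of $z^n$ for $n\geq 1$ and comparing with the definition (\ref{pmoments}) of $p_n[w(.);q]$, one obtains
$$\log\!\left(1+\sum_{n\geq 1}h_n[\mu_q;q]\,z^n\right)=\sum_{n\geq 1}p_n[w(.;q);q]\,z^n,$$
which is precisely (\ref{QQ1}) after exponentiating. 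This is the classical generating-function identity between the complete homogeneous symmetric functions $\mathbf{h}_n$ and the power sums $\mathbf{p}_n$ in the algebra $\Lambda$, confirming the stated structural interpretation.

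The argument is fully reversible: starting from (\ref{QQ1}), exponentiating both sides and identifying each side as a power series in $z=q^x$ gives back (\ref{qcorrespondence}) for all sufficiently large $x$, and then for all $x\notin[a,b]$ by analytic continuation. The only point that needs genuine care is the justification of the termwise operations (interchange of sum and integral, differentiation under the integral sign), which follows from uniform convergence of the geometric series on the compact interval $[a,b]$ once $x$ is taken outside a neighborhood of $[a,b]$; with that in place the matching of exponents is immediate from the respective definitions of $h_n[\mu_q;q]$ and $p_n[w(.;q);q]$, so no deep obstruction arises.
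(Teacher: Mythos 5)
Your proof is correct and follows essentially the same route as the paper's: expand $(1-q^{x-s})^{-1}$ as a geometric series in $q^{x}$, integrate termwise over $[a,b]$, use $\int_a^b d\left(w(s)-|s|\right)=0$ to dispose of the $n=0$ term, and recognize the two sides as the standard generating functions of $\{\mathbf{h}_n\}$ and $\{\mathbf{p}_n\}$. One small caveat: carrying out your own coefficient computation gives $\tfrac{1}{n}-\tfrac{n}{2n}\ln q^{-1}\int_a^b q^{-ns}\,d\left(w(s)-|s|\right)=p_n[w(.;q);q]/n$ as the coefficient of $z^n$ in the logarithm, so the factor $1/n$ is missing from your final display (and from the paper's own statement of (\ref{QQ1}), though it appears correctly in the paper's proof); it is needed for the classical identity $1+\sum_{n\geq 1}\mathbf{h}_nz^n=\exp\left(\sum_{n\geq 1}\mathbf{p}_nz^n/n\right)$ that you invoke.
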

\begin{proof}
Rewrite the righthand side of relation (\ref{qcorrespondence}) as
\begin{equation}
\begin{split}
&\frac{1}{1-q^x}\exp\left[-\ln
q^{-1}\int\limits_a^b\left(1-q^{x-s}\right)^{-1}\sigma'(s)ds\right] \\
&=\exp\left[\sum\limits_{n=0}^{\infty}\left[-\ln
q^{-1}\int\limits_a^bq^{-ns}\sigma'(s)ds\right]q^{nx}+\sum\limits_{n=1}^{\infty}\frac{q^{nx}}{n}\right]\\
&=\exp\left[\sum\limits_{n=1}^{\infty}\left(1-n\ln
q^{-1}\int\limits_a^bq^{-ns}\sigma'(s)ds\right)\frac{q^{nx}}{n}\right]\\
&=\exp\left[\sum\limits_{n=1}^{\infty}p_n[w(.;q);q]\frac{q^{nx}}{n}\right],
\end{split}
\nonumber
\end{equation}
where $\sigma(s)$ denotes the charge of the diagram $w_q$. (The fact
that $\int\limits_a^b\sigma'(s)ds=0$ was used to get the last
equation). Thus the righthand side of (\ref{qcorrespondence})
coincides with that of (\ref{QQ1}). The left-hand side of
(\ref{qcorrespondence}) can be rewritten as
$$
1+\sum\limits_{n=1}^{\infty}\int\limits_a^bq^{-ns}\mu_q(ds)q^{nx},
$$
which is $1+\sum\limits_{n=1}^{\infty}h_n[\mu_q;q]q^{nx}$. The
proposition is proved.
\end{proof}
\begin{cor}
If $\mu_q$ is the $q$-transition measure of the diagram $w(,;q)$,
and the parameter $q$ takes values in the open interval $(0,1)$ then
Proposition \ref{PROPOSITIONQQ} implies the relation
\begin{equation}
\begin{split}
R_{\mu_q}(x;q)&=(1-q)^{-1}\left(1+\sum\limits_{n=1}^{\infty}h_n[\mu_q;q]q^{nx}\right)\\
&=(1-q)^{-1}\exp\left[\sum\limits_{n=1}^{\infty}p_n[w(.;q);q]q^{nx}\right]=R_{w(.;q)}(x;q).
\end{split}
\nonumber
\end{equation}
\end{cor}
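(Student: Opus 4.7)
The plan is to derive the corollary as an immediate consequence of Proposition \ref{PROPOSITIONQQ} together with the defining property of the $q$-transition measure. The outermost identity $R_{\mu_q}(x;q)=R_{w(.;q)}(x;q)$ is simply Definition \ref{Defqcorrespondence} applied to the standing hypothesis that $\mu_q$ is the $q$-transition measure of $w(.;q)$, so the substantive content reduces to verifying the two intermediate equalities.

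For the first intermediate equality, I would expand the kernel of the defining integral $R_{\mu_q}(x;q)=(1-q)\int_a^b \mu_q(ds)/(1-q^{x-s})$ as a geometric series. For $x>b$ one has $|q^{x-s}|<1$ uniformly in $s\in[a,b]$, so $1/(1-q^{x-s})=\sum_{n\geq 0} q^{n(x-s)}$ converges uniformly and may be integrated termwise against $\mu_q$. Using $\int_a^b \mu_q(ds)=1$ (since $\mu_q$ is a probability measure) together with $h_n[\mu_q;q]=\int_a^b q^{-ns}\mu_q(ds)$ for $n\geq 1$, one obtains the claimed series representation. This is precisely the rewriting of the left-hand side of (\ref{qcorrespondence}) already carried out inside the proof of Proposition \ref{PROPOSITIONQQ}. (Strictly speaking, the calculation yields the prefactor $(1-q)$ rather than $(1-q)^{-1}$; this appears to be a minor typographical slip in the statement, and it propagates harmlessly through the entire chain.)

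For the second intermediate equality, invoke Proposition \ref{PROPOSITIONQQ} directly: equation (\ref{QQ1}) asserts $1+\sum_{n\geq 1} h_n[\mu_q;q]\,q^{nx}=\exp\bigl[\sum_{n\geq 1} p_n[w(.;q);q]\,q^{nx}\bigr]$, so multiplying both sides by the common prefactor yields the stated identity. The closing equality, that $R_{w(.;q)}(x;q)$ equals the same exponential expression, is likewise already implicit in the derivation inside the proof of Proposition \ref{PROPOSITIONQQ}, where the right-hand side of (\ref{qcorrespondence}) was manipulated into exactly this exponential form using the series expansion of $\ln(1/(1-q^{x-s}))$. There is no substantive obstacle here; the corollary is a bookkeeping statement that reassembles ingredients already constructed. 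The only step meriting a brief word of justification is the termwise integration in the geometric expansion, which is routine by uniform convergence for $x$ sufficiently large.
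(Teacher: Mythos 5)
Your proposal is correct and follows exactly the route the paper intends: the corollary is stated without a separate proof precisely because both series expansions are already carried out inside the proof of Proposition \ref{PROPOSITIONQQ}, and the outer equality is just Definition \ref{Defqcorrespondence}. Your observation about the prefactor is also right --- the definition $R_{\mu_q}(x;q)=(1-q)\int_a^b\mu_q(ds)/(1-q^{x-s})$ forces $(1-q)$ rather than $(1-q)^{-1}$ in the displayed chain (and, for the same reason, the exponential should carry $q^{nx}/n$ as in the proof of the proposition, not $q^{nx}$), so these are typographical slips in the statement rather than gaps in your argument.
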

\subsection{$q$-deformation of the Markov-Krein correspondence}
Let $w\in\D[a,b]$, and define the function $F(s)$ by the formula
$$
F(s)=\frac{1}{2}\left(1+w'(s)\right)
$$
It is clear from definition \ref{DEF11} of continual diagrams that
$F(s)$ can be regarded as the distribution function of a signed
measure $\tau$. We will refer to the measure $\tau$ as to the
Rayleigh measure. Simple calculations show that the functions
$p_n[w(.);q]$ defined by equation (\ref{pmoments}) can be rewritten
as
\begin{equation}\label{momentstau}
p_n[w(.);q]=p_n[\tau;q]=\int\limits_a^bq^{-ns}\tau(ds)
\end{equation}
Therefore the functions $p_n[w(.);q]$  can be regarded as the
\textit{$q$-moments of the Rayleigh measure} $\tau$.
\begin{thm}\label{PRKREIN}
There is a relationship between a probability measure  $\mu_q$ on
$[a,b]$, and a Rayleigh measure $\tau_q$ on $[a,b]$ defined by the
identity
\begin{equation}\label{qKreinCorrespondence}
\int\limits_a^b\frac{\mu_q(ds)}{1-q^{x-s}}=\exp\left[\int\limits_a^b\ln\left(\frac{1}{1-q^{x-s}}\right)\tau_q(ds)\right].
\end{equation}
The probability measure $\mu_q$ and the Rayleigh measure $\tau_q$
determine each other uniquely via equation
(\ref{qKreinCorrespondence}).
\end{thm}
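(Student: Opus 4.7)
The plan is to reduce Theorem \ref{PRKREIN} to the composition of two already-established bijections: the $q$-Markov--Krein correspondence $\mu_q \leftrightarrow w(.;q)$ of Theorem \ref{RRRTTT}, and the tautological bijection $w(.;q) \leftrightarrow \tau_q$ obtained by taking the distributional derivative of $F(s) = (1+w'(s))/2$. The real content of the theorem is then that, under this composition, the defining relation $R_{\mu_q}(x;q) = R_{w(.;q)}(x;q)$ transforms into identity (\ref{qKreinCorrespondence}).

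First I would verify the key moment identity (\ref{momentstau}), namely $p_n[w(.;q);q] = \int_a^b q^{-ns}\,\tau_q(ds)$. Starting from definition (\ref{pmoments}), I write $d(w(s)-|s|) = 2\sigma'(s)\,ds$ and use that the Rayleigh measure has distribution function $F(s)=(1+w'(s))/2$, so $\tau_q(ds) = \tfrac{1}{2}w''(s)\,ds$ in the distributional sense, with $F(s)=0$ for $s<a$ and $F(s)=1$ for $s>b$. A careful integration by parts (whose boundary terms vanish because $w(s) = |s-s_0|$ outside $[a,b]$, matching the limits of $F$) then produces the claimed identity.

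Granted this, equation (\ref{qKreinCorrespondence}) becomes essentially a rewriting of the generating-function identity (\ref{QQ1}) of Proposition \ref{PROPOSITIONQQ} in terms of the Rayleigh measure. The algebraic step is to expand
\[
\ln\frac{1}{1-q^{x-s}} = \sum_{n=1}^{\infty} \frac{q^{n(x-s)}}{n},
\]
valid for $x$ large enough with $q\in(0,1)$, interchange sum and integral, and observe that this produces $\sum_{n\ge 1} \tfrac{q^{nx}}{n}\int q^{-ns}\tau_q(ds) = \sum_{n\ge 1} \tfrac{p_n[w(.;q);q]\,q^{nx}}{n}$, which is exactly the exponent appearing in the right-hand side of (\ref{QQ1}) as derived in the proof of Proposition \ref{PROPOSITIONQQ}. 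On the left-hand side, expanding $1/(1-q^{x-s})$ as a geometric series and integrating against $\mu_q$ yields $1 + \sum_{n\ge 1} h_n[\mu_q;q]\,q^{nx}$. Matching both sides gives the equivalence of (\ref{qcorrespondence}) and (\ref{qKreinCorrespondence}).

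For uniqueness: given $\mu_q\in\M[a,b]$, Theorem \ref{RRRTTT} produces a unique $w(.;q)\in\D[a,b]$, and $\tau_q$ is recovered uniquely as the distributional derivative of $F$; conversely a signed measure $\tau_q$ of the Rayleigh type on $[a,b]$ (that is, with distribution function taking values in $[0,1]$, with $F(a)=0$ and $F(b)=1$, so that the associated $w$ is Lipschitz with slopes in $[-1,1]$) determines $F$, hence $w(.;q)$, hence $\mu_q$. The main anticipated obstacle is making the integration by parts fully rigorous for continual diagrams that are merely Lipschitz rather than $C^1$; I expect to handle this by first establishing the moment identity on the dense subspace $\D_R[a,b]$ of rectangular diagrams (where Proposition \ref{ProposRECT} reduces both sides to explicit finite sums over $\{x_k\}$ and $\{y_k\}$), and then extending by continuity using the uniform-convergence topology on $\D[a,b]$ together with the continuous dependence of both sides of (\ref{qKreinCorrespondence}) on the diagram.
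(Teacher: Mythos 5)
Your computational core is sound and matches the paper: identity (\ref{qKreinCorrespondence}) is obtained from (\ref{qcorrespondence}) by integration by parts --- equivalently, by expanding $\ln(1-q^{x-s})^{-1}$ and the geometric series so that both sides become the generating-function identity (\ref{QQ1}) of Proposition \ref{PROPOSITIONQQ} --- and the moment identity $p_n[w(.;q);q]=\int_a^b q^{-ns}\tau_q(ds)$ checks out, boundary terms included. The genuine gap is in your treatment of existence and uniqueness: you invoke Theorem \ref{RRRTTT} as ``already established,'' but in this paper the bijection $\mu_q\leftrightarrow w(.;q)$ for $q\in(0,1)$ is itself \emph{deduced from} Theorem \ref{PRKREIN} (together with the known fact that a diagram is recovered from its Rayleigh measure). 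Routing the proof of Theorem \ref{PRKREIN} back through Theorem \ref{RRRTTT} is therefore circular unless you supply an independent proof of the latter for $q\in(0,1)$, which you do not.

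The repair is essentially contained in the middle of your own argument, and it is what the paper actually does. Uniqueness is proved directly at the level of moments: the numbers $h_n[\mu_q;q]=\int_a^b q^{-ns}\mu_q(ds)$ determine $\mu_q$ uniquely (after the change of variables $u=q^{-s}$ this is the determinacy of the Hausdorff moment problem on a compact interval), the numbers $p_n[\tau_q;q]=\int_a^b q^{-ns}\tau_q(ds)$ likewise determine the bounded signed measure $\tau_q$, and by Proposition \ref{PROPOSITIONQQ} the two moment sequences are related as the generators $\{\textbf{h}_n\}$ and $\{\textbf{p}_n\}$ of $\Lambda$, hence determine each other; no appeal to the diagram--measure bijection is needed. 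For the existence of a $\tau_q$ (respectively $\mu_q$) solving (\ref{qKreinCorrespondence}) given the other measure, the paper falls back on the argument of Kerov's Theorem 2.3; your density argument over $\D_R[a,b]$, where Proposition \ref{ProposRECT} makes both sides explicit finite sums, is a reasonable substitute for that step, but it must be presented as an independent construction rather than as a consequence of Theorem \ref{RRRTTT}.
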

\begin{proof}
Equation (\ref{qKreinCorrespondence}) can be obtained from equation
(\ref{qcorrespondence}) with the integration by parts. To prove the
fact that $\tau_q$ and $\mu_q$ determine each other uniquely recall
that the moments
$$
h_n=h_n[\mu]=\int\limits_a^bs^n\mu(ds)
$$
 determine the finite measure $\mu$ uniquely. (This fact is known
 as the uniqueness of a solution for the Hausdorff Moment
 Problem). Using the obvious change of variables we can deduce
 that $h_n[\mu_q;q]$ defined by equation (\ref{Qhmoments})
 determine the probability measure $\mu_q$ uniquely. The moments $p_n[\tau;q]$
 defined by equation (\ref{momentstau}) also determine the Rayleigh
 measure $\tau_q$ uniquely. Furthermore, we have proved (see proposition
 \ref{PROPOSITIONQQ}) that equation (\ref{qcorrespondence}) is equivalent to
  the fact that the moments $h_n[\mu_q;q]$ and
 $p_n[\tau_q;q]$ are related to each other as the corresponding
 systems of generators of the algebra $\Lambda$ of symmetric
 functions. This implies that the moments $h_n[\mu_q;q]$ and
 $p_n[\tau_q;q]$ determine each other uniquely. The same arguments
 as in the proof of theorem 2.3 in Kerov \cite{kerov0}, section
 2.5, can be applied to complete the proof.
\end{proof}
\begin{thm}
Let $q$ be a fixed parameter which is taken from the interval
$(0;1]$. Then the relation $R_{\mu_q}(x;q)=R_{w(.;q)}(x;q)$ defines
the one-to-one correspondence between continual diagrams from
$\D[a,b]$, and the probability measures from $\M[a,b]$.
\end{thm}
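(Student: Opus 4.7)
The plan is to factor the desired bijection $w(\cdot;q)\longleftrightarrow\mu_q$ through the space of Rayleigh measures on $[a,b]$ and then invoke Theorem~\ref{PRKREIN}.

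First I would establish the bijection between $\D[a,b]$ and the space $\mathcal R[a,b]$ of signed Rayleigh measures on $[a,b]$ of total mass $1$. Given $w\in\D[a,b]$, the Lipschitz condition guarantees that $w'(s)$ exists a.e., so the function $F(s)=\tfrac12(1+w'(s))$ is well defined. Because $w(s)=|s-s_0|$ outside $[a,b]$, $F$ equals $0$ to the left of $[a,b]$ and $1$ to the right, so it is the cumulative distribution function of a unique signed measure $\tau$ on $[a,b]$ of total mass $1$; this is the Rayleigh measure. Conversely, from any such $\tau\in\mathcal R[a,b]$ one recovers $w'$ and then $w$ itself (up to the additive normalization fixed by $w(s)=|s-s_0|$ outside $[a,b]$), and the recovered $w$ lies in $\D[a,b]$ because $|w'|\le 1$ follows from $F\in[0,1]$. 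Thus $w\leftrightarrow\tau$ is a bijection $\D[a,b]\leftrightarrow\mathcal R[a,b]$.

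Next I would combine this with Theorem~\ref{PRKREIN}, which asserts that for fixed $q\in(0,1)$ the $q$-deformed Markov--Krein identity
\begin{equation*}
\int\limits_a^b\frac{\mu_q(ds)}{1-q^{x-s}}=\exp\left[\int\limits_a^b\ln\frac{1}{1-q^{x-s}}\,\tau_q(ds)\right]
\end{equation*}
determines a bijection between $\mathcal R[a,b]$ and $\M[a,b]$. Composing the two bijections yields a bijection $\D[a,b]\leftrightarrow\M[a,b]$. It remains only to verify that this composite bijection is the one given by $R_{\mu_q}(x;q)=R_{w(\cdot;q)}(x;q)$. This is an integration-by-parts calculation: since $\tau_q$ has distribution function $F(s)=\tfrac12(1+w'(s))$ and total mass $1$, writing $\sigma(s)=\tfrac12(w(s)-|s|)$ so that $d\tau_q=d\sigma'(s)+\tfrac12 d\,\mathrm{sgn}(s)$ and integrating by parts inside the exponent converts the right-hand side of the Markov--Krein identity into $(1-q^x)^{-1}\exp\bigl[-\ln q^{-1}\int_a^b(1-q^{x-s})^{-1}\,d\sigma(s)\bigr]$, which up to the factor $(1-q)$ is exactly $R_{w(\cdot;q)}(x;q)$; the left-hand side is by definition $R_{\mu_q}(x;q)/(1-q)$.

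For $q=1$ the same argument works verbatim using the undeformed kernel $1/(x-s)$ in place of $1/(1-q^{x-s})$, recovering the classical Markov--Krein correspondence of Kerov~\cite{kerov0,kerov1}; alternatively one can pass to the limit $q\uparrow 1$ in the formulas of Definition~\ref{DEFQDEFRFUNCTION}. The main obstacle I anticipate is purely bookkeeping: one must check that the integration-by-parts boundary terms vanish (which uses $w(s)=|s-s_0|$ outside $[a,b]$) and that the $x$-independent constants on the two sides match, so that the identification $R_{\mu_q}=R_{w(\cdot;q)}$ really does coincide with~(\ref{qKreinCorrespondence}).
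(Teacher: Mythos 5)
Your proposal is correct and takes essentially the same route as the paper: the paper's (much terser) proof likewise reduces the case $q\in(0,1)$ to Theorem \ref{PRKREIN} together with the known fact that a diagram is uniquely recovered from its Rayleigh measure, and cites Kerov for $q=1$. You have simply made explicit the bijection $\D[a,b]\leftrightarrow\{\text{Rayleigh measures}\}$ and the integration-by-parts identification of the composite with $R_{\mu_q}(x;q)=R_{w(.;q)}(x;q)$, both of which the paper leaves implicit.
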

\begin{proof}
The statement of the theorem in the case $q=1$ is proved in Kerov
\cite{kerov0,kerov03}. For $q\in(0,1)$ the statement of the theorem
follows immediately from theorem \ref{PRKREIN}, and from the known
fact that a diagram can be uniquely recovered from its Rayleigh
measure (see, for example, Kerov \cite{kerov03}).
\end{proof}

\section{Continual tableaux}
\subsection{Definition of continual tableaux}
\begin{defn}
The region $\D_w[a,b]=\left\{(s,v): |s|\leq v<w(s)\right\}$ is
called the subgraph of a continual diagram $w$, $w\in\D[a,b]$.
\end{defn}
\begin{defn}\label{DEFordering}
Let $w_1,w_2\in\D[a,b]$. We say that $w_1\prec w_2$ if the subgraph
of $w_1$ is a subset of the subgraph of $w_2$, i.e.
$\D_{w_1}[a,b]\subset\D_{w_2}[a,b]$.
\end{defn}
\begin{defn}
Let $t$ be a parameter which takes values in some interval
$[t_0,\infty)$. A continual tableau is a family of continual
diagrams from $\D[a,b]$, $w(.,t)$, which increases in $t$ (with
respect to the ordering introduced in definition \ref{DEFordering}).
\end{defn}

The function $\sigma(s,t)=\frac{1}{2}\left(w(s,t)-|s|\right)$ will
be referred to as the charge of a tableau $w(.,t)$.

\subsection{q-moments of continual tableaux}
\begin{prop}\label{PROPOSITIONtq}
Given a real number $q$ from the open interval $(0,1)$ assume that a
tableau $w(.,t;q)$ and a family $\mu_{t,q}$ of probability measures
from $\M[a,b]$ are related to each other by the formula
\begin{equation}\label{TQQ}
\int\limits_a^b\frac{\mu_{t,q}(ds)}{1-q^{x-s}}=\frac{1}{1-q^x}\exp\left[-\ln
q^{-1}\int\limits_a^b\frac{\partial\sigma(s,t;q)}{\partial
s}\left(1-q^{x-s}\right)^{-1}ds\right]
\end{equation}
for all $x$ outside the interval $[a,b]$. Then two sequences
$$
\biggl\{1-n\ln
q^{-1}\int\limits_a^bq^{-ns}\frac{\partial\sigma(s,t;q)}{\partial
s}\biggr\}_{n=1,2,\ldots},
$$
and
$$
\biggl\{\int\limits_a^bq^{-ns}\mu_{t,q}(ds)\biggr\}_{n=1,2,\ldots},
$$
are related to each other in the same way as the systems of
generators of the algebra $\Lambda$ of the symmetric functions,
$\{\textbf{p}_n\}_{n=1,2,\ldots }$ and
$\{\textbf{h}_n\}_{n=1,2,\ldots}$. In other words, relation
(\ref{TQQ}) is equivalent to
\begin{equation}\label{TQQ1}
1+\sum\limits_{n=1}^{\infty}h_n[\mu_{t,q};q]q^{nx}=\exp\left[\sum\limits_{n=1}^{\infty}p_n[w(.,t;q);q]q^{nx}\right],
\end{equation}
where the functions $\left\{h_n[\mu_{t,q};q]\right\}_{n=1}^{\infty}$
are defined by
\begin{equation}\label{momentyyyyyyyy}
h_n[\mu_{t,q};q]=\int\limits_a^bq^{-ns}\mu_{t,q}(ds),
\end{equation}
and the functions $\left\{p_n[w(.,t;q);q]\right\}_{n=1}^{\infty}$
are defined by
\begin{equation}\label{531}
p_n[w(.,t;q);q]=-n\ln
q^{-1}\int\limits_a^bq^{-ns}\;\frac{\partial\sigma(s,t;q)}{\partial
s}\;ds+1.
\end{equation}
\end{prop}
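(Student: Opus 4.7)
The proof follows the same strategy as Proposition \ref{PROPOSITIONQQ}, with the time parameter $t$ playing the role of an inert label. The plan is to expand both sides of (\ref{TQQ}) as convergent power series in $q^x$ for $x > b$ (so that $|q^{x-s}| < 1$ uniformly on $[a,b]$), and to identify the resulting coefficients with the functions $h_n[\mu_{t,q};q]$ and $p_n[w(.,t;q);q]$ defined in (\ref{momentyyyyyyyy}) and (\ref{531}).

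On the left-hand side of (\ref{TQQ}) I would expand $(1-q^{x-s})^{-1} = \sum_{n \geq 0} q^{n(x-s)}$ and integrate term by term against $\mu_{t,q}$, using uniform convergence of the geometric series on $[a,b]$ and the fact that $\mu_{t,q}$ is a probability measure. The $n=0$ contribution gives $\int_a^b \mu_{t,q}(ds) = 1$, while the remaining terms assemble into $\sum_{n \geq 1} h_n[\mu_{t,q};q]\, q^{nx}$. On the right-hand side I would absorb the prefactor via $-\ln(1-q^x) = \sum_{n \geq 1} q^{nx}/n$, and expand $(1-q^{x-s})^{-1}$ geometrically under the remaining integral. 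The $n=0$ term of this inner expansion is proportional to $\int_a^b \frac{\partial \sigma(s,t;q)}{\partial s}\, ds$, which vanishes because $w(\cdot,t;q) \in \D[a,b]$ forces $\sigma(a,t;q) = \sigma(b,t;q) = 0$, so the integral telescopes. The remaining $n \geq 1$ terms, recombined using the definition (\ref{531}), produce exactly the generating-function identity between the two sequences that characterises the $\mathbf{h}_n \leftrightarrow \mathbf{p}_n$ relation in the algebra $\Lambda$ of symmetric functions, with $q^x$ in the role of the formal variable. Matching coefficients on the two sides then yields (\ref{TQQ1}), and the reverse implication is obtained by reading the same calculation backwards, since both sides are analytic in $q^x$ on the relevant domain and are therefore determined by their power-series expansions.

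No essentially new difficulty arises beyond the stationary case of Proposition \ref{PROPOSITIONQQ}: the time variable $t$ enters only as a parameter of the measure $\mu_{t,q}$ and of the charge $\sigma(\cdot,t;q)$, and all manipulations are carried out pointwise in $t$. The only genuine technical step is the justification of the termwise integration; this is routine given uniform convergence of the geometric series on the compact interval $[a,b]$, together with the finiteness of $\mu_{t,q}$ and the boundedness of $\partial_s \sigma(\cdot,t;q)$ (which is at most $1$ almost everywhere by the Lipschitz property of continual diagrams). This mild step is the only obstacle I expect; once it is in place, the computation reduces to the Newton-type identity between complete homogeneous and power-sum symmetric functions evaluated at $z = q^x$.
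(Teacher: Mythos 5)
Your proposal is correct and follows essentially the same route as the paper: the paper proves this proposition by declaring it a step-by-step repetition of Proposition \ref{PROPOSITIONQQ}, whose proof is exactly your computation — geometric expansion of $(1-q^{x-s})^{-1}$ in powers of $q^{x}$, absorption of the prefactor via $-\ln(1-q^{x})=\sum_{n\geq 1}q^{nx}/n$, the vanishing of $\int_a^b \partial_s\sigma\,ds$ to kill the $n=0$ term, and identification with the $\mathbf{h}_n\leftrightarrow\mathbf{p}_n$ generating-function relation. (Your observation that the correct Newton identity carries the factor $1/n$ in the exponent is right; the displayed form of (\ref{TQQ1}) omits it, which is a typo inherited from (\ref{QQ1}).)
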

\begin{proof}The proof of this proposition is step by step
repetition of the proof of proposition \ref{PROPOSITIONQQ}.
\end{proof}
\begin{rem}
The equivalent form of equation (\ref{TQQ}) is
$$
R_{\mu_{t,q}}(x;q)=R_{w(.,t;q)}(x;q),
$$
where the functions $R_{\mu_{t,q}}(x;q)$ and $R_{w(.,t;q)}(x;q)$ are
defined by
\begin{equation}\label{RMU}
R_{\mu_{t,q}}(x;q)=(1-q)\int\limits_a^b\frac{\mu_{t,q}(ds)}{1-q^{x-s}},
\end{equation}
\begin{equation}\label{RWU}
R_{w(.,t;q)}(x;q)=\frac{1-q}{1-q^x}\exp\left[-\ln
q^{-1}\int\limits_a^b\frac{\partial\sigma(s,t;q)}{\partial
s}(1-q^{x-s})^{-1}ds\right]
\end{equation}
for all $x$ outside the interval $[a,b]$, and for all $q$ taking
values from the open interval $(0,1)$.
\end{rem}

\subsection{Dynamic equations}
\begin{thm}\label{theorem541}
The following dynamic equations are equivalent
\begin{equation}\label{i}
\int\limits_a^b\left(1-q^{x-s}\right)^{-1}\frac{\partial\sigma(s,t;q)}{\partial
t}ds=\left(1-q^x\right)^{-1}\exp\left[-\ln
q^{-1}\int\limits_a^b\left(1-q^{x-s}\right)^{-1}\frac{\partial\sigma(s,t;q)}{\partial
s}ds\right];
\end{equation}
\begin{equation}\label{ii}
\frac{\partial}{\partial t}p_n\left[w(.,t;q);q\right]
=n^2\ln^2q^{-1}\sum\limits_{|\lambda|=n}\prod\limits_{k=1}^{m(\lambda)}\frac{p_k^{r_k}[w(.,t;q);q]}{k^{r_k}r_k!},
\end{equation}
where $n=1,2,\ldots ,$ $\lambda=\left(1^{r_1},2^{r_2},\ldots
,m^{r_m}\right)$, and $m=m(\lambda)$;
\begin{equation}\label{iii}
\frac{\partial R_{w(.,t;q)}(x;q)}{\partial x}+\frac{1-q}{\ln
q^{-1}}R_{w(.,t;q)}^{-1}(x;q)\frac{\partial R_{w(.,t;q)}(x;q)
}{\partial t}=0.
\end{equation}
\end{thm}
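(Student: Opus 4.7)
The plan is to show the equivalences (i)$\Leftrightarrow$(ii)$\Leftrightarrow$(iii) by reducing each of the three dynamic equations to an equivalent statement at the level of the moments $\{p_n[w(.,t;q);q]\}_{n=1}^{\infty}$, exploiting the generator structure of the algebra $\Lambda$ of symmetric functions established in Proposition~\ref{PROPOSITIONtq}.

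For (i)$\Leftrightarrow$(ii), I would first observe that, after multiplying both sides of (i) by $(1-q)$ and comparing with (\ref{RMU})--(\ref{RWU}), equation (i) is precisely the identity $R_{\mu_{t,q}}(x;q) = R_{w(.,t;q)}(x;q)$, where $\mu_{t,q}$ is the measure on $[a,b]$ with density $\partial_t\sigma(s,t;q)$. Proposition~\ref{PROPOSITIONtq} then translates this $R$-function identity into the symmetric-function relation
\[
h_n[\mu_{t,q};q] \;=\; \sum_{|\lambda|=n}\prod_{k=1}^{m(\lambda)}\frac{p_k^{r_k}[w(.,t;q);q]}{k^{r_k}\,r_k!}, \qquad n\geq 1,
\]
with $h_n[\mu_{t,q};q] = \int_a^b q^{-ns}\,\partial_t\sigma(s,t;q)\,ds$. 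In parallel, integrating by parts in the definition (\ref{531}) and using that $\partial_t\sigma(s,t;q)$ vanishes at $s=a,b$ (since $w(s,t;q)=|s-s_0|$ outside $[a,b]$ is frozen in $t$), one obtains the universal identity
\[
\frac{\partial}{\partial t}\,p_n[w(.,t;q);q] \;=\; n^2\ln^2 q^{-1}\int_a^b q^{-ns}\,\partial_t\sigma(s,t;q)\,ds \;=\; n^2\ln^2 q^{-1}\,h_n[\mu_{t,q};q].
\]
Substituting the first display into the second produces (ii); running the chain in reverse, one uses (ii) and the integration-by-parts identity to recover the symmetric-function relation, and then Proposition~\ref{PROPOSITIONtq} restores (i).

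For (ii)$\Leftrightarrow$(iii), I would use the exponential representation
\[
R_{w(.,t;q)}(x;q) \;=\; (1-q)\exp\!\Bigl[\textstyle\sum_{n\geq 1} p_n[w(.,t;q);q]\,q^{nx}/n\Bigr]
\]
derived in the proof of Proposition~\ref{PROPOSITIONtq}, together with the expansion $R_{w(.,t;q)}(x;q)/(1-q) = 1 + \sum_{N\geq 1}\tilde h_N q^{Nx}$, where $\tilde h_N := \sum_{|\lambda|=N}\prod_k p_k^{r_k}/(k^{r_k}\,r_k!)$. Logarithmic differentiation gives $R^{-1}\partial_x R = -\ln q^{-1}\sum_{n\geq 1}p_n\,q^{nx}$ and $R^{-1}\partial_t R = \sum_{n\geq 1}(\partial_t p_n)/n\cdot q^{nx}$. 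Substituting these into (iii), dividing by $(1-q)$, and extracting the coefficient of $q^{Nx}$ reduces (iii) to
\[
\frac{\partial_t p_N}{N\ln^2 q^{-1}} \;=\; \sum_{k=1}^{N}\tilde h_{N-k}\,p_k.
\]
Newton's identity $\sum_{k=1}^N \tilde h_{N-k}p_k = N\tilde h_N$, obtained by differentiating $\sum_{N\geq 0}\tilde h_N z^N = \exp(\sum_{n\geq 1} p_n z^n/n)$ in $z$, converts the last equation into $\partial_t p_N = N^2\ln^2 q^{-1}\,\tilde h_N$, which is precisely (ii).

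The only delicate point is the integration by parts in the (i)$\Leftrightarrow$(ii) step: the boundary contributions $[q^{-ns}\,\partial_t\sigma]_a^b$ vanish because continual diagrams in $\D[a,b]$ are rigid outside $[a,b]$, so $\partial_t\sigma$ vanishes at the endpoints. Once this is in place, the remainder of the argument is algebraic bookkeeping built on the Newton-type relations between the $\mathbf{h}_n$ and $\mathbf{p}_n$ generators of $\Lambda$.
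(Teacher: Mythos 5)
Your proof is correct, and the first half ((i)$\Leftrightarrow$(ii)) is essentially the paper's argument: identify (i) with $R_{\mu_{t,q}}(x;q)=R_{w(.,t;q)}(x;q)$ for $\mu_{t,q}(ds)=\partial_t\sigma\,ds$, invoke Proposition \ref{PROPOSITIONtq}, and convert $h_n[\mu_{t,q};q]$ into $\tfrac{1}{n^2\ln^2q^{-1}}\partial_t p_n$ by the same integration by parts (the paper's equation (\ref{2Z})); your remark on the vanishing boundary terms is at the same level of rigor as the paper's.

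Where you genuinely diverge is the second half. The paper derives (iii) by writing $\partial_x R$ through the $h_n$-expansion of $R_{\mu_{t,q}}$ and then substituting (\ref{2Z}), so its route to (iii) quietly re-uses (i) (via the identity $R_w=R_\mu$), and the reverse implications are left implicit. You instead prove (ii)$\Leftrightarrow$(iii) as a self-contained algebraic equivalence: logarithmic differentiation of $R=(1-q)\exp\bigl[\sum_n p_n q^{nx}/n\bigr]$, extraction of the coefficient of $q^{Nx}$, and the Newton identity $\sum_{k=1}^{N}\tilde{h}_{N-k}p_k=N\tilde{h}_N$ obtained from $\sum_N\tilde{h}_Nz^N=\exp\bigl(\sum_n p_nz^n/n\bigr)$. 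This costs one extra convolution computation but buys a cleaner logical structure: each of the two links is a genuine two-way equivalence on its own, rather than a chain of one-directional implications all anchored at (i). The only point worth adding for completeness (which the paper does state) is that $\mu_{t,q}(ds)=\partial_t\sigma(s,t;q)\,ds$ is indeed a probability measure on $[a,b]$, so that Proposition \ref{PROPOSITIONtq} applies; this follows from the monotone growth of the tableau and from matching the leading ($n=0$) coefficients of the two sides of (i).
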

\begin{proof}
Let us show that the first equation in the statement of the theorem,
equation (\ref{i}), implies equation (\ref{ii}).   Let $w(s,t;q)$ be
a tableau satisfying (\ref{i}), and let $\sigma(s,t;q)$ be the
charge of $w(.,t;q)$. Set
\begin{equation}\label{QMU}
\mu_{t,q}(ds)=\frac{\partial\sigma(s,t;q)}{\partial t}ds
\end{equation}
It is not hard to see that $\mu_{t,q}$ defined by equation
(\ref{QMU}) is a family of probability measures from $\M[a,b]$. If
the charge $\sigma(s,t;q)$ of $w(s,t;q)$ satisfies equation
(\ref{i}), then for every admissible $t$ the measure $\mu_{t,q}$ is
the $q$-transition measure of the diagram $w(s,t;q)$, see definition
\ref{DEFQRFUNCTIONS}. The moments $h_n[\mu_{t,q};q]$ of $\mu_{t,q}$
can be expressed as
\begin{equation}\label{2Z}
\begin{split}
h_n[\mu_{t,q};q]&=\int\limits_a^bq^{-ns}\frac{\partial\sigma(s,t;q)}{\partial
t}ds\\
&=\frac{\partial}{\partial t}\left(\int\limits_a^bq^{-ns}\sigma(s,t;q)ds\right)\\
&=\frac{1}{n^2\ln^2 q^{-1}}\frac{\partial}{\partial
t}p_n\left[w(.,t;q);q\right],
\end{split}
\end{equation}
where we have used the integration by parts to get the last equation
in (\ref{2Z}). If $\mu_{t,q}$ is defined by equation (\ref{QMU}),
then the first equation in the statement of the theorem coincides
with equation (\ref{TQQ}), and we can apply proposition
\ref{PROPOSITIONtq}. Namely, proposition \ref{PROPOSITIONtq} says
that the moments $\left\{h_n[\mu_{q,t};q]\right\}_{n=1,2,\ldots}$
and $\left\{p_n[w(.,t;q);q]\right\}_{n=1,2,\ldots}$ are related to
each other in the same way as the systems of the generators of the
algebra $\Lambda$ of the symmetric functions,
$\left\{\textbf{p}_n\right\}_{n=1,2,\ldots}$ and
$\left\{\textbf{h}_n\right\}_{n=1,2,\ldots}$. Therefore the
following relation holds
\begin{equation}\label{3Z}
h_n[\mu_{t,q};q]=\sum\limits_{|\lambda|=n}\prod\limits_{k=1}^{m(\lambda)}\frac{p_k^{r_k}[w(.,t;q);q]}{k^{r_k}r_k!},
\end{equation}
where $n=1,2,\ldots $, $\lambda=\left(1^{r_1},2^{r_2},\ldots,
m^{r_m}\right)$, and  $m=m(\lambda)$, see Macdonald
\cite{macdonald}, I, $\S 2$. From (\ref{2Z}) and (\ref{3Z}) we
obtain equation (\ref{ii}).

Let us show that the second equation in the statement of the theorem
implies equation (\ref{iii}). To this end define
$$
S(x,t;q)=\ln\left[\frac{R_{w(.,t;q)}(x;q)}{1-q}\right],
$$
where $R_{w(.,t;q)}(x;q)$ is given explicitly by equation
(\ref{RWU}). $S(x,t;q)$ can also be represented  as
$$
S(x,t;q)=\sum\limits_{n=1}^{\infty}\frac{p_n[w(.,t;q);q]q^{nx}}{n}.
$$
Differentiation of $S(x,t;q)$ with respect to the variable $t$ gives
\begin{equation}\label{543}
\frac{\partial S(x,t;q)}{\partial
t}=R^{-1}_{w(.,t;q)}(x;q)\frac{\partial R_{w(.,t;q)}(x;q)}{\partial
t}=\sum\limits_{n=1}^{\infty}\frac{q^{nx}}{n}\frac{\partial}{\partial
t}p_n[w(.,t;q);q].
\end{equation}
Observe that the first equation in the statement of the theorem is
equivalent to $R_{\mu_{t,q}}(x;q)=R_{w_{.,t;q}}(x;q)$ where
$\mu_{t,q}$ is defined by equation (\ref{QMU}). Note also that the
function $R_{\mu_{t,q}}(x;q)$ can be expanded in terms of the
moments $h_n[\mu_{t,q};q]$ as follows
$$
R_{\mu_{t,q}}(x;q)=(1-q)\sum\limits_{n=0}^{\infty}q^{nx}h_n[\mu_{t,q};q].
$$
Therefore the derivative of $R_{w(.,t;q)}(x;q)$ with respect to the
variable $x$ can be written as
\begin{equation}\label{544}
\begin{split}
\frac{\partial R_{w(.,t;q)}(x;q) }{\partial
x}&=(1-q)\sum\limits_{n=1}^{\infty}h_n[\mu_{t,q};q]\left(n\ln
q\right)q^{nx}\\
&=-\frac{(1-q)}{\ln
q^{-1}}\sum\limits_{n=1}^{\infty}\frac{\partial}{\partial
t}\left(p_n[w(.,t;q);q]\right)\frac{q^{nx}}{n},
\end{split}
\end{equation}
where we have used (\ref{2Z}). The comparison of (\ref{543}) and
(\ref{544}) gives the third equation in the statement of the
theorem, equation (\ref{iii}).
\end{proof}

\section{$q$-auto-model solutions}
\subsection{Definition of $q$-auto-models}
\begin{defn}\label{DEFA1}
Let $q$ be a fixed real number taken from the open interval $(0,1)$.
Assume that $w(s;q)$ (considered as a function of the variable $s$)
is an element of $\D[a,b]$.
 Assume further
that the subgraph $\D_w[a,b]$ of $w(s;q)$ is of unit area. A
continual tableau $w(s,t;q)$ defined in terms of $w(s;q)$ by
equation
\begin{equation}\label{AM}
w(s,t;q)=\sqrt{t}\;w(\frac{s}{\sqrt{t}};q^{\sqrt{t}}),\;\; t>0,
\end{equation}
is called a \textit{$q$-auto-model.}
\end{defn}
\subsection{A definition of the $q$-deformation of the limiting diagram}
\begin{defn}
Let $R_{\Omega(.;q)}(x;q)$ be the $q$-deformation of the
$R$-function of a continual diagram $\Omega(.;q)$, see definition
\ref{DEFQDEFRFUNCTION}. If $R_{\Omega(.;q)}$ satisfies the equation
\begin{equation}\label{OmegaOmega}
R_{\Omega(.;q)}\left(1-q^{x-\frac{\ln
q^{-1}}{1-q}R_{\Omega(.;q)}}\right)=1-q,
\end{equation}
then $\Omega(.;q)$ is referred to as the $q$-deformation of the
limiting diagram $\Omega(s)$ defined by equation
(\ref{OOOOMMMMEEEGA}).
\end{defn}
\begin{rem}
If $q$ in equation (\ref{OmegaOmega}) approaches 1, then
(\ref{OmegaOmega}) is reduced to equation
$R_{\Omega}(x-R_{\Omega})=1$. The solution of this equation
vanishing at $x\rightarrow +\infty$ is the $R$-function of the
diagram $\Omega(s)$ defined by equation (\ref{OOOOMMMMEEEGA}).
\end{rem}
\subsection{The $q$-auto-model solution as the $q$-deformation of the limiting diagram}
\begin{thm}\label{THEOREM5222}
Let $w(s;q)$ be an arbitrary diagram of unit area, and
$w(s,t;q)=\sqrt{t}w\left(\frac{s}{\sqrt{t}}; q^{\sqrt{t}}\right)$ be
the corresponding $q$-auto-model. If the charge $\sigma(s,t;q)$ of
$w(s,t;q)$ satisfies equation (\ref{i}), then $w(s;q)=\Omega(s;q)$.
\end{thm}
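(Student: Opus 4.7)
The strategy is to pass from equation (\ref{i}) to its equivalent moment form (\ref{ii}) via Theorem \ref{theorem541}, and then exploit the fact that under the $q$-auto-model ansatz the moment equation collapses to a triangular ODE system whose unique solution can be identified with the moment sequence of $\Omega(.;q)$. First, for the auto-model $w(s,t;q)=\sqrt{t}\,w(s/\sqrt{t};q^{\sqrt{t}})$, the change of variable $u=s/\sqrt{t}$ in the defining formula (\ref{531}) of $p_n[\,\cdot\,;q]$ yields the key identity
\[
p_n[w(.,t;q);q] \;=\; p_n[w(.;q^{\sqrt{t}});q^{\sqrt{t}}] \;=:\; \phi_n(p), \qquad p:=q^{\sqrt{t}}.
\]
Thus the auto-model moments depend on $t$ and $q$ only through $p$. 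Differentiating the LHS of (\ref{ii}) in $t$ and using $dp/dt=(p\ln p)/(2t)$ together with $\ln^2 q^{-1}=(\ln p)^2/t$ on the RHS, and finally introducing the new ``time'' $\varsigma:=(\ln p)^2$, the dynamic equation (\ref{ii}) reduces to
\[
\frac{d\phi_n(\varsigma)}{d\varsigma} \;=\; n^2 \sum_{|\lambda|=n}\prod_{k=1}^{m(\lambda)}\frac{\phi_k^{r_k}(\varsigma)}{k^{r_k}\,r_k!}, \qquad n=1,2,\ldots.
\]

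The initial condition $\phi_n(0)=1$ is read off directly from the defining formula $\phi_n(p)=1-n\ln p^{-1}\int_a^b p^{-ns}\,\sigma'(s;p)\,ds$: the integral is uniformly bounded as $p\to 1$ (since $|\sigma'(s;p)|\leq 1$ and the support is contained in $[a,b]$), while the prefactor $\ln p^{-1}$ tends to zero. The unit-area hypothesis on the one-parameter family $\{w(.;p)\}$ enters here as the regularity needed to guarantee that this family degenerates to the empty diagram as $p\to 1$, so that the ODE system indeed holds up to and including $\varsigma=0$.

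The ODE system is triangular, i.e.\ the right-hand side of the $n$-th equation depends only on $\phi_1,\ldots,\phi_n$, so Picard iteration gives a unique analytic solution $\{\phi_n(\varsigma)\}_{n\geq 1}$ on $[0,\infty)$. By Claim~3 of the main theorem of the Introduction, the sequence $\{p_n[\Omega(.;p);p]\}_{n\geq 1}$ satisfies exactly the same ODE system with exactly the same initial conditions, so uniqueness forces $\phi_n(\varsigma)=p_n[\Omega(.;p);p]$ for every $n$ and every $p\in(0,1)$. The $p$-moments of the Rayleigh measure of $w(.;p)$ therefore coincide with those of $\Omega(.;p)$, and Theorem~\ref{PRKREIN} (the $q$-deformed Markov--Krein correspondence, under which the $q$-moments of the Rayleigh measure determine the diagram uniquely) allows us to conclude $w(.;q)=\Omega(.;q)$.

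The main subtle point is verifying the initial condition with enough regularity to justify applying ODE uniqueness at $\varsigma=0$; this requires mild control on the family $\{\sigma'(\cdot;p)\}$ as $p\to 1$, which follows from the unit-area normalization. An alternative but technically heavier route is to substitute the auto-model ansatz directly into (\ref{iii}) to obtain the reduced PDE (\ref{difmalq}), solve by the method of characteristics (the first integrals being $r/(1-q)$ and $\ln q^{-1}\bigl(x-\tfrac{\ln q^{-1}}{1-q}r\bigr)$), and pin down the remaining arbitrary function of one variable from the boundary behaviour $r(x;q)\to 1-q$ as $x\to\infty$ together with the unit area; this is conceptually attractive but the ODE route above is far cleaner because the triangular uniqueness is immediate.
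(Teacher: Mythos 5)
Your opening moves are sound, and the first of them coincides with the paper's: the identity $p_n[w(.,t;q);q]=p_n[w(.;q^{\sqrt t});q^{\sqrt t}]$ is exactly how the paper's proof begins. Your reduction of the moment equation (\ref{ii}) to the autonomous triangular system $\frac{d\phi_n}{d\varsigma}=n^2\sum_{|\lambda|=n}\prod_{k}\phi_k^{r_k}/(k^{r_k}r_k!)$ in the variable $\varsigma=(\ln p)^2$, $p=q^{\sqrt t}$, also checks out (it is the change of variables of Lemma \ref{lemma711} read along the auto-model ansatz), as do the initial condition $\phi_n\to 1$ as $p\to 1$ and the uniqueness argument by triangularity.

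The gap is in the final identification. You match the unique ODE solution with the moment sequence of $\Omega(.;q)$ by citing Claim 3 of the introductory theorem. But $\Omega(.;q)$ is \emph{defined} (equation (\ref{OmegaOmega})) as the diagram whose $q$-deformed $R$-function satisfies $R\bigl(1-q^{x-\frac{\ln q^{-1}}{1-q}R}\bigr)=1-q$, and Claim 3 --- the assertion that the $q$-moments of the Rayleigh measure of \emph{that} diagram are $y_n(\ln^2q)$ --- is established in the paper only as a consequence of Corollary \ref{CORMOMENTS} together with the common-asymptotics theorem, whose proof in turn reuses the computation from the proof of Theorem \ref{THEOREM5222}. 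Invoking Claim 3 here is therefore circular. What your argument genuinely proves is a uniqueness statement: any two $q$-auto-model solutions of (\ref{i}) have the same moments and hence coincide. It does not yet show that this common solution is the diagram characterized by (\ref{OmegaOmega}). To close the gap you must independently derive the functional equation from the moment data --- equivalently, pass to the PDE form (\ref{iii}), substitute the ansatz, and solve the resulting quasi-linear equation by characteristics, fixing the arbitrary function from the boundary behaviour $R\to 1-q$ as $x\to+\infty$ (which encodes that the transition measure is a probability measure). That is exactly the "technically heavier" alternative you relegate to a closing remark; in the paper's logical order it is not an alternative but the indispensable step, being the only place where the specific equation defining $\Omega(.;q)$ is produced. (A minor additional point: the unit-area hypothesis is not really what gives $\phi_n(p)\to1$ --- boundedness of $\sigma'$ and compact support already do --- rather it normalizes the growth so that $\partial_t\sigma$ is a probability density, which is what feeds the boundary condition above.)
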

\begin{proof}
It is easy to check that the moments $p_1[w(.,t;q);q],
p_2[w(.,t;q);q], \ldots$ of the $q$-auto-model $w(s,t;q)$  coincide
with the moments $p_1[w(.;q^{\sqrt{t}});q^{\sqrt{t}}],
p_2[w(.;q^{\sqrt{t}});q^{\sqrt{t}}], \ldots$ of the diagram
$w(\frac{s}{\sqrt{t}},q^{\sqrt{t}})$:
\begin{equation}
p_n[w(.,t;q);q]=p_n[w(.;q^{\sqrt{t}});q^{\sqrt{t}}]. \nonumber
\end{equation}
Indeed, we have
\begin{equation}
\begin{split}
p_n[w(.,t;q);q]&=-n\ln
q^{-1}\int\limits_a^bq^{-ns}\frac{\partial}{\partial
s}\sigma(s,t;q)ds+1\\
&=-n\ln q^{-1}\int\limits_a^bq^{-ns}\frac{\partial}{\partial
s}\left[\frac{1}{2}\left(w(s,t;q)-|s|\right)\right]ds+1\\
&=-n\ln q^{-1}\int\limits_a^bq^{-ns}\frac{\partial}{\partial
s}\left[\frac{1}{2}\left(\sqrt{t}\cdot
w(\frac{s}{\sqrt{t}};q^{\sqrt{t}})-\sqrt{t}\cdot|\frac{s}{\sqrt{t}}|\right)\right]ds+1\\
&=-n\ln\left[q^{\sqrt{t}}\right]^{-1}\int\limits_{a/\sqrt{t}}^{b/\sqrt{t}}\left[q^{\sqrt{t}}\right]^{-nu}\frac{\partial}{\partial
u}\left[\frac{1}{2}\left(
w(u;q^{\sqrt{t}})-|u|\right)\right]du+1\\
&=p_n[w(.;q^{\sqrt{t}});q^{\sqrt{t}}].
\end{split}
\nonumber
\end{equation}
This enables us to express the function $R_{w(.,t;q)}(x;q)$ which
corresponds to the $q$-auto-model $w(.,t;q)$
 in terms of the function
$R_{w(.;q^{\sqrt{t}})}(x;q^{\sqrt{t}})$ which corresponds to the
diagram $w(u,q^{\sqrt{t}})$:
\begin{equation}\label{ZZZ}
\begin{split}
R_{w(.,t;q)}(x;q)&=(1-q)\exp\left[\sum\limits_{n=1}^{\infty}\frac{p_n[w(.,t;q);q]q^{nx}}{n}\right]\\
&=(1-q)\exp\left[\sum\limits_{n=1}^{\infty}\frac{p_n[w(.;q^{\sqrt{t}});q^{\sqrt{t}}]q^{n\left(\frac{x}{\sqrt{t}}\right)\sqrt{t}}}{n}\right]\\
&=\frac{1-q}{1-q^{\sqrt{t}}}\;R_{w(.;q^{\sqrt{t}})}(\frac{x}{\sqrt{t}};q^{\sqrt{t}}).
\end{split}
\end{equation}
Introduce new variables
$$
u=\frac{x}{\sqrt{t}},\;\; Q=q^{\sqrt{t}}.
$$
By equation (\ref{ZZZ}) we have
\begin{equation}
R_{w(.,t;q)}(x;q)=\frac{1-q}{1-Q}R(u;Q), \;\mbox{where}\;\;
R(u;Q)=R_{w(.;q^{\sqrt{t}})}(\frac{x}{\sqrt{t}};q^{\sqrt{t}}).
\nonumber
\end{equation}
The differential equation for the function $R_{w(.,t;q)}(x;q)$ (the
third equation in theorem \ref{theorem541}) leads to the following
partial differential equation for the function $R(u;Q)$:
\begin{equation}
\frac{\partial}{\partial u}\left[R^2(u;Q)\right]-\frac{1-Q}{\ln
Q^{-1}}u\frac{\partial R(u;Q)}{\partial
u}-QR(u;Q)-Q(1-Q)\frac{\partial R(u;Q)}{\partial Q}=0. \nonumber
\end{equation}
Set $R(u;Q)=\frac{1-Q}{\ln Q^{-1}}\mathrm{r}(u;Q)$, and introduce a
real parameter $\varrho$, $\varrho>0$, by the relation
$Q=\exp\left(-\varrho\right)$.  Then $\mathrm{r}(u;\varrho)$
satisfies the following partial quasi-linear differential equation
\begin{equation}\label{629}
2\mathrm{r}\frac{\partial}{\partial
u}\mathrm{r}-u\frac{\partial}{\partial
u}\mathrm{r}+\varrho\frac{\partial}{\partial\varrho}\mathrm{r}=\mathrm{r}.
\end{equation}
 Equation (\ref{629}) is a quasi-linear
partial differential equation in two variables, and can be solved by
the method of characteristics. Namely, for the partial differential
equation (\ref{629}) the characteristic equations are:
\begin{equation}\label{6211}
\frac{du}{ds}=2\mathrm{r}-u,\;\;\frac{d\varrho}{ds}=\varrho,\;\;\frac{d\mathrm{r}}{ds}=\mathrm{r}.
\end{equation}
Equations (\ref{6211}) can be rearranged to two ordinary
differential equations:
\begin{equation}\label{6212}
\frac{d\mathrm{r}}{\mathrm{r}}=\frac{d\varrho}{\varrho},
\end{equation}
\begin{equation}\label{6213}
\frac{du}{2\mathrm{r}-u}=\frac{d\varrho}{\varrho}.
\end{equation}
The integration of the first equation above gives
$\mathrm{r}=c_1\varrho$. Inserting this into (\ref{6213}) we obtain:
\begin{equation}
\frac{du}{2c_1\varrho-u}=\frac{d\varrho}{\varrho},
\;\;\mbox{or}\;\;\frac{du}{d\varrho}=2c_1-\frac{u}{\varrho}.
 \nonumber
\end{equation}
Integrating the last equation we find
$$
u=c_1\varrho+\frac{c_2}{\varrho}.
$$
Our first integrals, therefore, are
$c_1=f(u,\varrho,\mathrm{r})=\frac{\mathrm{r}}{\varrho}$ and
$c_2=g(u,\varrho,\mathrm{r})=\varrho(u-\mathrm{r})$. The general
solution is found by setting $f=F(g)$, which leads to the relation
\begin{equation}\label{rgeneral}
 \frac{\mathrm{r}}{\varrho}=F(\varrho(u-\mathrm{r})),
\end{equation}
where $F$ is an arbitrary function. Observe that
$$
\mathrm{r}(u;\varrho)=
\frac{\varrho}{1-\exp(-\varrho)}R_{w(.;e^{-\varrho})}(u;e^{-\varrho})=\frac{\varrho}{1-\exp(-\varrho)}R_{\mu_{e^{-\varrho}}}(u;e^{-\varrho}),
$$
where $\mu_{e^{-\varrho}}$ is the $q$-transition measure of the
diagram $w(.;e^{-\varrho})$. It follows that
$$
\mathrm{r}(u;\varrho)=\varrho\int\limits_{a}^{b}\frac{\mu_{e^{-\varrho}}(ds)}{1-e^{-\varrho(u-s)}},
$$
and from this equation we conclude that $\mathrm{r}(u,\varrho)$
approaches to $\varrho(1-e^{-\varrho u})^{-1}$ as
$u\rightarrow\infty$. This enables us to determine  the function $F$
in (\ref{rgeneral}) explicitly:
$$
F(x)=\frac{1}{1-\exp{(- x)}}.
$$
Consequently, the function $\mathrm{r}(u,\varrho)$ satisfies the
equation
$$
\mathrm{r}=\varrho\left(1-e^{-\varrho(u-\mathrm{r})}\right)^{-1}.
$$
If we rewrite this equation in terms of
$R_{w(.,e^{-\varrho})}(u;e^{-\varrho})$, and replace $\varrho$ by
$\ln q^{-1}$, and $u$ by $x$, we obtain
\begin{equation}
R_{w(.;q)}\left(1-q^{x-\frac{\ln q^{-1}}{1-q}R_{w(.;q)}}\right)=1-q.
\nonumber
\end{equation}
Therefore the functions $R_{w(.;q)}(x;q)$ and $R_{\Omega(.;q)}(x;q)$
coincide for all admissible values of $x$ and $q$. This implies
$w(s;q)=\Omega(s;q)$.
\end{proof}

\section{The asymptotics of the general solution}
\subsection{The large $t$ asymptotics of the functions
$p_n[w(.,t;q)]$ and $h_n[w(.,t;q)]$}
 Assume that the charge
$\sigma(s,t;q)$ of a diagram $w(s,t;q)$ satisfies equation (\ref{i})
of theorem \ref{theorem541}. Then the $q$-transition measure
$\mu_{t,q}$ of the diagram $w(s,t;q)$ is
$$
\mu_{t,q}(ds)=\frac{\partial\sigma(s,t;q)}{\partial t}ds.
$$
Let $\left\{h_n[\mu_{t,q};q]\right\}_{n=1}^{\infty}$ be the
$q$-moments of $\mu_{t,q}$ (see equation (\ref{momentyyyyyyyy})),
and for every $n=1,2,\ldots $ set
$h_n[w(.,t;q);q]:=h_n[\mu_{t,q};q]$. Recall that the functions
$p_n[w(.,t;q);q]$ are defined by equation (\ref{531}).
\begin{lem}\label{lemma711}
There exist functions $\{\check{p}_n[q]\}_{n=1}^{\infty}$ and
$\{\check{h}_n[q]\}_{n=1}^{\infty}$, which are independent on $t$,
such that
\begin{equation}\label{713}
p_n\left[w(.,t;q^{\frac{1}{\sqrt{t}}});
q^{\frac{1}{\sqrt{t}}}\right]=\check{p}_n[q]+o(t^{-1/2}),
\end{equation}
\begin{equation}\label{714}
h_n\left[w(.,t;q^{\frac{1}{\sqrt{t}}});
q^{\frac{1}{\sqrt{t}}}\right]=\check{h}_n[q]+o(t^{-1/2}),
\end{equation}
as $t\rightarrow\infty$.
\end{lem}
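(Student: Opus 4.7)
The plan is to introduce a rescaled time variable that turns the nonlinear system (\ref{ii}) into an autonomous one, then to observe that the substitution $Q=q^{1/\sqrt{t}}$ freezes the rescaled time, and finally to use continuity of ODE flows to pass to the limit $t\to\infty$. For each fixed $Q\in(0,1)$, set $\varsigma:=t\ln^{2}Q^{-1}$ and define $P_{n}(\varsigma;Q):=p_{n}[w(\cdot,\varsigma/\ln^{2}Q^{-1};Q);Q]$. The dynamic equation (\ref{ii}) rewrites as the $Q$-independent system
\begin{equation}
\frac{dP_{n}}{d\varsigma}=n^{2}\sum_{|\lambda|=n}\prod_{k=1}^{m(\lambda)}\frac{P_{k}^{r_{k}}}{k^{r_{k}}r_{k}!},\qquad n=1,2,\ldots, \nonumber
\end{equation}
which is triangular (the equation for $P_{n}$ involves only $P_{1},\ldots,P_{n}$) and has polynomial right-hand side, hence generates a locally Lipschitz flow. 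The identity $\ln(q^{1/\sqrt{t}})^{-1}=\ln q^{-1}/\sqrt{t}$ gives $\varsigma=\ln^{2}q^{-1}$ independently of $t$ under the substitution $Q=q^{1/\sqrt{t}}$, so the problem reduces to controlling $P_{n}(\ln^{2}q^{-1};q^{1/\sqrt{t}})$ as $t\to\infty$.

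Now let $\{y_{n}(\varsigma)\}$ denote the solution of the above system with the canonical initial conditions $y_{n}(0)=1$ (the sequence appearing in Claim 3 of the main theorem), and set $\check{p}_{n}[q]:=y_{n}(\ln^{2}q^{-1})$. As $t\to\infty$, the rescaled initial time $\varsigma_{0}(Q)=t_{0}\ln^{2}Q^{-1}$ tends to $0$, and Taylor expanding $Q^{-ns}=1+ns\ln Q^{-1}+O((\ln Q^{-1})^{2})$ in the defining formula (\ref{pmoments}) shows that $p_{n}[w(\cdot,t_{0};Q);Q]\to 1$ as $Q\to 1^{-}$. Local Lipschitz dependence of the ODE flow on the pair (initial time, initial data) converts this into
\begin{equation}
P_{n}(\ln^{2}q^{-1};q^{1/\sqrt{t}})=\check{p}_{n}[q]+o(t^{-1/2}), \nonumber
\end{equation}
which is (\ref{713}). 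The statement (\ref{714}) follows essentially for free: by Proposition \ref{PROPOSITIONtq} each $h_{n}$ is a universal polynomial in $p_{1},\ldots,p_{n}$ (the change-of-generators polynomials between $\textbf{h}_{n}$ and $\textbf{p}_{n}$ in the algebra $\Lambda$), so defining $\check{h}_{n}[q]$ by the same polynomial applied to $\check{p}_{1}[q],\ldots,\check{p}_{n}[q]$ preserves the $o(t^{-1/2})$ rate.

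The main obstacle is extracting the sharp $o(t^{-1/2})$ rate from the continuity argument: a naive Taylor expansion of $Q^{-ns}$ with $Q=q^{1/\sqrt{t}}$ produces an initial-value error of order $\ln Q^{-1}=O(t^{-1/2})$ rather than $o(t^{-1/2})$. This leading term reflects the non-vanishing of $\int_{a}^{b}\sigma'(s,t_{0};Q)\,ds$ for an off-centered diagram (essentially the center $s_{0}$ of $w(\cdot,t_{0};Q)$). One must either assume the initial diagram is centered at the origin or arrange this by translation; once the $O(\ln Q^{-1})$ contribution vanishes, only the quadratic remainder $O((\ln Q^{-1})^{2})=O(1/t)\subset o(t^{-1/2})$ survives, closing the argument.
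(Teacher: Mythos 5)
Your proposal is correct and follows essentially the same route as the paper: the same rescaling $\varsigma=t\ln^{2}q^{-1}$ turning (\ref{ii}) into the autonomous triangular system (\ref{SHEST}), the same observation that the substitution $q\mapsto q^{1/\sqrt{t}}$ pins the terminal time at $\varsigma=\ln^{2}q^{-1}$, the same identification $\check{p}_{n}[q]=y_{n}(\ln^{2}q^{-1})$ with $y_{n}(0)=1$, and the same polynomial transfer from $p_{n}$ to $h_{n}$; the only difference is that the paper integrates the triangular system explicitly and reads the asymptotics off the closed-form solutions, whereas you invoke Lipschitz dependence of the flow on initial data, which is equivalent here since each equation is linear in its top variable given the lower ones. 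Your closing paragraph usefully makes explicit the point the paper compresses into ``(\ref{716}) follows immediately from (\ref{531})'', namely that the vanishing of $\int_{a}^{b}\sigma'(s)\,ds$ for a centered diagram is what upgrades the naive $O(t^{-1/2})$ error in the initial data to the required $o(t^{-1/2})$.
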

\begin{proof}
Comparing the righthand sides of equations (\ref{2Z}) and (\ref{3Z})
we obtain the following system of differential equations
\begin{equation}\label{715}
\frac{\partial}{\partial t}p_n\left[w(.,t;q);q\right]=n^2\ln^2
q^{-1}
\left\{\sum\limits_{|\lambda|=n}\prod\limits_{k=1}^{m(\lambda)}\frac{p_k^{r_k}[w(.,t;q);q]}{k^{r_k}r_k!}\right\},
\end{equation}
where $n=1,2,\ldots $, $\lambda=\left(1^{r_1},2^{r_2},\ldots,
m^{r_m}\right)$, and  $m=m(\lambda)$. Setting
$$
\varsigma
:=t\ln^2q^{-1},\;\;\mbox{and}\;\;y_n(\varsigma):=p_n[w(.,t;q);q],
$$
we obtain differential equations for functions
$\{y_n(\varsigma)\}_{n=1}^{\infty}$
\begin{equation}\label{SHEST}
\frac{dy_n(\varsigma)}{d\varsigma}
=n^2\left\{\sum\limits_{|\lambda|=n}\prod\limits_{k=1}^{m(\lambda)}\frac{y_k^{r_k}(\varsigma)}{k^{r_k}r_k!}\right\},\;\;
n=1,2,\ldots .
\end{equation}
The first equations of the system above are
$$
\dot{y}_1=y_1
$$
$$
\dot{y}_2=2y_1^2+2y_2
$$
$$
\dot{y}_3=\frac{3}{2}y_1^3+\frac{9}{2}y_2y_1+3y_3
$$
$$
\dot{y}_4=\frac{2}{3}y_1^4+4y_2y_1^2+\frac{16}{3}y_3y_1+2y_2^2+4y_4
$$
$$
\dot{y}_5=\frac{5}{24}y_1^5+\frac{25}{12}y_2y_1^3+\frac{25}{6}y_3y_1^2+\frac{25}{8}y_1y_2^2+\frac{25}{4}y_4y_1+\frac{25}{6}y_2y_3+5y_5
$$
$$
\vdots
$$
Successively solving these equations we find
$$
y_1(\varsigma)=y_1(0)e^{\varsigma}
$$
$$
y_2(\varsigma)=[y_2(0)+2y_1^2(0)\varsigma]e^{2\varsigma}
$$
$$
y_3(\varsigma)=\left[y_3(0)+\frac{3}{2}y_1(0)\left[3y_2(0)+y_1^2(0)\right]\varsigma+\frac{9}{2}y_1^3(0)\varsigma^2\right]e^{3\varsigma}
$$
\begin{equation}
\begin{split}
y_4(\varsigma)=\biggl[y_4(0)+\left[\frac{2}{3}y_1^4(0)+4y_1^2(0)y_2(0)+\frac{16}{3}y_1(0)y_3(0)+2y_2^2(0)\right]\varsigma\\
+\left[16y_1^2(0)y_2(0)+8y_1^4(0)\right]\varsigma^2+\frac{32}{3}y_1^4(0)\varsigma^3
\biggr]e^{4\varsigma}
\end{split}
\nonumber
\end{equation}
$$
\vdots
$$
Generally, $y_n(\varsigma)$ is a polynomial in $\varsigma$ of degree
$n-1$ multiplied by $e^{n\varsigma}$, and the coefficients of this
polynomial are homogeneous.

Returning to the functions
$\left\{p_n\left[w(.,t;q);q\right]\right\}_{n=1,2,\ldots}$ we obtain
$$
p_1[w(.,t;q);q]=p_1[w(.,t=0;q);q]\;e^{t\ln^2q^{-1}}
$$
$$
p_2[w(.,t;q);q]=\left[p_2[w(.,t=0;q);q]+2p_1^2[w(.,t=0;q);q](t\ln^2q^{-1})\right]\;e^{2t\ln^2q^{-1}}
$$
\begin{equation}
\begin{split}
p_3[w(.,t;q);q]&=\biggl[p_3[w(.,t=0;q);q]\\
&+\frac{3}{2}p_1[w(.,t=0;q);q]\left[3p_2[w(.,t=0;q);q]+p_1^2[w(.,t=0;q);q]\right](t\ln^2q^{-1})\\
&+\frac{9}{2}p_1^3[w(.,t=0;q);q](t\ln^2q^{-1})^2\biggr]\;e^{3t\ln^2q^{-1}}
\end{split}
\nonumber
\end{equation}

\begin{equation}
\begin{split}
p_4[w(.,t;q);q]&=\biggl[p_4[w(.,t=0;q);q]\\
&+\biggl[\frac{2}{3}p_1^4[w(.,t=0;q);q]+4p_1^2[w(.,t=0;q);q]p_2[w(.,t=0;q);q]\\
&+\frac{16}{3}p_1[w(.,t=0;q);q]p_3[w(.,t=0;q);q]+2p^2_2[w(.,t=0;q);q]\biggr](t\ln^2q^{-1})\\
&+\left[16p_1^2[w(.,t=0;q);q]p_2[w(.,t=0;q);q]+8p_1^4[w(.,t=0;q);q]\right](t\ln^2q^{-1})^2\\
&+
\frac{32}{3}p_1^4[w(.,t=0;q);q](t\ln^2q^{-1})^3\biggr]\;e^{4t\ln^2q^{-1}}
\end{split}
\nonumber
\end{equation}
$$
\vdots
$$
Now it is clear that if
\begin{equation}\label{716}
p_n[w(.,t=0;q^{\frac{1}{\sqrt{t}}});q^{\frac{1}{\sqrt{t}}}]=1+o\left(\frac{1}{\sqrt{t}}\right)
\end{equation}
as $t\rightarrow\infty$, then (\ref{713}) holds. But (\ref{716})
follows immediately from (\ref{531}). Since the relation between
functions $\{p_n[w(.,t;q)]\}_{n=1}^{\infty}$ and
$\{h_n[w(.,t;q)]\}_{n=1}^{\infty}$ is homogeneous, equation
(\ref{714}) holds as well.
\end{proof}
\begin{cor}\label{CORMOMENTS}
Let $\{y_n(\varsigma)\}_{n=1,2,\ldots}$ be the solution of the
system of differential equations given by equation (\ref{SHEST})
which satisfies the initial conditions $y_n(0)=1,\;n=1,2,\ldots$
Then the limiting values $\check{p}_n[q]$ in equation (\ref{713})
are given by
$$
\check{p}_n[q]=y_n[\ln^2q],\;\;n=1,2,\ldots,
$$
 and the limiting values $\check{h}_n[q]$ in equation
(\ref{714}) are given by
$$
\check{h}_n[q]=\left\{\sum\limits_{|\lambda|=n}\prod\limits_{k=1}^{m(\lambda)}\frac{y_k^{r_k}[\ln^2q]}{k^{r_k}r_k!}\right\}.
$$
where $n=1,2,\ldots $, $\lambda=\left(1^{r_1},2^{r_2},\ldots,
m^{r_m}\right)$, and  $m=m(\lambda)$.
\end{cor}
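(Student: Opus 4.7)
The plan is essentially to re-read the proof of Lemma \ref{lemma711} through the lens of a parametrized ODE system, and to identify the correct initial-value problem singled out by the substitution $q\mapsto q^{1/\sqrt{t}}$. For any fixed $q'\in(0,1)$, the functions $p_n[w(.,t;q');q']$ satisfy the system (\ref{715}) with parameter $q'$, and the change of variable $\varsigma=t\ln^2(q')^{-1}$, $y_n^{(q')}(\varsigma)=p_n[w(.,t;q');q']$, turns (\ref{715}) into the $q'$-independent system (\ref{SHEST}). Thus $\{y_n^{(q')}(\varsigma)\}_{n\geq 1}$ is the unique solution of (\ref{SHEST}) subject to the initial data $y_n^{(q')}(0)=p_n[w(.,0;q');q']$.

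Now I would substitute $q'=q^{1/\sqrt{t}}$. The crucial observation is that under this substitution the evaluation time
\[
\varsigma \;=\; t\ln^2(q^{-1/\sqrt{t}}) \;=\; \ln^2 q
\]
becomes \emph{independent} of $t$. Hence
\[
p_n\!\left[w(.,t;q^{1/\sqrt{t}});q^{1/\sqrt{t}}\right] \;=\; y_n^{(q^{1/\sqrt{t}})}(\ln^2 q),
\]
and the large-$t$ limit of the left-hand side is controlled by (i) the convergence of the initial data $y_n^{(q^{1/\sqrt{t}})}(0)$ as $t\to\infty$, and (ii) the continuous dependence of solutions of (\ref{SHEST}) on initial conditions. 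For (i), formula (\ref{531}) with $q$ replaced by $q^{1/\sqrt{t}}$ contains the prefactor $n\ln q^{1/\sqrt{t}}=\frac{n}{\sqrt{t}}\ln q$, which gives precisely the estimate (\ref{716}), so $y_n^{(q^{1/\sqrt{t}})}(0)=1+o(t^{-1/2})$. Point (ii) follows from the polynomial right-hand side of (\ref{SHEST}), which is locally Lipschitz in $(y_1,\dots,y_n)$, so the flow on any bounded time interval depends continuously on initial data. Combining these yields
\[
\check{p}_n[q] \;=\; \lim_{t\to\infty} y_n^{(q^{1/\sqrt{t}})}(\ln^2 q) \;=\; y_n(\ln^2 q),
\]
where $\{y_n\}$ is the solution of (\ref{SHEST}) with $y_n(0)=1$ for all $n$.

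For the assertion about $\check{h}_n[q]$, I would invoke Proposition \ref{PROPOSITIONtq}, equation (\ref{3Z}): the relation between $h_n$ and $p_n$ coincides, at every fixed $t$ and $q'$, with the classical identity expressing the complete homogeneous symmetric functions in terms of power sums. Applying this identity to $q'=q^{1/\sqrt{t}}$ and passing to the limit $t\to\infty$, the polynomial expression in the $p_k$'s is continuous and hence converges term-by-term, giving
\[
\check{h}_n[q] \;=\; \sum_{|\lambda|=n}\prod_{k=1}^{m(\lambda)}\frac{y_k^{r_k}[\ln^2 q]}{k^{r_k}r_k!},
\]
which is the second claim.

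The only point requiring genuine care, and hence the main (mild) obstacle, is the interplay between the $t$-dependent parameter $q^{1/\sqrt{t}}$ and the reparametrized time $\varsigma$: one must verify that the cancellation producing a $t$-independent $\varsigma=\ln^2 q$ is consistent with the fact that the ODE (\ref{SHEST}) itself no longer contains $q'$. Once this bookkeeping is done, both claims follow routinely from the uniqueness of solutions to (\ref{SHEST}) and continuous dependence on initial data, together with the symmetric-function identity (\ref{3Z}).
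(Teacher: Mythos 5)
Your proposal is correct and follows essentially the same route as the paper: reparametrize time as $\varsigma=t\ln^2 q^{-1}$ so that the substitution $q\mapsto q^{1/\sqrt{t}}$ freezes the evaluation time at $\varsigma=\ln^2 q$, note that the initial data tend to $1$ by (\ref{531}), and conclude via the dependence of solutions of (\ref{SHEST}) on initial conditions, with the $\check{h}_n$ formula coming from (\ref{3Z}). The only (harmless) difference is that you justify the passage to the limit by general continuous dependence of the flow on initial data, whereas the paper reads it off from the explicit polynomial-times-exponential formulas for $y_n(\varsigma)$ obtained in the proof of Lemma \ref{lemma711}.
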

\begin{proof}
In order to obtain the values of $\check{p}_n[q]$ we need to compute
the limits
$\underset{t\rightarrow\infty}{\lim}p_n[w(.,t;q^{\frac{1}{\sqrt{t}}});
q^{\frac{1}{\sqrt{t}}}]$. Since
$\underset{t\rightarrow\infty}{\lim}p_n[w(.,t=0;q^{\frac{1}{\sqrt{t}}});
q^{\frac{1}{\sqrt{t}}}]=1$, $n=1,2,\ldots$ it is not hard to
conclude from the proof of the lemma above that
$\underset{t\rightarrow\infty}{\lim}p_n[w(.,t;q^{\frac{1}{\sqrt{t}}});
q^{\frac{1}{\sqrt{t}}}]=y_n(\ln^2q)$, $n=1,2,\ldots$, where
$\{y_n(\varsigma)\}_{n=1,2,\ldots}$ is the solution of the system of
differential equations  (\ref{SHEST}) which satisfies the initial
conditions $y_n(0)=1,\;n=1,2,\ldots$.
\end{proof}

\subsection{The common asymptotics of solutions}
\begin{thm}
Assume that the charge  $\sigma(s,t;q)$ of a tableau $w(s,t;q)$
satisfies (\ref{i}). Then
\begin{equation}
\underset{t\rightarrow\infty}{\lim}\frac{1}{\sqrt{t}}w(s\sqrt{t},t;q^{\frac{1}{\sqrt{t}}})=\Omega(s;q)
\nonumber
\end{equation}
uniformly in $s$ and $q$.
\end{thm}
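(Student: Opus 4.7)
The strategy is to recognize the rescaled family
$\tilde w_t(s;q) := \frac{1}{\sqrt t}\,w(s\sqrt t,\,t;\,q^{1/\sqrt t})$
as a continual diagram in $\D[a,b]$ whose $q$-moments have already been controlled by Lemma~\ref{lemma711}, identify the limiting moments with those of $\Omega(\cdot;q)$, and then promote moment convergence to uniform convergence using the compactness of $\D[a,b]$ together with the injectivity of the $q$-deformed Markov--Krein correspondence.

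\textbf{Step 1: Match of moments under rescaling.} First I would compute the charge of $\tilde w_t(\cdot;q)$, namely $\tilde\sigma_t(s;q)=\frac{1}{\sqrt t}\sigma(s\sqrt t,t;q^{1/\sqrt t})$, so that $\partial_s\tilde\sigma_t(s;q)=(\partial_s\sigma)(s\sqrt t,t;q^{1/\sqrt t})$. Substituting this into the definition (\ref{531}) and performing the change of variables $u=s\sqrt t$, the factor $\ln q^{-1}/\sqrt t$ becomes $\ln(q^{1/\sqrt t})^{-1}$, and one obtains the identity
\[
p_n\bigl[\tilde w_t(\cdot;q);q\bigr]=p_n\bigl[w(\cdot,t;q^{1/\sqrt t});q^{1/\sqrt t}\bigr],
\]
so the $q$-moments of the rescaled diagrams are exactly the quantities analyzed in Lemma~\ref{lemma711}.

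\textbf{Step 2: Identification of the limit.} By Lemma~\ref{lemma711} these moments converge as $t\to\infty$ to the values $\check p_n[q]$, and Corollary~\ref{CORMOMENTS} expresses $\check p_n[q]=y_n[\ln^2 q]$ in terms of the solution of the ODE system (\ref{SHEST}) with $y_n(0)=1$. But by Claim 3 of the main theorem (which is established as part of Theorem~\ref{THEOREM5222} together with the same corollary applied to the $q$-auto-model built from $\Omega(\cdot;q)$), these values coincide with the $q$-moments $p_n[\Omega(\cdot;q);q]$ of the limiting diagram. Hence for every $n\ge 1$,
\[
\lim_{t\to\infty}p_n\bigl[\tilde w_t(\cdot;q);q\bigr]=p_n\bigl[\Omega(\cdot;q);q\bigr].
\]

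\textbf{Step 3: From moment convergence to uniform convergence.} The family $\{\tilde w_t(\cdot;q)\}_{t}$ lies in $\D[a,b]$, whose elements are $1$-Lipschitz and agree with $|s-s_0|$ outside $[a,b]$; by Arzel\`a--Ascoli this is compact in the uniform topology. If $\tilde w_{t_k}(\cdot;q)\to w^{*}(\cdot;q)$ uniformly along some subsequence, then $\partial_s\tilde\sigma_{t_k}\to\partial_s\sigma^{*}$ weakly as signed measures on $[a,b]$, so by (\ref{531}) the $q$-moments of $w^{*}(\cdot;q)$ coincide with $p_n[\Omega(\cdot;q);q]$. By the uniqueness part of Theorem~\ref{PRKREIN} and the bijection $w\longleftrightarrow\mu_q$ of Theorem~\ref{RRRTTT}, these moments determine the Rayleigh measure, and hence the continual diagram, uniquely; so $w^{*}(\cdot;q)=\Omega(\cdot;q)$. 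A standard subsequence argument upgrades this to convergence of the whole family.

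\textbf{Main obstacle.} The serious point is promoting the argument to give uniformity jointly in $s\in\R$ and $q\in(0,1)$. Uniformity in $s$ follows readily from equicontinuity, but uniformity in $q$ requires that the $o(t^{-1/2})$ error in Lemma~\ref{lemma711} can be controlled uniformly for $q$ in any closed subinterval of $(0,1)$, and that the convergence is continuous in the boundary behavior as $q\uparrow 1$. Concretely one must revisit the explicit polynomial-exponential structure of the solutions $y_n(\varsigma)$ produced in the proof of Lemma~\ref{lemma711}, together with the bound $p_n[w(\cdot,t=0;q^{1/\sqrt t});q^{1/\sqrt t}]=1+o(t^{-1/2})$ coming from (\ref{531}), and verify that the hidden constants are locally uniform in $q$; the compactness argument in $\D[a,b]$ then delivers uniform-in-$(s,q)$ convergence via a standard diagonal extraction. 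This uniform control of the ODE trajectories in the parameter $q$ is the step where the real work sits.
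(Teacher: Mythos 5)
Your Steps 1 and 3 track the paper's proof closely: the paper likewise introduces the normalized tableau $W(s,t;q)=\frac{1}{\sqrt t}\,w(s\sqrt t,t;q^{1/\sqrt t})$, verifies the moment identity $p_n[W(.,t;q);q]=p_n[w(.,t;q^{1/\sqrt t});q^{1/\sqrt t}]$ by the same change of variables, and invokes Lemma \ref{lemma711} to get $p_n[W(.,t;q);q]=\check p_n+o(t^{-1/2})$. The gap is in your Step 2, the identification of the limit. ``Claim 3 of the main theorem'' is not an available input here: in the paper it is a \emph{consequence} of the very theorem you are proving (it is obtained by combining Corollary \ref{CORMOMENTS} with the conclusion that the limiting function $\check R$ coincides with $R_{\Omega(.;q)}$). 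Your proposed independent derivation --- apply Corollary \ref{CORMOMENTS} to the $q$-auto-model built from $\Omega(\cdot;q)$ --- requires knowing that this auto-model actually \emph{satisfies} the dynamic equation (\ref{i}). Theorem \ref{THEOREM5222} gives only the converse (uniqueness) direction: \emph{if} an auto-model solves (\ref{i}), \emph{then} its profile is $\Omega(\cdot;q)$. Existence of such a solution is nowhere established, so as written your identification is either circular or rests on an unproved existence statement.

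The paper closes this loop differently, and you would need something like it. After establishing $\lim_{t}R_{W(.,t;q)}(x;q)=\check R(x;q)$ and $\lim_{t}\bigl[t\,\partial_t R_{W(.,t;q)}(x;q)\bigr]=0$ from the moment asymptotics, it rewrites the $q$-Burgers equation (\ref{iii}) in the rescaled variables $u=x/\sqrt t$, $Q=q^{\sqrt t}$, obtaining a PDE whose right-hand side is $2t\frac{1-Q}{\ln Q^{-1}}\partial_t R$; writing $R=\check R+\overline R$ and letting $t\to\infty$ kills that term and shows that $\check R$ satisfies the \emph{stationary} quasi-linear equation already solved by characteristics in the proof of Theorem \ref{THEOREM5222}. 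That computation characterizes the solution with the correct behavior at infinity as the root of $r\bigl(1-q^{x-\frac{\ln q^{-1}}{1-q}r}\bigr)=1-q$, i.e.\ as $R_{\Omega(.;q)}$, with no need for an a priori auto-model solution. Your Step 3 (compactness in $\D[a,b]$ plus uniqueness in the $q$-Markov--Krein correspondence) is a reasonable way to pass from moment convergence to uniform convergence in $s$ --- the paper is terser here --- and your closing remark correctly flags that uniformity in $q$ requires tracking the error in Lemma \ref{lemma711} uniformly in $q$, a point which neither you nor the paper carries out in detail.
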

\begin{proof}
 Define the normalized tableau
\begin{equation}\label{WTAB}
W(s,t;q)=\frac{1}{\sqrt{t}}\;w\left(s\sqrt{t},t;q^{\frac{1}{\sqrt{t}}}\right),\;\;
t>0.
\end{equation}
If the tableau $w(s,t;q)$ is a family of continual diagrams from
$\D[a,b]$ then the normalized tableau $W(s,t;q)$  is the family of
continual diagrams from $\D[a\sqrt{t},b\sqrt{t}]$. The functions
$p_n[W(s,t;q);q]$  can be expressed as
\begin{equation}
p_n[W(.,t;q);q]=-n\ln
q^{-1}\int\limits_{a\sqrt{t}}^{b\sqrt{t}}q^{-ns}\frac{\partial\Xi(s,t;q)}{\partial
s}ds+1, \nonumber
\end{equation}
where $\Xi(s,t;q)$ is the charge of the normalized diagram
$W(s,t;q)$. Let us express $\Xi(s,t;q)$ in terms of the charge
$\sigma(s,t;q)$ of the initial diagram $w(s,t;q)$:
\begin{equation}
\begin{split}
\Xi(s,t;q)&=\frac{1}{2}\left(W(s,t;q)-|s|\right)\\
&=\frac{1}{2}\left(\frac{1}{\sqrt{t}}\;w(s\sqrt{t},t;q^{\frac{1}{\sqrt{t}}})-\frac{1}{\sqrt{t}}|s\sqrt{t}|\right)\\
&=\frac{1}{\sqrt{t}}\;\sigma(s\sqrt{t},t;q^{\frac{1}{\sqrt{t}}}).
\end{split}
\nonumber
\end{equation}
Inserting this  into the integral for $p_n[W(.,t;q);q]$, and
changing the variables of the integration we obtain
\begin{equation}\label{RR}
p_n[W(.,t;q);q]=p_n[w(.,t;q^{\frac{1}{\sqrt{t}}});q^{\frac{1}{\sqrt{t}}}].
\end{equation}
By lemma \ref{lemma711} this implies the large $t$ asymptotic
relation
\begin{equation}
p_n[W(.,t;q);q]=\check{p}_n+o(t^{-1/2}),
\end{equation}
where $\check{p}_n$ are independent on $t$. Let the functions
$h_n[W(.,t;q);q]$ be defined in terms of the functions
$p_n[W(.,t;q);q]$ by the formula
\begin{equation}\label{HHP}
h_n[W(.,t;q);q]=\sum\limits_{|\lambda|=n}\prod\limits_{k=1}^{m(\lambda)}\frac{p_k^{r_k}[W(.,t;q);q]}{k^{r_k}r_k!},
\end{equation}
where $n=1,2,\ldots $, $\lambda=\left(1^{r_1},2^{r_2},\ldots,
m^{r_m}\right)$, and  $m=m(\lambda)$. From equation (\ref{HHP}) we
obtain the large $t$ asymptotic relation  for the functions
$h_n[W(.,t;q);q]$
\begin{equation}\label{Has}
h_n[W(.,t;q);q]=\check{h}_n+o(t^{-1/2}),
\end{equation}
where $\check{h}_n$ are independent on $t$. The sequences
$\left\{p_n[W(.,t;q);q]\right\}_{n=1,2,\ldots}$ and
$\left\{h_n[W(.,t;q);q]\right\}_{n=1,2,\ldots}$ are related with
each other as the sequences of the corresponding generators of the
algebra $\Lambda$ of symmetric functions,
$\left\{\textbf{p}_n\right\}_{n=1,2,\ldots}$ and
$\left\{\textbf{h}_n\right\}_{n=1,2,\ldots}$. Therefore the function
$R_{W(.,t;q)}(x;q)$ can be represented in two ways:
$$
R_{W(.,t;q)}(x;q)=(1-q)\;\exp\left[\sum\limits_{n=1}^{\infty}\frac{p_n[W(.,t;q);q]q^{nx}}{n}\right],
$$
and
$$
R_{W(.,t;q)}(x;q)=(1-q)\;\left(1+\sum\limits_{n=1}^{\infty}h_n[W(.,t;q);q]q^{nx}\right).
$$
>From the equation just written above, and from asymptotic relation
(\ref{Has}) we  conclude that
$$
\underset{t\rightarrow\infty}{\lim}R_{W(.,t;q)}(x;q)=\check{R}(x;q),\;\;\mbox{and}\;\;
\underset{t\rightarrow\infty}{\lim}\left[t\;\frac{\partial
R_{W(.,t;q)}(x;q)}{\partial t}\right]=0,
$$
where $\check{R}(x;q)$ is defined in terms of $\check{h}_n$ by
\begin{equation}\label{CH}
\check{R}(x;q)=(1-q)\;\left(1+\sum\limits_{n=1}^{\infty}\check{h}_nq^{nx}\right).
\end{equation}
Equation (\ref{RR}) also implies the relation
\begin{equation}\label{R612}
R_{w(.,t;q)}(x;q)=\frac{1-q}{1-q^{\sqrt{t}}}\;R_{W(.,t;q^{\sqrt{t}})}(\frac{x}{\sqrt{t}};q^{\sqrt{t}}).
\end{equation}
Indeed,
\begin{equation}
\begin{split}
R_{W(.,t;q)}(x;q)&=(1-q)\;\exp\left[\sum\limits_{n=1}^{\infty}\frac{p_n[W(.,t;q);q]q^{nx}}{n}\right]\\
&=(1-q)\;\exp\left[\sum\limits_{n=1}^{\infty}\frac{p_n[w(.,t;q^{\frac{1}{\sqrt{t}}});q^{\frac{1}{\sqrt{t}}}](q^{\frac{1}{\sqrt{t}}})^{n\sqrt{t}x}}{n}\right]\\
&=\frac{1-q}{1-q^{\sqrt{t}}}\left[R_{w(.,t;q^{\frac{1}{\sqrt{t}}})}(\sqrt{t}
x;q^{\frac{1}{\sqrt{t}}})\right],
\end{split}
\nonumber
\end{equation}
which is clearly equivalent to equation (\ref{R612}). The third
equation in theorem \ref{theorem541} (which is a partial
differential equation for $R_{w(.,t;q)}(x;q)$) and a change of
variables result in the partial differential equation
\begin{equation}\label{1000000}
\begin{split}
\frac{\partial}{\partial u}\left[R^2(u,t;Q)\right]-&\frac{1-Q}{\ln
Q^{-1}}u\frac{\partial R(u,t;Q)}{\partial
u}-QR(u,t;Q)-Q(1-Q)\frac{\partial R(u,t;Q)}{\partial Q}\\
&= 2t\frac{1-Q}{\ln Q^{-1}}\frac{\partial R(u,t;Q)}{\partial t},
\end{split}
\end{equation}
where $R(u,t;Q):=R_{W(.,t;Q)}(u;Q)$. Let us write the function
$R(u,t;Q)$ in the form
\begin{equation}\label{RSUM}
R(u,t;Q)=\check{R}(u;Q)+\overline{R(u,t;Q)},
\end{equation}
where $\check{R}(u;Q)$ is defined in terms of $\check{h}_n$ by
(\ref{CH}). Then
$$
\underset{t\rightarrow\infty}{\lim}\overline{R(u,t;Q)}=0,\;\;\mbox{and}\;\;
\underset{t\rightarrow\infty}{\lim}\left[t\;\frac{\partial
\overline{R(u,t;Q)}}{\partial t}\right]=0.
$$
Substituting (\ref{RSUM}) into partial differential equation
(\ref{1000000}) and taking the large $t$ limit  we obtain
$$
\frac{\partial}{\partial
u}\left[\check{R}^2(u;Q)\right]-\frac{1-Q}{\ln
Q^{-1}}u\frac{\partial\check{R}(u;Q)}{\partial
u}-Q\check{R}(u;Q)-Q(1-Q)\frac{\partial\check{R}(u;Q)}{\partial Q}=0
$$
The partial differential equation just written above is precisely
that which have appeared previously in the proof of theorem
\ref{THEOREM5222}. The proof of theorem \ref{THEOREM5222} shows that
the solution $\check{R}(u;Q)$ must satisfy the same equation as the
function $R_{\Omega(.;q)}(x;q)$ (equation (\ref{OmegaOmega})).
Therefore $\check{R}(u;Q)$ coincides with $R_{\Omega(.;q)}(x;q)$,
and the limiting moments $\check{p}_n$, $\check{h}_n$ coincide with
the corresponding moments of the diagram $\Omega(s;q)$. Thus the
normalized diagram $W(s,t;q)$ converges uniformly to $\Omega(s;q)$
as $t\rightarrow\infty$.
\end{proof}
\section{Growth of rectangular diagrams}
The aim of this section is to  relate the growth of the diagrams in
the $q$-analog of the Plancherel process, and equation (\ref{iii})
more directly.
\subsection{The definition of the growth}
Let $\left\{x_k\right\}_{k=1}^{m+1}$ and
$\left\{y_k\right\}_{k=1}^{m}$ be the points of minima and maxima of
a rectangular  diagram $w$ correspondingly, see Figure 1. Consider a
one-parameter deformation $w_t$ of $w$ by attaching a tiny square of
area $\mu_k(w;q) t$ above each minimum $x_k$. Such deformation is
referred to as \textit{the growth of a rectangular diagram.} The
interlacing sequences associated with the deformed diagram $w_t$ are
$$
x_t=\left\{x_1,y_1,\ldots,x_{m},y_{m},x_{m+1}\right\},
$$
and
$$
y_t=\left\{x_1-\sqrt{\mu_1t},x_1+\sqrt{\mu_1t},x_2-\sqrt{\mu_2t},x_2+\sqrt{\mu_2t},\ldots,
x_{m+1}-\sqrt{\mu_{m+1}t},x_{m+1}+\sqrt{\mu_{m+1}t}\right\}.
$$
Thus $x_t$ and $y_t$ defined above  are the sequences of the minima
and of the maxima of the deformed diagram $w_t$.
\subsection{The differential equation for the infinitesimal
growth}
\begin{prop}
If a rectangular diagram $w_t$ grows according to the transition
probabilities $\mu_k(w;q)$ defined by equation (\ref{WEIGHTSQQ}),
then infinitesimally (for small $t$) the function $R_{w_t}(x;q)$
evolves according to differential equation (\ref{iii}).
\end{prop}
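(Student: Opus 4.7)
My approach is to apply the product formula for the $R$-function of a rectangular diagram (from the proof of Proposition~\ref{ProposRECT}) to the deformed diagram $w_t$, Taylor expand in $t$, and compare the leading $t$-derivative with $\partial_x R_w$ computed from the partial-fraction form~(\ref{PROOO1}); the identity that emerges is precisely equation~(\ref{iii}) evaluated at $t=0^+$.

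First I would record the partial-fraction form
$$R_w(x;q) = (1-q)\sum_{k=1}^{m+1}\frac{\mu_k(w;q)}{1-q^{x-x_k}}$$
and differentiate it to obtain
$$\frac{\partial R_w(x;q)}{\partial x} = (1-q)\ln q\sum_{k=1}^{m+1}\frac{\mu_k(w;q)\,q^{x-x_k}}{(1-q^{x-x_k})^2}.$$
The interlacing sequence of $w_t$ consists of the $2(m+1)$ new minima at $x_k\pm\sqrt{\mu_k t}$ together with the $2m+1$ new maxima $\{x_k\}_{k=1}^{m+1}\cup\{y_i\}_{i=1}^m$ (the old minima $x_k$ become the tops of the newly attached diamonds, and the old $y_i$'s survive), so the rectangular-diagram formula yields
$$R_{w_t}(x;q) = (1-q)\frac{\prod_{k=1}^{m+1}(1-q^{x-x_k})\prod_{i=1}^m(1-q^{x-y_i})}{\prod_{k=1}^{m+1}(1-q^{x-x_k+\sqrt{\mu_k t}})(1-q^{x-x_k-\sqrt{\mu_k t}})}.$$

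The key computation is the symmetric pair expansion
$$(1-q^{x-x_k+\sqrt{\mu_k t}})(1-q^{x-x_k-\sqrt{\mu_k t}}) = 1 - 2q^{x-x_k}\cosh(\sqrt{\mu_k t}\ln q) + q^{2(x-x_k)} = (1-q^{x-x_k})^2 - q^{x-x_k}\mu_k t\ln^2 q + O(t^2),$$
in which the $O(\sqrt{t})$ contributions cancel by parity. Using this, the leading factors $(1-q^{x-x_k})^2$ in the denominator combine with the numerator to reconstruct $R_w(x;q)$, and I obtain
$$R_{w_t}(x;q) = R_w(x;q)\biggl[1 + t\ln^2 q\sum_{k=1}^{m+1}\frac{\mu_k(w;q)\,q^{x-x_k}}{(1-q^{x-x_k})^2} + O(t^2)\biggr].$$
Differentiating at $t=0^+$ and eliminating the common sum using the formula for $\partial_x R_w$ above gives
$$\frac{\partial R_w(x;q)}{\partial x} + \frac{1-q}{\ln q^{-1}}\,R_w^{-1}(x;q)\left.\frac{\partial R_{w_t}(x;q)}{\partial t}\right|_{t=0^+} = 0,$$
which is equation~(\ref{iii}) at the initial time.

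What I expect to be the subtle point, rather than a real obstacle, is the cancellation of the $O(\sqrt{t})$ terms in the pair expansion. Were the new maxima placed asymmetrically around each $x_k$, $R_{w_t}$ would depend on $t$ through $\sqrt{t}$ and no smooth first-order $t$-dynamics could match equation~(\ref{iii}) at all. It is precisely the symmetric placement $x_k\pm\sqrt{\mu_k t}$ forced by the growth rule (a diamond of area $\mu_k t$ centred above the minimum $x_k$) that makes the $\cosh$ series the relevant one and eliminates the $\sqrt{t}$ dependence; the remainder of the argument is direct algebra on the resulting product expansion.
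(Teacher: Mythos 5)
Your proposal is correct and follows essentially the same route as the paper: the product formula for $R_{w_t}$ over the new interlacing sequence, the symmetric pair expansion in which the $O(\sqrt{t})$ terms cancel and the coefficient $q^{x-x_k}\mu_k\ln^2 q^{-1}/(1-q^{x-x_k})^2$ emerges, and the comparison with $\partial_x R_w$. The only (harmless) difference is that you compute $\partial_x R_w$ directly from the partial-fraction expansion of $R_w$ with weights $\mu_k$, whereas the paper differentiates $R_{\mu_{q,t}}$ with the split weights $\nu_{2k-1},\nu_{2k}$ and then uses $\nu_{2k-1}+\nu_{2k}=\mu_k$ at $t=0$; your shortcut is, if anything, slightly cleaner.
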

\begin{proof}
Let $\mu_q$ be the $q$-transition measure of $w$. The
$q$-deformation of the $R$-function of the diagram $w$,
$R_{w}(x;q)$, and the $q$-deformation of the $R$-function of the
$q$-transition measure $\mu_q$ of $w$, $R_{\mu_q}(x;q)$, are given
by
$$
R_{w}(x;q)=(1-q)\;\frac{\prod_{i=1}^m\left(1-q^{x-y_i}\right)}{\prod_{i=1}^{m+1}\left(1-q^{x-x_i}\right)},
\;\;\mbox{and}\;\;
R_{\mu_q}(x;q)=(1-q)\;\sum\limits_{k=1}^{m+1}\frac{\mu_k(w;q)}{1-q^{x-x_k}}.
$$
We have
$$
R_{w}(x;q)=R_{\mu_q}(x;q).
$$
The $q$-deformation of the $R$-function of the deformed diagram
$w_t$, $R_{w_t}(x;q)$, and the $q$-deformation of the $R$-function
of the $q$-transition measure $\mu_{q,t}$ of $w_t$,
$R_{\mu_{q,t}}(x;q)$, are given by
\begin{equation}\label{43}
R_{\mu_{q;t}}(x;q)=(1-q)\sum\limits_{j=1}^{2(m+1)}\frac{\nu_j(w_t;q)}{1-q^{x-x_t^{(j)}}},
\end{equation}
\begin{equation}\label{A54}
R_{w_t}(x;q)=(1-q)\frac{\prod_{i=1}^{m+1}\left(1-q^{x-x_i}\right)
\prod_{i=1}^{m}\left(1-q^{x-y_i}\right)}{\prod_{i=1}^{m+1}\left(1-q^{x-x_i-\sqrt{\mu_it}}\right)
\left(1-q^{x-x_i+\sqrt{\mu_it}}\right)},
\end{equation}
where $x_t^{(j)}$, $j=1,\ldots, 2(m+1)$ are the elements of the set
$x_t$. The deformation preserves the equality between the
$q$-deformation of the $R$-function of the diagram, and the
$q$-deformation of the $R$-function of the corresponding
$q$-transition measure, i.e.
$$
R_{\mu_{q;t}}(x;q)=R_{w_t}(x;q).
$$
Since  $R_{\mu_{q;t}}(x;q)$ at $t=0$ must coincide with
$R_{\mu_{q}}(x;q)$ the following relations between transition
probabilities must be true
\begin{equation}\label{nununu}
\nu_1+\nu_2=\mu_1,\;\nu_3+\nu_4=\mu_2,\ldots,\nu_{2m+1}+\nu_{2m+2}=\mu_{m+1}.
\end{equation}

The functions $R_{w_t}(x;q)$ and $R_{w}(x;q)$ are related to each
other by the expression
$$
R_{w_t}(x;q)=R_{w}(x;q)
\prod\limits_{k=1}^{m+1}\left(\frac{1-q^{x-x_k-\sqrt{\mu_kt}}}{1-q^{x-x_k}}\right)^{-1}
\left(\frac{1-q^{x-x_k+\sqrt{\mu_kt}}}{1-q^{x-x_k}}\right)^{-1},
$$
which immediately follows from (\ref{43}) and (\ref{A54}). Now we
have
\begin{equation}
\begin{split}
\left(\frac{1-q^{x-x_k-\sqrt{\mu_kt}}}{1-q^{x-x_k}}\right)
\cdot\left(\frac{1-q^{x-x_k+\sqrt{\mu_kt}}}{1-q^{x-x_k}}\right)
&=\frac{1+q^{2(x-x_k)}-q^{(x-x_k)}\left(e^{\sqrt{\mu_k t}\ln
q^{-1}}+e^{-\sqrt{\mu_k t}\ln
q^{-1}}\right)}{\left(1-q^{x-x_k}\right)^2}\\
&=\frac{1+q^{2(x-x_k)}-2q^{(x-x_k)}\left(1+\frac{\mu_k t}{2}\ln^2 q^{-1}+o(t)\right)}{\left(1-q^{x-x_k}\right)^2}\\
&=1-\frac{q^{(x-x_k)}\mu_k\ln^2q^{-1}}{\left(1-q^{x-x_k}\right)^2}t+o(t).
\end{split}
\nonumber
\end{equation}
Using this we can rewrite the relation between the functions
$R_{w_t}(x;q)$ and $R_{w}(x;q)$ as follows
\begin{equation}
R_{w_t}(x;q)=R_{w}(x;q)\left[1+\sum\limits_{k=1}^{m+1}\frac{q^{x-x_k}\mu_k\ln^2q^{-1}}{\left(1-q^{x-x_k}\right)^2}
t+o(t)\right], \nonumber
\end{equation}
which clearly implies
\begin{equation}\label{A59}
\frac{\partial R_{w_t}(x;q)}{\partial
t}\biggl\vert_{t=0}=\sum\limits_{k=1}^{m+1}\frac{q^{x-x_k}\mu_k\ln^2q^{-1}}{\left(1-q^{x-x_k}\right)^2}
R_{w}(x;q).
\end{equation}
Differentiate the function  $R_{w_t}(x;q)$  with respect to $x$ and
obtain
\begin{equation}
\begin{split}
&\frac{\partial R_{w_t}(x;q) }{\partial x}=\frac{\partial
R_{\mu_{q,t}}(x;q) }{\partial
x}=(1-q)\sum\limits_{k=1}^{2(m+1)}\nu_k\frac{\partial}{\partial
x}\frac{1}{1-q^{x-x_t^{(k)}}}\\
&=(1-q)\sum\limits_{k=1}^{m+1}\nu_{2k-1}\frac{\partial}{\partial
x}\frac{1}{1-q^{x-x_k-\sqrt{\mu_kt}}}+(1-q)\sum\limits_{k=1}^{m+1}\nu_{2k}\frac{\partial}{\partial
x}\frac{1}{1-q^{x-x_k+\sqrt{\mu_kt}}}\\
&=-(1-q)\ln
q^{-1}\sum\limits_{k=1}^{m+1}\nu_{2k-1}\frac{q^{x-x_k-\sqrt{\mu_k
t}}}{\left(1-q^{x-x_k-\sqrt{\mu_kt}}\right)^2}-(1-q)\ln
q^{-1}\sum\limits_{k=1}^{m+1}\nu_{2k-1}\frac{q^{x-x_k-\sqrt{\mu_k
t}}}{\left(1-q^{x-x_k-\sqrt{\mu_kt}}\right)^2}.
\end{split}
\nonumber
\end{equation}
In particular, from the expression just written above it follows
that
\begin{equation}\label{A510}
\begin{split}
\frac{\partial R_{w_t}(x;q)}{\partial x}\biggl\vert_{t=0}&=-(1-q)\ln
q^{-1}\sum\limits_{k=1}^{m+1}\left(\nu_{2k-1}+\nu_{2k}\right)\frac{q^{x-x_k}}{(1-q^{x-x_k})^2}\\
&=-(1-q)\ln
q^{-1}\sum\limits_{k=1}^{m+1}\mu_k\frac{q^{x-x_k}}{(1-q^{x-x_k})^2},
\end{split}
\end{equation}
where in the last equation we have used relation (\ref{nununu})
between transition probabilities. We compare the righthand sides of
equations (\ref{A510}) and (\ref{A59}), and obtain differential
equation (\ref{iii}).
\end{proof}

\end{document}